\mathchardef\mhyphen="2D
\newcommand{\ignore}[1]{}
\newcommand{\Rmnum}[1]{\uppercase\expandafter{\romannumeral #1\relax}}
\newcommand{\mb}{\mathbf}
\DeclareMathAlphabet{\mathsfsl}{OT1}{cmss}{m}{sl}
\newcommand{\dif}{\,\mathrm{d}}
\newcommand{\ba}{\bm\alpha}
\newcommand{\bx}{\bm\xi}
\newcommand{\ve}{\varepsilon}
\newcommand{\PreserveBackslash}[1]{\let\temp=\\#1\let\\=\temp}
\newcolumntype{C}[1]{>{\PreserveBackslash\centering}p{#1}}
\newcolumntype{R}[1]{>{\PreserveBackslash\raggedleft}p{#1}}
\newcolumntype{L}[1]{>{\PreserveBackslash\raggedright}p{#1}}
\numberwithin{equation}{section}
\newtheorem{thm}{Theorem}[section]
\newtheorem{cor}{Corollary}
\theoremstyle{definition}
\newtheorem{defn}[thm]{Definition}
\newtheorem{rem}[thm]{Remark}
\newcommand{\black}[1]{{\color{black}#1}}
\acrodef{QoI}{quantity of interest}
\acrodef{UQ}{uncertainty quantification}
\acrodef{MC}{Monte Carlo}
\acrodef{GP}{Gaussian Process}
\acrodef{gPC}{generalized polynomial chaos}
\acrodef{aPC}{arbitrary polynomial chaos}
\acrodef{amdP}{\emph{polynomials for arbitrary mutually dependent}}
\acrodef{DSRAR}{\emph{Data-driven Sparsity-enhancing Rotation for Arbitrary Randomness}} 
\acrodef{CS}{compressed sensing}
\acrodef{SASA}{solvent-accessible surface area}
\acrodef{PDE}{partial differential equation}
\acrodef{RIC}{restricted isometry constant}
\acrodef{RIP}{restricted isometry property}
\acrodef{PCA}{principal component analysis}
\acrodef{MD}{molecular dynamics}
\acrodef{APBS}{Adaptive Poisson-Boltzmann Solver}
\acrodef{PDF}{probability density function}
\acrodef{GAFF}{General AMBER Force Field}
\newcommand{\dashline}{\raisebox{0pt}{\tikz{\draw[-,dashed,line width = 1.0pt](0.,1mm) -- (10.0mm,1mm)}}}
\newcommand{\rectangleopen}{\raisebox{0pt}{\tikz{\draw[solid,line width = 1.0pt](4.mm,0)rectangle (5.8mm,1.8mm);
  }}}
\newcommand{\circleopen}{\raisebox{0pt}{\tikz{\draw[solid, line width = 1.0pt](5mm,0) circle[radius=1.0mm];
}}}
\newcommand{\triangleopen}{\raisebox{0pt}{\tikz{\draw[solid, line width = 1.0pt](4mm,0.15mm) -- (6mm,0.15mm) -- (5mm,1.95mm) -- cycle;
}}}
\newcommand{\diamondopen}{\raisebox{0pt}{
  \tikz{\draw[solid, line width = 1.0pt](-1mm,0mm) -- (0mm,1mm) -- (1mm,0mm) -- (0mm,-1mm) -- cycle;
  }}}
\newcommand{\rectanglesolidline}{\raisebox{0pt}{\tikz{\draw[solid,fill, line width = 1.0pt](4.mm,0)rectangle (5.8mm,1.8mm);
  \draw[-,solid,line width = 1.0pt](0.,1mm) -- (10.0mm,1mm)}}}
\newcommand{\rectangledashline}{\raisebox{0pt}{\tikz{\draw[solid,fill, line width = 1.0pt](4.mm,0)rectangle (5.8mm,1.8mm);
  \draw[-,dashed,line width = 1.0pt](0.,1mm) -- (10.0mm,1mm)}}}
\newcommand{\trianglesolidline}{\raisebox{0pt}{\tikz{\draw[solid,fill, line width = 1.0pt](4mm,0.15mm) -- (6mm,0.15mm) -- (5mm,1.95mm) -- cycle;
  \draw[-,solid,line width = 1.0pt](0.,1.05mm) -- (10.0mm,1.05mm)}}}
\newcommand{\triangledashline}{\raisebox{0pt}{\tikz{\draw[solid,fill, line width = 1.0pt](4mm,0.15mm) -- (6mm,0.15mm) -- (5mm,1.95mm) -- cycle;
  \draw[-,dashed,line width = 1.0pt](0.,1.05mm) -- (10.0mm,1.05mm)}}}
\newcommand{\triangledashdotline}{\raisebox{0pt}{\tikz{\draw[solid,fill, line width = 1.0pt](4mm,0.15mm) -- (6mm,0.15mm) -- (5mm,1.95mm) -- cycle;
  \draw[-,dashdotted,line width = 1.0pt](0.,1.05mm) -- (10.0mm,1.05mm)}}}
\newcommand{\downtrianglesolidline}{\raisebox{0pt}{\tikz{\draw[solid,fill, line width = 1.0pt](4mm,1.95mm) -- (6mm,1.95mm) -- (5mm,0.15mm) -- cycle;
  \draw[-,solid,line width = 1.0pt](0.,1.05mm) -- (10.0mm,1.05mm)}}}
\newcommand{\downtriangledashline}{\raisebox{0pt}{\tikz{\draw[solid,fill, line width = 1.0pt](4mm,1.95mm) -- (6mm,1.95mm) -- (5mm,0.15mm) -- cycle;
  \draw[-,dashed,line width = 1.0pt](0.,1.05mm) -- (10.0mm,1.05mm)}}}
\newcommand{\diamondsolidline}{\raisebox{0pt}{
  \tikz{ \draw[solid,fill, line width = 1.0pt](4mm,0mm) -- (5mm,1mm) -- (6mm,0mm) -- (5mm,-1mm) -- cycle;
  \draw[-,solid,line width = 1.0pt](0.,0mm) -- (10.0mm,0mm)}}}
\newcommand{\diamonddashline}{\raisebox{0pt}{
  \tikz{ \draw[solid,fill, line width = 1.0pt](4mm,0mm) -- (5mm,1mm) -- (6mm,0mm) -- (5mm,-1mm) -- cycle;
  \draw[-,dashed,line width = 1.0pt](0.,0mm) -- (10.0mm,0mm)}}}
\newcommand{\diamonddashdotline}{\raisebox{0pt}{
  \tikz{ \draw[solid,fill, line width = 1.0pt](4mm,0mm) -- (5mm,1mm) -- (6mm,0mm) -- (5mm,-1mm) -- cycle;
  \draw[-,dashdotted,line width = 1.0pt](0.,0mm) -- (10.0mm,0mm)}}}
\begin{document}

 \preprint{}

\title{A data-driven framework for sparsity-enhanced 
surrogates with arbitrary mutually dependent randomness}

\author{Huan Lei}
\email{huan.lei@pnnl.gov }
\affiliation{Pacific Northwest National Laboratory, Richland, WA 99352.}%
\author{Jing Li}
\thanks{The first two authors contributed equally}
\affiliation{Pacific Northwest National Laboratory, Richland, WA 99352.}%
\author{Peiyuan Gao}
\affiliation{Pacific Northwest National Laboratory, Richland, WA 99352.}%
\author{Panos Stinis}
\affiliation{Pacific Northwest National Laboratory, Richland, WA 99352.}%
\affiliation{Department of Applied Mathematics, University of Washington, Seattle, WA 98195.}%
\author{Nathan A.\ Baker}
\email{nathan.baker@pnnl.gov }
\affiliation{Pacific Northwest National Laboratory, Richland, WA 99352.}%
\affiliation{Division of Applied Mathematics, Brown University, Providence, RI 02912.}%

\date{\today} 

\begin{abstract}
    The challenge of quantifying uncertainty propagation in real-world systems is 
rooted in the high-dimensionality of the stochastic input and the frequent lack 
of explicit knowledge of its probability distribution. Traditional approaches 
show limitations for such problems, especially when the size of the training data is limited.
To address these difficulties, we have developed a general framework of constructing 
surrogate models on spaces of stochastic input with arbitrary probability measure 
irrespective of \black{the mutual dependencies between individual components
of the random inputs} and the analytical form.
The present \acf*{DSRAR} framework 
includes a \black{data-driven construction of multivariate polynomial basis for arbitrary mutually
dependent probability measure} and a sparsity enhancement rotation procedure.
This sparsity enhancement method was initially proposed in our previous work \cite{Lei_Yang_MMS_2015} 
for Gaussian density distributions, which may not be feasible for non-Gaussian distributions due 
to the loss of orthogonality after the rotation. To remedy such difficulties, \black{we 
developed a new data-driven approach to construct orthonormal polynomials for \acf*{amdP} randomness,
ensuring the constructed basis} maintains the orthogonality/near-orthogonality 
with respect to the density of the rotated random vector, \black{where 
directly applying the regular polynomial chaos including arbitrary polynomial chaos (aPC) \cite{OLADYSHKIN2012} 
shows limitations due to the assumption of the mutual independence between 
the components of the random inputs.}   
The developed \acs*{DSRAR} framework leads to accurate recovery, with only limited training data, of a sparse representation 
of the target functions. 
The effectiveness of our method is demonstrated in challenging problems such as PDEs and realistic molecular systems 
\textcolor{black}{within high-dimensional conformational space ($O(10)$)} where the underlying density is implicitly represented by a \textcolor{black}{large collection of sample data}, as well as systems {with explicitly given} non-Gaussian probabilistic measures.

\end{abstract}
\maketitle


\section{Introduction}\label{sec:intro}

A fundamental problem in \ac{UQ} \cite{Saltelli_book_2008} is to calculate 
the statistical properties of a \ac{QoI} due to various sources of randomness, e.g., \black{
numerical simulations} subject to uncertain parameters, initial conditions and/or boundary 
conditions, as well as experimental measurements in the presence of material heterogeneity, thermal fluctuations. Such sources of uncertainty 
are usually characterized by high-dimensional random variables whose probability 
measures can be either discrete or continuous. In real-world systems, there are usually two crucial
challenges to accurately quantify the propagation of the randomness from the input to the system response.
The first challenge comes from the high-dimensionality of the random inputs. \black{
For such systems, limited computational resources often motivates further dimensionality reduction \cite{Bishop_2006}. 
However, it is often non-trivial to accurately transfer the high-dimensional random space into 
a low-dimensional random space.}
This results in the numerical intractability of quantifying the uncertainty of the \ac{QoI} from training data of limited size. 
The second challenge arises from frequent dependencies and arbitrary distribution of the random inputs. 
Typically, random inputs are \black{represented by random vectors with mutually independent components.}   
For realistic systems, the underlying distribution of the inputs can often involve dependencies that cannot be 
ignored \black{(e.g., see molecule systems in Ref. \cite{Laio_Parrinello_PNAS_2002} and Sec. \ref{sec:mole_example})}.
\black{Moreover, the input distribution could be even unknown} and thus we may only have access to it implicitly through a 
collection of samples. This creates further numerical obstacles in characterizing the random inputs 
as well as their effect on the system response. In the current work, we present a 
\ac{DSRAR} framework
for dealing with all of the aforementioned challenges. \black{While we focused on numerical experiments in the present study, the developed 
framework can be also applied to \ac{UQ} in experimental studies.}

In practice, a straightforward and robust approach is the \ac{MC} method, which involves collecting 
a large number of samples of the random inputs from their distribution, evaluating 
the \ac{QoI} at each sample point, and then obtaining the statistical properties (mean, variance, \black{sensitivity indices}, 
probability density function, probability of a certain event etc.) of the \ac{QoI}. Unfortunately, to 
get an accurate estimate, the \ac{MC} method requires a large number of simulations due to
its slow convergence rate \cite{Fishman96, Kucherenko_2015}. Furthermore, for large or complex systems, even a single 
instance of these simulations may require very large computational resources. Under such circumstances, 
the computational cost of \ac{MC} method can become extremely large. \textcolor{black}{Several approaches have been developed to 
alleviate such difficulties. 
For instance, sampling approaches such as multilevel-\ac{MC} \cite{Giles_2015,Heinrich_MLMC_2001,Pisaroni_Nobile_CMAME_2017}
and multifidelity-\ac{MC} \cite{Koutsourelakis_SISC_2009, Peherstorfer_Willcox_SISC_2016} have 
been designed to optimize the computational load when samples of 
the \ac{QoI} are available at hierarchical levels of accuracy;} sampling approaches like quasi-\ac{MC} 
\cite{Fox99,Niedeereiter92,NiederreiterHLZ98} and Latin Hypercube sampling 
\cite{Mckay_Beckman_Technometrics_1979,Stein_tech87,Loh_AS96}, have been designed to accelerate convergence. 
However, when the underlying distribution of the inputs is arbitrary and not explicitly given, 
\textcolor{black}{the latter two sampling strategies may lose their advantage if it 
is not straightforward to generate quasi-random sequences following the underlying distribution}. 

An alternative approach approximates the \ac{QoI} via constructing the surrogate model 
of the random inputs and then calculates the statistics of the \ac{QoI} 
analytically or numerically.
\black{
Among such approaches, the most popular are the Gaussian Process \cite{Sack_Welch_Stat_1989, 
Kennedy_Hagan_JRSS_2001, GP_book_Rasmussen_2006}, and the polynomial chaos 
expansion originally introduced by Wiener \cite{Wiener38}, applied to \ac{UQ} by Ghanem 
\cite{Ghanem_1991Spectralappro} and extended to the \ac{gPC} expansion by Xiu \cite{Xiu_2002Wiener}.
The \ac{GP} is a stochastic process which approximates the values of the \ac{QoI} at every finite sets of 
sample point as multivariate Gaussian random vectors. The flexibility of the mean and covariance functions 
enables \ac{GP} to characterize a wide range of function behavior with broad applications on \ac{UQ}
\cite{GP_book_Rasmussen_2006, Qian_Wu_2008, Williams_2006, Oakley_OHagan_Biometrika_2002, Lockwood_Anitescu_2012}.}
The \ac{gPC} expansion approximates the \ac{QoI} by a set of simple basis functions. 
It is known to be a \emph{mathematically optimal} approximation of the \ac{QoI} 
when the basis functions are chosen to be orthogonal with respect to the probability measure of the random inputs.
This approach has been demonstrated for diverse applications 
in \ac{UQ} \cite{XiuK_JCP03,GhanemMPW_CMAME05,Knio_CFD06,Sudret2008,LiXiu_JCP09, MarzoukXiu2009, Li2010,Li2018} 
due to its spectral convergence under certain situations. 
\black{In this study, we focus on the approach developed 
based on \ac{gPC} and we refer to previous publications \cite{Schobi_2015,Gratiet_2016,Owen_Challenor_SIAM_2017,Roy_Mocayd_2018} 
(and the references therein) for comparative studies of the two approaches. }

%
In principle, if the orthogonal polynomial type and the corresponding random 
variables are determined, both intrusive and non intrusive methods can be used 
to evaluate the coefficients of the expansion. For example, stochastic collocation, based 
on tensor products of one-dimensional quadrature rules, is often employed when 
dimensionality is small \cite{MathelinH_NASA03,XiuH_SISC05,Babuska2007}, \black{with the 
number of basis functions given by $(p+d)!/p!d!$, where $p$ is polynomial order and $d$ is the dimension}.
However, as the dimension increases, the number of quadrature points needed for the tensor 
product rule increases exponentially. \black{To mitigate this issue, sparse grid 
and adaptive 
collocation methods have been proposed to deal with moderate dimensionality 
\cite{XiuH_SISC05,nobile2008sparse,Ma2009,Foo2010,CONSTANTINE2012,jakemanG2013,Li2016}}. 
%
When the dimension of the random inputs is large, none of the above collocation 
methods is feasible. In the case of a limited number of available simulations and large dimensionality, \ac{CS} approaches have been used to construct sparse polynomial approximations of the \ac{QoI} \cite{Doostan_2011nonadapted, Yan_2012Sc, Rauhut_2012sparseLegen,mathelin_gallivan_2012,Yang_2013reweightedL1,Hampton_2015Cs,Peng_2016gradientL1,Yan_2017,Liu_2016QMCL1, Lei_Karniadakis_JCP_2017,ALEMAZKOOR2017,DIAZ2018,RAI2018}. 
\black{Finally, we note that \ac{gPC} (including extensions such as arbitrary polynomial
chaos \cite{OLADYSHKIN2012}), in its current form, can only handle
 random vector with 
independent identically distributed (i.i.d.) components in standard types (uniform, Gaussian, gamma, beta, etc.).
For other distributions, a pre-processing step is required to transform the original random variables into i.i.d.\ random variables of standard types.
In general, these transformations are highly nonlinear which result in the final \ac{QoI} function approximation 
to be a high-degree polynomial in order to maintain accuracy.}

The methods discussed above rely on the \emph{explicit} knowledge of the 
underlying probability measures and/or the assumption of mutual independence between
the components of the random inputs. However, such assumptions on the random inputs can be quite 
restrictive for realistic applications. One such example is the
\ac{UQ} for molecular system properties \acp{QoI} due to conformational 
fluctuations \cite{Huang_Protein_book_2005}. For such systems, the random inputs are the various conformational states 
(i.e., the instantaneous structure) of the molecule. The underlying distribution is 
determined by the free energy function of the system, which is essentially the multi-dimensional marginal density
distribution with respect to the (Boltzmann) distribution of the full Hamiltonian system.
Unfortunately, numerical evaluation of the free energy function is a well-known challenging
problem. Although various sampling strategies have been 
developed \cite{Kumar_Kollman_JCC_1992,Laio_Parrinello_PNAS_2002,Mar_Van_JCP_2008}, the
explicit free energy function is usually unknown for dimensions greater than $4$. 
In practice, the underlying density is only known implicitly through a large collection 
of the molecule conformational states obtained from experiments or simulated trajectories. 
Another commonly encountered example arises in our recent work \cite{Lei_Yang_MMS_2015} on constructing sparse 
representations of a  \ac{QoI} based on \ac{CS}. 
Inspired by the active subspace method \cite{ConstantineDW14}, we 
proposed a method to enhance the sparsity of polynomial expansion in terms of a 
new random vector via unitary rotation of the original random vector. For i.i.d.\ Gaussian 
random inputs, the new random vector retains the same distribution. However, for 
non-Gaussian random inputs, which are more realistic for applications, the new random vector
does not retain the mutual independence even if the original random vector elements are i.i.d.
%
%

For problems with non-Gaussian random inputs,
the traditional approach is to cast the available statistics 
into a family of standard distributions and then to apply the \ac{gPC} techniques discussed above.
Gaussian mixture models, due to their flexibility, are broadly employed to approximate 
the distribution of the data. 
With the distribution approximated, a \ac{gPC} expansion of the \ac{QoI} can be constructed for each Gaussian component.
The statistical properties of the \ac{QoI} are derived by combining the statistical properties of all components \cite{LIweixuan2015,Vittaldev2016}.
However, there are two drawbacks of the Gaussian mixture approach: (i) it lacks one-to-one correspondence between one instance of random inputs and the approximated function evaluation, (ii) it is difficult to determine an appropriate and accurate probability density approximation when the dimension is larger than one.
Copulas have been employed to treat dependent probabilistic models for surrogate construction in \cite{FEINBERG2015}.
Zabaras \cite{Zabaras_2014} has established a graph-based approach to factorize the joint distribution into a set of conditional distributions based on the dependence structure of the variables.
Alternatively, several studies have been devoted to constructing orthogonal
polynomial bases using the moments of the random variables. 
Orthogonal polynomial chaos for random vectors with independent components of
arbitrary measure
was proposed in \cite{OLADYSHKIN2012, WanK_SISC06, Witteveen_Bijl_2006, Zheng_2015MEPCM,YIN2018}. 
Ahlfeld investigated the quadrature rule of this \ac{aPC} and proposed 
a sparse quadrature rule for the integration which can facilitate the evaluation of the 
expansion coefficients \cite{AHLFELD2016}. However, those quadrature rules of arbitrary 
polynomial chaos again assume \black{ \emph{the components of the random inputs are mutually independent.}} 

In this paper, we develop a \black{general \ac{UQ} framework for constructing 
surrogate models via
\ac{DSRAR} \emph{irrespective of possible mutual dependencies 
between the random input components}.}
\black{This approach is different from the aforementioned studies based on 
polynomial chaos expansions and, therefore, can be particularly} 
useful for realistic systems where the input distributions
can be non-standard or unknown analytically. 
\black{
The key idea is a data-driven approach for basis construction, consisting
of multivariate orthonormal
\ac{amdP},}  
coupled with the previously developed rotation-based sparsity enhancement approach \cite{Lei_Yang_MMS_2015}. This can be 
viewed a special case of the present method when the random inputs are from a Gaussian distribution. 
When the size of the training set is limited, the method can recover the 
expansion coefficients by \ac{CS}, \black{under the assumption that there 
exists a sparse representation of surrogate model.}
As we will show, directly employing a regular polynomial basis and/or the sparsity enhancement rotation 
on the random input may result in large recovery error due to the violation of orthogonality for non-standard density distributions.
The procedure of data-driven basis construction described in the present study retains proper orthogonality 
with respect to the associated random inputs and therefore ensures more accurate recovery.
In this sense, the present method takes advantage of both the orthonormal basis expansion and the enhanced
sparsity of the expansion coefficients. 
{The method deals with two situations widely encountered in real-world applications: (\Rmnum{1}) probability measures 
  that are implicitly represented by \textcolor{black}{a large collection of samples} and 
  (\Rmnum{2}) non-Gaussian probability measures with explicit (analytical) forms.
For the first situation, we construct
orthonormal polynomial bases with respect to discrete measures on the sample set. 
Besides the exact orthonormal basis, we also propose a heuristic method to 
construct a \emph{near-orthonormal} basis, which yields a smaller basis bound 
than the exact orthonormal basis and results in more accurate recovery of the 
sparse representation. For the second situation, we construct the orthonormal basis when
the quadrature rules for polynomial integration are known. This construction
is especially well suited to random variables obtained from sparsity enhancement of non-Gaussian distributions. 
}

The paper is organized as follows.
In Section~\ref{sec:prelim}, we present the problem setup and briefly review preliminary background on multivariate orthogonal polynomials and compressed sensing.
In Section~\ref{sec:num_method}, we present the \ac{DSRAR} framework by first introducing 
the methods to construct data-driven orthonormal \ac{amdP} basis. When the underlying density is implicitly represented by a large 
collection of random input samples, we propose a heuristic approach to construct a near-orthonormal basis  along with some heuristics on the advantage over an exactly orthonormal basis.
Then we introduce the rotation-based sparsity enhancement method and provide algorithmic details on how to combine the data-driven basis construction and sparsity enhancement rotation.
In Section~\ref{sec:numerical}, we demonstrate the developed framework 
\black{
in a realistic molecular system fluctuating in a high-dimensional conformational space ($O(10)$) 
as well as \acp{PDE} with arbitrary randomness where the 
underlying distributions are either explicitly known or implicitly represented by a large collection of samples.} 
Concluding remarks and directions for future work are provided in Section~\ref{sec:summary}.

\section{Background} \label{sec:prelim}

\subsection{Approximation with orthogonal polynomials} \label{sec:aPE}

We begin with a few facts about multivariate orthogonal polynomials \cite{dunkl_xu_2014}. 
Let $\Pi^d$ be the set of polynomials in $d$ variables on $\mathbb{R}^d$.
Polynomials in $\Pi^d$ are naturally indexed by the multi-indices set $\mathbb{N}_0^d$.
For $\bm{\alpha} = (\alpha_1,\dots,\alpha_d)\in \mathbb{N}_0^d$ and $\bm{z} = (z_1,\dots,z_d)$, a monomial $z^\alpha$ is defined by $\bm{z}^{\bm{\alpha}} = z_1^{\alpha_1}\cdots z_d^{\alpha_d}$ and the degree of $\bm{z}^{\bm{\alpha}}$ is defined by $|\bm{\alpha}| = a_1+\dots+a_d$. From now on, without confusion, $|\cdot|$ operating on a \black{multi-index} $\ba$ denotes the $\ell_1$ norm of $\ba$ while $|\cdot|$ operating on a set $T$ denotes the cardinality of $T$.
The degree of a polynomial is defined by the largest degree of its monomial terms.
Then the space of polynomials of degree at most $p$ is defined by
\begin{equation}\label{def:P_n^d}
    \Pi_p^d :=  \textrm{span}\{\bm{z}^{\bm{\alpha}}:|\bm{\alpha}|\leq p, \bm{\alpha} \in \mathbb{N}_0^d \}
    \textrm{ and }
    \dim \Pi_p^d = \left(\begin{array}{c} p+d\\ p\end{array}\right).
\end{equation} 
If we equip $\mathbb{R}^d$ with a probability measure $\rho$, then we can define an inner product on $\Pi^d$,
\begin{equation} \label{def:inner_prod}
    \langle f,g \rangle_{\rho} = \int_{\mathbb{R}^d} f g \dif \rho \qquad f,g\in \Pi^d.
\end{equation}
$f$ and $g$ are said to be orthogonal with respect to $\rho$ if $\langle f,g \rangle_{\rho} = 0$.
Given such an inner product, and an order of the set $\mathbb{N}_0^d$, we can apply the Gram-Schmidt process on the ordered set $\{\bm{z}^{\bm{\alpha}}:\bm{\alpha}\in \mathbb{N}_0^d\}$ to generate a sequence of orthogonal polynomials.
We will revisit this construction in Section \ref{sec:data_driven_basis}.
When $d>1$, there is no natural order among monomials.
As a result, multivariate orthogonal polynomials are, in general, not uniquely determined.
In this paper, we choose the \emph{graded lexicographic order} when applying the Gram-Schmidt process, that is, $\bm{z}^{\bm{\alpha}}\succ \bm{z}^{\bm{\beta}}$ if $|\bm{\alpha}|>|\bm{\beta}|$ or if $|\bm{\alpha}| = |\bm{\beta}|$ and the first nonzero entry in the difference $\bm{\alpha}-\bm{\beta}$ is positive. 

When a simulation model is expensive to run, building an approximation of the response of the model output with respect to the variations in the model input can often be an efficient approach to quantify uncertainty propagation.
The polynomial approximation of a function (model) $f(\bm{z}): \mathbb{R}^d\rightarrow\mathbb{R}, d\geq1$ 
where $\bm{z}= (z_1, \dots, z_d)$ $:\Omega\rightarrow \mathbb{R}^d$ is a $d$-dimensional random variable 
with associated probability measure $\rho(\bm{z})$, which is widely used due to its fast convergence when $f(\bm{z})$ is analytic.
In this paper, we will approximate $f$ using an orthogonal polynomial basis.
It is a generalization of the gPC expansion which usually deals with i.i.d. random variables. 

Let $\Psi = \{\psi_{\bm{\alpha}}(\bm{z}):\bm{\alpha}\in \mathbb{N}_0^d\}$ be a set of orthonormal polynomial basis of $\Pi^d$ associated with the measure $\rho(\bm{z})$, that is,
\begin{align}\label{eq:orthogonal}
    \int\psi_{\bm{\alpha}}(\bm{z})\psi_{\bm{\beta}} (\bm{z})\dif \rho(\bm{z})=\delta_{\bm{\alpha}\bm{\beta}}, \quad \bm{\alpha},\bm{\beta}\in \mathbb{N}_0^d,
\end{align}
where $\delta_{\bm{\alpha\beta}} := \prod_{i=1}^d \delta_{\alpha_i,\beta_i}$ to be the multi-index Kronecker delta.
Then the $p$th-degree arbitrary orthogonal polynomial expansion $f_p(\bm{z})$ of function $f(\bm{z})$ associated with $\psi$ is defined as,
\begin{equation}\label{eq:f_ap_expan}
    f(\bm{z}) \approx f_p(\bm{z}) := \sum_{\bm{\alpha} \in \Lambda^d_{p}} c_{\bm{\alpha}}\psi_{\bm{\alpha}}(\bm{z}),\quad \Lambda^d_{p}= \left\{\bm{\alpha}\in \mathbb{N}_0^d : |\bm{\alpha}| \leq p \right\},
\end{equation}
\black{where $c_{\bm\alpha}$ is the coefficient 
  to be evaluated}. Using an ordering of the orthonormal 
polynomial basis, we can change \eqref{eq:f_ap_expan} into the following single index version
\begin{align} \label{eq:ape_single}
    f_p(\bm{z})=\sum_{\bm{\alpha}\in \Lambda^d_{p}}c_{\bm{\alpha}}\psi_{\bm{\alpha}}(\bm{z})=\sum_{n=1}^{N}{c}_n \psi_n(\bm{z}),
\end{align}
where \black{$N$ is the total number of basis and is given by}
\begin{align*}
    N = \dim \Pi_p^d =  |\Lambda^d_{p}| = \left( \begin{array}{c} d+p \\ p\end{array}\right).
\end{align*}

\subsection{Compressed sensing} \label{sec:cs}

Compressed sensing is a well-studied and popular approach to find sparse solutions to linear equations \cite{Candes_2005error, Candes_2008Rip, Davies_2010RICLp,Donoho_2006srs}.
\black{In this subsection, we briefly review the theory of \ac{CS} and discuss the conditions
which allow accurate recovery of solutions to underdetermined linear system.}

Under certain assumptions, the solution---or its approximation---can be found 
by the well-studied $\ell_1$ minimization, i.e., finding the minimizer
\begin{equation}\label{eq:ape_L1}
    \min \|\bm{c}\|_1 \quad \text{subject to }\bm A\bm{c}=\bm{b},
\end{equation}
where $\bm{A}\in\mathbb{R}^{M\times N}$, $\bm b \in \mathbb{R}^M$ 
and $\|\bm{c}\|_1 = \sum_{i=1}^N|c_i|$ is the $\ell_1$ norm of the vector $\bm{c}$.

When the data $\bm b$ is contaminated by noise, the constraint in \eqref{eq:ape_L1} is relaxed to obtain the basis pursuit denoising problem,
\begin{equation}\label{eq:ape_L1_denoise}
    \min \|\bm{c}\|_1 \quad \text{subject to }\|\bm A\bm{c}-\bm{b}\|_2\leq \sigma,
\end{equation}
where $\sigma$ is an estimate of the $\ell_2$ norm of the noise.
The optimization problems \eqref{eq:ape_L1} and \eqref{eq:ape_L1_denoise} can be solved with efficient algorithms from convex optimization \cite{Berg_2007spgl}.

\black{Next we discuss the conditions for the sparse recovery of $\bm c$.}
\begin{defn} \label{def:s-sparse-vec}
    A vector $\bm{c}$ is said to be $s$-sparse if it has at most $s$ nonzero entries, i.e., $\bm{c}$ is supported on $T \subset \{1,\dots,N\}$ with $|T|\leq s$.
\end{defn}
\begin{defn}[Restricted isometry constant \cite{Candes_2005Decoding, Candes_2006Stablesrec}]\label{def:RIC}
    For each integer $s=1,2,\ldots,\label{black}{N}$ define the isometry constant $\delta_s$ of a matrix $\bm A$ as the smallest number such that
    \begin{equation*}
        (1-\delta_s)\|\bm{c}\|_2^2\leq\|\bm A\bm{c}\|_2^2\leq(1+\delta_s)\|\bm{c}\|_2^2
    \end{equation*}
    holds for any $s$-sparse vector $\bm{c}\in \textcolor{black}{\mathbb{R}^{N}}$. 
\end{defn}
The \acp{RIC} characterizes matrices that are nearly orthonormal. The spare recovery is established by the following theorem. 

\begin{thm}[Sparse Recovery for \ac{RIP}-Matrices]\label{thm:rip_recover}
Let $\bm A\in \mathbb{R}^{M\times N}$. Assume that its isometry constant $\delta_{2s}$ satisfies $\delta_{2s}<0.4931$. 
Let \textcolor{black}{$\bm{c}\in\mathbb{R}^N$}, and assume noisy measurements $\bm{b} = \bm{A}\bm{c} + \bm{\eta}$ are given with $\|\bm{\eta}\|_2\leq \sigma$, then the minimizer $\bm{c}^*$ of 
\begin{equation*}
\min \|\bm{c}\|_1 \quad \textrm{subject to }\|\bm A\bm{c}-\bm{b}\|_2\leq \sigma,
\end{equation*}
satisfies
\begin{equation}\label{eq:error}
\begin{split}
\|\bm{c}-\bm{c}^*\|_2&\leq C_1 \frac{\sigma_s(\bm{c})}{\sqrt{s}}+C_2\sigma,\\
\|\bm{c}-\bm{c}^*\|_1&\leq C_3\sigma_s(\bm{c})+C_4\sqrt{s}\sigma.
\end{split}
\end{equation}
where constants $C_1$, $C_2$, $C_3$ and $C_4$ depend only on $\delta_{2s}$, and $\sigma_s(\bm{c}) = \inf_{{\bm{c}_s:\|\bm{c}_s\|_0\leq s}}\|\bm{c}-\bm{c}_s\|_1$ with $\|\bm{c}_s\|_0$ indicates the number of nonzero entries of $\bm{c}_s$ In particular, if $\bm{c}$ is $s$-sparse, then the reconstruction is exact.
\end{thm}
\begin{proof}
    See Rauhut and Ward \cite{Rauhut_2012sparseLegen}.
\end{proof}

A bounded orthonormal system has the following definition.
{\color{black}\begin{defn}\label{def:bnd_orth_sys}
    $\{\psi_n\}, n = 1, \ldots, N$ is a bounded orthonormal system, if
    \begin{align}\label{def:sys_bnd}
        K:=\sup_{n}\|\psi_n\|_{\infty}=\sup_n\sup_{\bm{z}}|\psi_n(\bm{z})|< \infty,
    \end{align}
\end{defn}
where $K$ is called the basis bound. }

These definitions allow us to establish the recoverability of \eqref{eq:ape_L1} based on the \ac{RIP}.
\begin{thm}[\ac{RIP} for bounded orthonormal systems]\label{thm:boundedBOS}
    Let $\bm A\in\mathbb{R}^{M\times N}$ be the interpolation matrix with entries 
    $\{a_{j,n}=\psi_n(\bm{z}^{(j)})\}_{1\leq n\leq N,1\leq j\leq M}$ (see \eqref{eq:matrixelem}), where $\{\psi_n\}$ 
    is a bounded orthonormal system satisfying \eqref{def:sys_bnd}. Assume that
    \begin{equation*}
        M\geq C\delta^{-2}K^2s\log^3(s)\log(N),
    \end{equation*}
    then with probability at least $1-N^{-\gamma\log^3(s)}$, the RIC $\delta_s$ of $1/\sqrt{M}\bm A$ satisfies $\delta_s\leq\delta$.
    Here, $C,\gamma>0$ are universal constants.
\end{thm}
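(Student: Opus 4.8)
The plan is to follow the now-standard route for establishing the restricted isometry property of a randomly sampled bounded orthonormal system: first control the expected restricted isometry constant, then upgrade that bound to a high-probability statement by concentration. Write $\Phi = \tfrac{1}{\sqrt{M}}\bm A$ and let $X_j = (\psi_1(\bm{z}^{(j)}),\dots,\psi_N(\bm{z}^{(j)}))^*$ denote the random $j$th sampling vector, so that the rows of $\sqrt{M}\,\Phi$ are the $X_j$. The first observation is that the orthonormality \eqref{eq:orthogonal} of $\{\psi_n\}$ with respect to $\rho$, together with the assumption that the $\bm{z}^{(j)}$ are drawn independently from $\rho$, gives $\mathbb{E}[X_j X_j^*] = I_N$, hence $\mathbb{E}[\Phi^*\Phi] = I_N$. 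So the quantity to control is the fluctuation of $\Phi^*\Phi$ about its mean, restricted to sparse supports. Concretely, by Definition \ref{def:RIC},
\[
\delta_s = \sup_{\|\bm{x}\|_2=1,\ \bm{x}\ s\text{-sparse}} \left| \|\Phi\bm{x}\|_2^2 - \|\bm{x}\|_2^2 \right| = \sup_{\bm{x}\in D_s} \left| \langle (\Phi^*\Phi - I_N)\bm{x},\bm{x}\rangle \right|,
\]
where $D_s$ is the set of $s$-sparse unit vectors. The problem thus reduces to bounding the supremum over $D_s$ of the centered empirical process $\bm{x}\mapsto \tfrac{1}{M}\sum_{j}\left(|\langle X_j,\bm{x}\rangle|^2 - 1\right)$, using $\mathbb{E}|\langle X_j,\bm{x}\rangle|^2 = \|\bm{x}\|_2^2 = 1$.

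Next I would estimate $\mathbb{E}[\delta_s]$ by symmetrization and chaining. Introducing i.i.d.\ Rademacher variables $\epsilon_j$, a standard symmetrization inequality gives $\mathbb{E}[\delta_s] \leq 2\,\mathbb{E}\sup_{\bm{x}\in D_s} \left| \tfrac{1}{M}\sum_j \epsilon_j |\langle X_j,\bm{x}\rangle|^2 \right|$. Conditioning on the $X_j$, the inner object is a sub-Gaussian (Rademacher) process indexed by $D_s$ whose increments are measured in a pseudometric determined by the $X_j$, and I would bound its expected supremum by Dudley's entropy integral. Here the bounded orthonormal assumption \eqref{def:sys_bnd} enters decisively: the pointwise bound $|\langle X_j,\bm{x}\rangle| \leq K \|\bm{x}\|_1 \leq K\sqrt{s}\,\|\bm{x}\|_2$ for $s$-sparse $\bm{x}$ controls the increments and injects the factor $K^2 s$, while covering-number estimates for the union over $\binom{N}{s}$ supports of $s$-dimensional unit balls produce the $\log(N)$ factor and the polylogarithmic $\log^3(s)$ factor. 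Combining these yields a bound of the form $\mathbb{E}[\delta_s] \lesssim \sqrt{K^2 s \log^3(s)\log(N)/M} + K^2 s \log^3(s)\log(N)/M$; under the hypothesis $M \geq C\delta^{-2}K^2 s \log^3(s)\log(N)$ with $C$ taken large enough, this is at most $\delta/2$.

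Finally I would upgrade this in-expectation estimate to the stated high-probability bound by a deviation inequality for the supremum of the nonnegative empirical process above — for instance a Talagrand/Bousquet-type concentration bound, or the specialized deviation inequality for suprema of chaos processes used in \cite{Rauhut_2012sparseLegen}. The uniform bound $K$ again supplies the envelope and weak-variance parameters needed, and the tail decays fast enough that, after absorbing $\mathbb{E}[\delta_s]\le\delta/2$, the event $\{\delta_s>\delta\}$ has probability at most $N^{-\gamma\log^3(s)}$ for a universal $\gamma>0$.

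The hard part is the chaining step. Obtaining the sharp polylogarithmic power $\log^3(s)$ — rather than a cruder $\log(N)$-type dependence — while taking the supremum uniformly over all $\binom{N}{s}$ supports at once is delicate; it rests on precise covering-number estimates and on exploiting the relation $\|\bm{x}\|_1\le\sqrt{s}\,\|\bm{x}\|_2$ so that the sub-Gaussian increments of the Rademacher process are controlled in the correct metric. By contrast, the concentration step is comparatively routine once the envelope and variance parameters are read off from the $K$-bound. Since the result is a verbatim specialization of the estimate in \cite{Rauhut_2012sparseLegen} to the data-driven orthonormal basis of \eqref{eq:matrixelem}, the cleanest exposition simply verifies that the sampled basis satisfies Definition \ref{def:bnd_orth_sys} with the stated constant $K$ and invokes that theorem.
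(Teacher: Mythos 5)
The paper offers no proof of this theorem: it is quoted verbatim from \cite{Rauhut_2012sparseLegen} as background, so the only "proof" in the paper is the citation itself. Your outline --- reduce $\delta_s$ to the supremum of the centered empirical process $\bm{x}\mapsto\langle(\Phi^*\Phi-I)\bm{x},\bm{x}\rangle$ over $s$-sparse unit vectors, bound its expectation by symmetrization plus Dudley chaining using $|\langle X_j,\bm{x}\rangle|\le K\sqrt{s}\,\|\bm{x}\|_2$, then upgrade to a tail bound by concentration --- is a faithful summary of the standard Rudelson--Vershynin/Rauhut argument that the cited reference carries out, and your closing observation that one need only verify Definition \ref{def:bnd_orth_sys} and invoke that reference is exactly what the paper does.
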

\begin{proof}
    See Rauhut and Ward \cite{Rauhut_2012sparseLegen}.
\end{proof}

Theorem \ref{thm:rip_recover} and Theorem \ref{thm:boundedBOS} establish the sparse recoverability of the bounded orthonormal systems.

\section{Methods} \label{sec:num_method}
In this section, we introduce the \ac{DSRAR} framework to construct
surrogate model. The goal of this study is to determine, given a small set of $M\ll N$ unstructured realizations 
$\{\bm{z}^{(i)}\}_{i=1}^M$ and the corresponding outputs $b=(f(\bm{z}^{(1)}),...f(\bm{z}^{(M)}))^{T},$ the polynomial 
approximation in \eqref{eq:f_ap_expan} or \eqref{eq:ape_single} when $f(\bm{z})$ has a sparse representation. 
{\color{black}This small set $\{\bm{z}^{(i)}\}_{i=1}^M$ is usually called 
\emph{training set} and $M$ is the \emph{training sample size.}}
There are two quantities we need to compute: (i) an appropriate orthonormal polynomial basis $\psi$ and (ii) an interpolation-type sparse solution $\bm{c} = (c_1,\dots,c_N)^T\in \mathbb{R}^N$ such that $f_p(\bm{z}^{(i)}) = f(\bm{z}^{(i)})$ for $i = 1,\dots,M$ with the smallest possible number nonzero $\bm{c}$.
The basis construction, step (i), will be discussed in detail in Section \ref{sec:data_driven_basis}.
We can reformulate the second part as the following constrained optimization problem,
\begin{equation}\label{eq:ape_L0}
    \min \|\bm{c}\|_0 \quad \text{subject to }\bm A\bm{c}=\bm{b},
\end{equation}
where $\|\bm{c}\|_0$ indicates the number of nonzero entries of $\bm{c}$ and $\bm A \in \mathbb{R}^{M \times N}$ (usually called the measurement matrix) is written as
\begin{eqnarray}\label{eq:matrixelem}
    \bm A=(a_{ij})_{1\leq i\leq M,1\leq j\leq N}, \quad a_{ij}=\psi_{j}(\bm{z}^{(i)}).
\end{eqnarray}
It is well known that this $\ell_0$ minimization problem \eqref{eq:ape_L0} is NP-hard \cite{Natarajan_1995L0}. 
As mentioned in Section~\ref{sec:cs}, \ac{CS} is a well-studied and popular approach to find sparse solutions to \eqref{eq:ape_L0} through $\ell_1$-minimization shown in \eqref{eq:ape_L1} (no noise) or \eqref{eq:ape_L1_denoise} (with noise).
Therefore, \black{the approach introduced below can be viewed} as a method for 
data-driven construction of bases that allow sparse representation and accurate 
recovery for \acp{QoI} in \ac{UQ} applications.

\subsection{Data-driven construction of the \ac{amdP} basis}\label{sec:data_driven_basis}
Let us start with a set of \black{samples of $d$-dimensional} random vector $\bx \in \mathbb{R}^d$, i.e., $S := \left\{\bx^{(k)}\right\}_{k = 1}^{N_s}$ with the underlying probability measure $\rho(\bx).$ $S$ is usually called the \emph{sample set}.
We aim to construct a set of orthonormal polynomial basis functions $\left\{ \psi_{\ba}(\bx)\right\}_{\vert \ba \vert = 0}^p$ with respect to  $\rho(\bx)$ in $\Pi_p^d,$ the space of polynomials up to degree $p$.
Since $\rho(\bx)$ can be non-Gaussian or even unknown, \emph{we do not make the assumption that {\color{black} each component of $\bx$ is mutually} independent, even under a linear transformation such as those based on \ac{PCA}}.
Consequently, the orthogonal polynomial basis $\psi_{\bm\alpha}(\bx)$ cannot 
be directly constructed as a tensor product 
of univariate {\color{black}orthonormal basis functions in each 
component of $\bx$}. \black{Below, we introduce a data-driven approach 
to construct multivariate \ac{amdP} randomness}.

\subsubsection{Orthonormal basis}
When we have a collection of random samples $S$, and the 
underlying probability measure $\rho(\bx)$ can be approximated by the 
discrete measure $\nu_S(\bx)$
\begin{equation}\label{def:disc_meas}
  \rho(\bx) \approx \displaystyle \nu_S(\bx) := \frac{1}{N_s}\sum_{\bx^{(k)} \in S}\delta_{\bx^{(k)}}(\bx),
\end{equation}
where $\delta_{\bx^{(k)}}$ is the Dirac measure, that is $\delta_{\bx^{(k)}}(\bx)$ is equal to 1 when $\bx = \bx^{(k)}$ and 0 otherwise.
Given the inner product defined as in \eqref{def:inner_prod} with $\rho$ replaced by the discrete measure $\nu_{S},$ we 
can construct a set of orthonormal multivariate polynomial basis functions $\left\{\psi_{\ba}(\bx)\right\}_{\vert \ba \vert = 0}^{p}$ 
via the Gram-Schmidt orthogonalization process on an ordered monomial basis $\{\hat{\psi}_{\bm{\alpha}}(\bx)\}_{|\ba|=0}^p$. 
Here, we use the aforementioned {graded lexicographic ordering} of the multi-index.

Similar to Dunkl and Xu \cite{dunkl_xu_2014}, $\psi_{\ba}$ can be constructed using the recursive formulation 
\begin{equation}
  \psi_{\bm{\alpha}}(\bx) = f_{\bm{\alpha}}^{\bm{\alpha}} \hat{\psi}_{\ba}(\bx) - \sum_{\bm{\beta} \prec \bm{\alpha}} f_{\bm{\beta}}^{\bm{\alpha}} \psi_{\bm{\beta}}(\bx),
  \label{eq:exact_orth}
\end{equation}
where $\hat{\psi}_{\ba}(\bx) := \prod_{k=1}^{d} \xi^{\alpha_k}_k$ represents the multivariate monomial basis function.
The expression $\bm{\beta} \prec \ba$ means that the multi-index $\bm{\beta}$ comes before $\ba$ under the chosen ordering.   
The coefficients $f_{\bm{\beta}}^{\bm{\alpha}}$ are determined by imposing an orthonormal condition with respect to the discrete measure $\nu_{S}$, i.e.,
\begin{equation}
  \begin{split}
    \int \psi_{\bm{\alpha}}(\bm{\xi}) \psi_{\bm{\beta}}(\bm{\xi}) \dif \rho(\bx)  &\approx \int \psi_{\bm{\alpha}}(\bm{\xi}) \psi_{\bm{\beta}}(\bm{\xi}) \dif \nu_{S}(\bx) \\
    &= \frac{1}{N_s}\sum_{k=1}^{N_s} \psi_{\bm{\alpha}}(\bm{\xi}^{(k)})\psi_{\bm{\beta}}(\bm{\xi}^{(k)}) \\
    &  \equiv \delta_{\bm{\alpha,\beta}}, \quad\quad \bm{\beta} \preceq \bm{\alpha}.
  \end{split} \label{eq:exact_orth_coeff}
\end{equation} 
Equations \eqref{eq:exact_orth} and \eqref{eq:exact_orth_coeff} generate a set of orthonormal basis 
functions on the discrete measure 
$\nu_{S}$ \black{irrespective of the mutual dependence between the components of $\bx$.}
\black{We employ $\left\{\psi_{\ba}(\bx)\right\}_{\vert \ba \vert = 0}^{p}$ as 
  the \ac{amdP} basis on $\rho(\bx)$.   
} 
\textcolor{black}{Numerically, the modified Gram-Schmidt orthogonalization can be 
used as an alternative approach when the number of basis is too large and there exists 
instability in the standard Gram-Schmidt orthogonalization.}

When $\rho(\bx)$ is known explicitly, orthonormal basis functions can also be constructed 
by taking the general formulation in Equation \eqref{eq:exact_orth} and 
imposing the inner product in Equation \eqref{def:inner_prod} with respect to $\rho$.
Here we will also consider a special case when $\bx$ is a random vector that is 
linearly transformed from {\color{black} a random vector $\bm z$ with i.i.d.\ components} via $\bx = \mathbf{Q}\bm z.$ 
This case is motivated by the sparsity enhancement approach discussed in 
Sec.\ \ref{sec:sparsity_enhancement}. In particular, we assume that the quadrature rule 
of polynomial integration
with respect to the probability measure of $\bm z$ is explicitly known. 

Given these assumptions, $f_{\bm{\beta}}^{\bm{\alpha}}$ can be determined by \textcolor{black}{Equation \eqref{eq:exact_orth} and the orthonormal condition}
\begin{equation}
  \begin{split}
    \int \psi_{\bm{\alpha}}(\bm{\xi}) \psi_{\bm{\beta}}(\bm{\xi}) \dif \rho(\bx) &= \sum_{k=1}^{N_Q} \psi_{\bm{\alpha}}\left(\mb Q \bm{z}_Q^{(k)}\right) \psi_{\bm{\beta}}\left(\mb Q \bm{z}_Q^{(k)}\right) w_k \\
    &  \equiv \delta_{\bm{\alpha,\beta}}, \quad\quad \bm{\beta} \preceq \bm{\alpha}.
  \end{split} \label{eq:exact_orth_coeff_quad}
\end{equation}
where $\left\{\bm z_Q^{(k)}\right\}_{k=1}^{N_Q}$ and $\left\{w_k\right\}_{k=1}^{N_Q}$ represent the quadrature points and weights constructed to yield an exact integration with probability measure of $\bm z$ for polynomials of degree $\vert \ba\vert + \vert \bm\beta\vert$ or less. 
\begin{algorithm}[htp]
  \hrule
  \caption{Construct the orthonormal \black{\ac{amdP}} basis $\left\{ \psi_{\ba}(\bx)\right\}_{\vert \ba \vert = 0}^p$ on discrete sample set $S$.}
  \vspace{5pt} \hrule \vspace{5pt}
\begin{algorithmic}[1]
    \State Given sample set  $S = \left\{\bx^{(k)}\right\}_{k = 1}^{N_s}$.
    \State Given a fixed multi-index order $\left\{\ba^{(l)}\right\}_{l=1}^N$.
    \FOR{$l=1$ to $N$}
\State Let $\ba = \ba^{(l)}$, construct $\displaystyle \psi_{\bm{\alpha}}(\bx) = f_{\bm{\alpha}}^{\bm{\alpha}} \hat{\psi}_{\ba}(\bx) - \mathop{\sum}_{\bm{\beta} \prec \bm{\alpha}} f_{\bm{\beta}}^{\bm{\alpha}} \psi_{\bm{\beta}}(\bx)$ subject to Equation \eqref{eq:exact_orth_coeff}.
\ENDFOR
\hrule
\end{algorithmic}
\label{alg:orth_discrete}
\end{algorithm}

\begin{algorithm}[htp]
  \hrule
  \caption{Construct the orthonormal \black{\ac{amdP}} basis $\left\{ \psi_{\ba}(\bx)\right\}_{\vert \ba \vert = 0}^p$ with probability measure $\rho(\ba)$.}
  \vspace{5pt} \hrule \vspace{5pt}
  \begin{algorithmic}[1]
    \State Given a multi-index order $\left\{\ba^{(l)}\right\}_{l=1}^N$.
    \FOR{$l=1$ to $N$} 
 
    \State Let $\ba = \ba^{(l)}$, construct $\displaystyle \psi_{\bm{\alpha}}(\bx) = f_{\bm{\alpha}}^{\bm{\alpha}} \hat{\psi}_{\ba}(\bx) - \mathop{\sum}_{\bm{\beta} \prec \bm{\alpha}} f_{\bm{\beta}}^{\bm{\alpha}} \psi_{\bm{\beta}}(\bx)$ by evaluating the basis inner product using existing quadrature rule or Equation \eqref{eq:exact_orth_coeff_quad} if $\bx$ can be linearly transformed from {\color{black} a  random vector with i.i.d.\ components} $\bm z$  with an explicitly known quadrature rule.
    \ENDFOR
    \end{algorithmic} 
   \hrule
  \label{alg:orth_density}
\end{algorithm}
Algorithms \ref{alg:orth_discrete} and \ref{alg:orth_density} summarize the procedure of orthonormal basis construction when $\rho(\bx)$ is implicitly represented by a sample set $S$ and known explicitly, respectively.
There is no unique system of orthogonal polynomial basis functions for both scenarios if $d > 1$; different orderings of $\ba$ lead to different orthogonal basis \cite{dunkl_xu_2014}.
On the other hand, the constructed orthonormal basis is unique up to unitary transformations as we prove in Theorem \ref{thm:orth_basis}.

\begin{thm}
  Let $\left\{\psi_{\ba}(\bx)\right\}_{\vert \ba \vert = 0}^{p}$ be a set of orthonormal polynomial basis  with respect to the measure $\rho(\bx)$, $\bx \in \mathbb{R}^d$.
  Denote by $\bm{\varPsi}(\bx)$ the polynomial basis vector
  \begin{equation}
    \bm{\varPsi}(\bx) := (\psi_{\ba^{(1)}}, \cdots, \psi_{\ba^{(N)}})^T,
    \label{eq:poly_basis_vector}
  \end{equation}
  where $\ba^{(1)}, \cdots, \ba^{(N)}$ is the arrangement of multi-index $\ba$ according to a fixed multi-index order.
  Let $\bm\chi = \mathbf{Q}\bx$, where $\mathbf{Q}\in \mathbb{R}^{d\times d}$ is invertible. Let  $\left\{\phi_{\bm\beta}(\bm\chi)\right\}_{\vert \bm\beta \vert = 0}^{p}$ be a set of orthonormal polynomial basis functions with respect to a measure $\rho'(\bm\chi)$ constructed with order $\bm\beta^{(1)},\cdots,\bm\beta^{(N)}$, where $\rho'(\bm \chi)$ is induced from $\rho(\bx)$.
  Then there exists a unitary matrix $\mathbf{U}$ such that $\bm{\varPhi}(\bm\chi) = \mathbf{U}  \bm{\varPsi}(\bx)$, where $\bm{\varPhi}(\bm\chi):=(\phi_{\bm\beta^{(1)}}, \cdots, \phi_{\bm\beta^{(N)}})^T$ denotes the corresponding polynomial basis vector.
  \label{thm:orth_basis}
\end{thm}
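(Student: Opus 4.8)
The plan is to reduce the statement to the elementary fact that any two orthonormal bases of the same finite-dimensional inner-product space are related by a unitary matrix; the bridge between the two bases is the change of variables $\bm\chi = \mathbf{Q}\bx$. First I would pull the $\phi$-basis back to the $\bx$-variable by setting $\tilde\phi_{\bm\beta}(\bx) := \phi_{\bm\beta}(\mathbf{Q}\bx)$. Because $\mathbf{Q}$ is invertible and linear, it maps $\Pi_p^d$ bijectively onto itself and neither raises nor lowers total degree, so each $\tilde\phi_{\bm\beta}$ is again a polynomial in $\bx$ of degree at most $p$. Hence both $\left\{\psi_{\ba}\right\}$ and $\left\{\tilde\phi_{\bm\beta}\right\}$ are bases of the \emph{same} $N$-dimensional space $\Pi_p^d$, now viewed as functions of $\bx$.

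Next I would verify that the pulled-back system is orthonormal with respect to the original measure $\rho$. Since $\rho'$ is, by hypothesis, the measure induced from $\rho$ by $\mathbf{Q}$ (i.e. its pushforward), the change-of-variables identity gives
\begin{equation*}
\int \tilde\phi_{\bm\beta}(\bx)\,\tilde\phi_{\bm\beta'}(\bx)\,\dif\rho(\bx)
= \int \phi_{\bm\beta}(\bm\chi)\,\phi_{\bm\beta'}(\bm\chi)\,\dif\rho'(\bm\chi)
= \delta_{\bm\beta\bm\beta'},
\end{equation*}
so $\left\{\tilde\phi_{\bm\beta}\right\}$ is an orthonormal basis of $\Pi_p^d$ for the inner product $<\cdot,\cdot>_{\rho}$, exactly as $\left\{\psi_{\ba}\right\}$ is.

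The final step expands one orthonormal basis in the other. Writing $\tilde\phi_{\bm\beta^{(i)}} = \sum_{j=1}^N U_{ij}\,\psi_{\ba^{(j)}}$ for a unique matrix $\mathbf{U}=(U_{ij})$ (such an expansion exists because the two families span the same space), and imposing orthonormality yields
\begin{equation*}
\delta_{ii'} = \int \tilde\phi_{\bm\beta^{(i)}}\,\tilde\phi_{\bm\beta^{(i')}}\,\dif\rho
= \sum_{j,j'} U_{ij}U_{i'j'}\,\delta_{jj'} = \left(\mathbf{U}\mathbf{U}^T\right)_{ii'},
\end{equation*}
so $\mathbf{U}\mathbf{U}^T = \mathbf{I}$, i.e. $\mathbf{U}$ is unitary. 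Since $\tilde\phi_{\bm\beta}(\bx) = \phi_{\bm\beta}(\bm\chi)$ componentwise, the expansion reads $\bm{\varPhi}(\bm\chi) = \mathbf{U}\,\bm{\varPsi}(\bx)$, which is precisely the claim.

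I expect the computational content to be routine; the only genuine points requiring care are in the second step, which I would treat as the crux. One must justify that ``$\rho'$ induced from $\rho$'' indeed means the pushforward under $\mathbf{Q}$, so that the change-of-variables identity holds verbatim (this interpretation is in fact what makes the theorem true), and one must confirm that an invertible linear substitution preserves polynomial degree, guaranteeing that $\left\{\psi_{\ba}\right\}$ and $\left\{\tilde\phi_{\bm\beta}\right\}$ span the identical space. Once these two facts are in place, the unitarity of $\mathbf{U}$ is immediate from the orthonormality bookkeeping above, and the differing multi-index orderings used to build the two bases are irrelevant, being absorbed into $\mathbf{U}$.
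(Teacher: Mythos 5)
Your proposal is correct and follows essentially the same route as the paper: both arguments rest on the observation that the invertible linear substitution $\bm\chi=\mathbf{Q}\bx$ maps $\Pi_p^d$ onto itself so that $\{\phi_{\bm\beta}(\mathbf{Q}\bx)\}$ and $\{\psi_{\ba}(\bx)\}$ span the same space (the paper routes this through transfer matrices relative to the monomial basis, you expand one orthonormal family directly in the other), and then deduce $\mathbf{U}\mathbf{U}^T=\mathbf{I}$ from orthonormality under the change of measure. Your explicit remark that ``induced'' must mean the pushforward of $\rho$ under $\mathbf{Q}$ makes precise a step the paper's proof uses implicitly, but the substance is the same.
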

\begin{proof}
  Let $\hat{\bm{\varPsi}}(\bx)$ be the monomial basis vector.
  Note that $\left\{\psi_{\ba}(\bx)\right\}_{\vert \ba \vert = 0}^{p}$ and $\left\{\phi_{\bm\beta}(\bm\bx)\right\}_{\vert \bm\beta \vert = 0}^{p}$  are two sets of basis in $\Pi_{p}^d$.
  There exists transfer matrices $\mathcal{M_{\psi}}$ and $\mathcal{M_{\phi}}\in \mathbb{R}^{N\times N}$ such that
  \begin{equation}
    \bm{\varPsi}(\bm\xi)= \mathcal{M}_{\psi} \hat{\bm{\varPsi}}(\bx), \quad \bm{\varPhi}(\bm\xi)= \mathcal{M}_{\phi} \hat{\bm{\varPsi}}(\bx). \nonumber
  \end{equation}
  With $\bm\chi = \mathbf{Q}\bx$, $\bm\varPhi(\mathbf{Q}\bx)$ is also a basis in $\Pi_p^d$.
  Then there exists an invertible matrix $\mathcal{T}\in\mathbb{R}^{N\times N}$ such that
  \begin{equation}
    \bm{\varPhi}(\bm\chi) = \bm{\varPhi}(\mathbf{Q}\bx)= \mathcal{T} \hat{\bm{\varPsi}}(\bx) \nonumber,
  \end{equation}
  which gives $\bm{\varPhi}(\bm\chi) = \mathcal{U} \bm{\varPsi}(\bx)$, where $\mathcal{U} = \mathcal{T} \mathcal{M}_{\psi}^{-1}$.
  Recall $\left\{\psi_{\ba}(\bx)\right\}_{\vert \ba \vert = 0}^{p}$ and $\left\{\phi_{\bm\beta}(\bm\chi)\right\}_{\vert \bm\beta \vert = 0}^{p}$ are orthonormal basis with respect to $\rho(\bx)$ and $\rho'(\bm\chi)$, we have
  \begin{equation}
    \mathbf{I} = \int \bm{\varPhi}(\bm\chi) \bm{\varPhi}(\bm\chi)^T \dif \rho'(\bm\chi) = \int \mathcal{U}\bm{\varPsi}(\bm\xi) \bm{\varPsi}(\bm\xi)^T  \mathcal{U}^T  \dif \rho(\bm\xi) = \mathcal{U} \mathcal{U}^T
  \end{equation}
\end{proof}
We do not need further assumptions on $\rho(\bx)$ because Theorem~\ref{thm:orth_basis} holds both when $\rho(\bx)$  is a measure on the continuous random vector $\bx$ (with probability density function $\omega(\bx)$) or a discrete measure $\nu_S(\bx)$ on a sample set $S$.
Furthermore, it is straightforward to show the following Corollary.
\begin{cor}
  Let $S_1 := \left\{\bx^{(k)}\right\}_{k = 1}^{M}$ and $S_2 := \left\{\bm\chi^{(k)}\right\}_{k = 1}^{M}$ be two sets of random sampling points where $\bm\chi^{(k)} = \mathbf{Q}\bm\xi^{(k)}$ with invertible $\mathbf{Q}$.
  Let $\mathbf{G}_{\bx}$ and $\mathbf{G}_{\bm\chi}$ be the Gram matrix constructed by $\bm{\varPsi}(\bx)$ and $\bm{\varPhi}(\bm\chi)$ defined in Theorem~\ref{thm:orth_basis}, i.e.,
    $\displaystyle \mathbf{G}_{\bx} := \sum_{k=1}^{M} \bm{\varPsi}(\bx^{(k)}) \bm{\varPsi}(\bx^{(k)})^T / M$ and $\displaystyle \mathbf{G}_{\bm\chi} := \sum_{k=1}^{M} \bm{\varPhi}(\bm\chi^{(k)}) \bm{\varPhi}(\bm\chi^{(k)})^T / M$.
  Then $\mathbf{G}_{\cdot}$ has invariant $l_2$ norm, that is, $\Vert \mathbf{G}_{\bx}\Vert_2 = \Vert \mathbf{G}_{\bm\chi}\Vert_2$. \label{col:orth_basis}
  Moreover, $\Vert \mathbf{G}_{\bx}-\mathbf{I}\Vert_2 = \Vert \mathbf{G}_{\bm\chi}-\mathbf{I}\Vert_2$.
\end{cor}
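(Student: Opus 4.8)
The plan is to reduce the entire statement to the single functional identity furnished by Theorem \ref{thm:orth_basis}. That theorem guarantees a unitary matrix $\mathbf{U}$, independent of the argument, such that $\bm{\varPhi}(\bm\chi) = \mathbf{U}\,\bm{\varPsi}(\bx)$ holds identically in $\bx$ (with $\bm\chi = \mathbf{Q}\bx$). First I would specialize this identity to the sample points: since $\bm\chi^{(k)} = \mathbf{Q}\bm\xi^{(k)}$ and $\bm\xi^{(k)} = \bx^{(k)}$, evaluating the identity at each $\bx^{(k)}$ yields the pointwise relation $\bm{\varPhi}(\bm\chi^{(k)}) = \mathbf{U}\,\bm{\varPsi}(\bx^{(k)})$ for every $k = 1,\dots,M$, with the \emph{same} $\mathbf{U}$ in each instance.

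Next I would substitute this into the definition of $\mathbf{G}_{\bm\chi}$ and factor $\mathbf{U}$ out of the empirical sum, which is legitimate precisely because $\mathbf{U}$ does not depend on the summation index $k$:
\begin{equation*}
\mathbf{G}_{\bm\chi} = \frac{1}{M}\sum_{k=1}^{M}\mathbf{U}\,\bm{\varPsi}(\bx^{(k)})\,\bm{\varPsi}(\bx^{(k)})^T\mathbf{U}^T = \mathbf{U}\left(\frac{1}{M}\sum_{k=1}^{M}\bm{\varPsi}(\bx^{(k)})\,\bm{\varPsi}(\bx^{(k)})^T\right)\mathbf{U}^T = \mathbf{U}\,\mathbf{G}_{\bx}\,\mathbf{U}^T.
\end{equation*}
This exhibits $\mathbf{G}_{\bm\chi}$ and $\mathbf{G}_{\bx}$ as unitarily similar, which is the crux of the argument.

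Finally I would invoke the unitary invariance of the spectral norm. Because $\mathbf{U}\mathbf{U}^T = \mathbf{I}$, we have $\Vert\mathbf{G}_{\bm\chi}\Vert_2 = \Vert\mathbf{U}\,\mathbf{G}_{\bx}\,\mathbf{U}^T\Vert_2 = \Vert\mathbf{G}_{\bx}\Vert_2$, giving the first claim. For the shifted statement I would write $\mathbf{I} = \mathbf{U}\,\mathbf{I}\,\mathbf{U}^T$, so that $\mathbf{G}_{\bm\chi} - \mathbf{I} = \mathbf{U}(\mathbf{G}_{\bx} - \mathbf{I})\mathbf{U}^T$, and apply the same invariance to obtain $\Vert\mathbf{G}_{\bm\chi} - \mathbf{I}\Vert_2 = \Vert\mathbf{G}_{\bx} - \mathbf{I}\Vert_2$.

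I do not expect a genuine obstacle in this Corollary; it is essentially a corollary in the strong sense, a one-line consequence once the unitary similarity is set up. The only point demanding care — and the only place the hypotheses are really used — is verifying that the matrix $\mathbf{U}$ from Theorem \ref{thm:orth_basis} is a \emph{global} object acting simultaneously at all $M$ sample points, rather than a point-dependent transformation; this is exactly what the functional (as opposed to merely pointwise) identity in the theorem provides, and it is what permits pulling $\mathbf{U}$ outside the empirical average. Everything else is the standard spectral fact that unitary similarity preserves eigenvalues, and hence the spectral norm of a symmetric matrix as well as of any rigid shift of it by $\mathbf{I}$.
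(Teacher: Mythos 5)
Your proof is correct and is exactly the argument the paper intends: the paper itself gives no proof of this Corollary (it merely remarks that it is ``straightforward to show''), and the unitary-similarity computation $\mathbf{G}_{\bm\chi} = \mathbf{U}\,\mathbf{G}_{\bx}\,\mathbf{U}^T$ followed by unitary invariance of the spectral norm is the standard one-line consequence of Thm.~\ref{thm:orth_basis}. Your remark that the key hypothesis is the globality of $\mathbf{U}$ (a single matrix valid at all sample points, supplied by the functional identity in the theorem) correctly identifies the only point where care is needed.
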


In general, the $l_2$ norm of $\Vert \mathbf{G}_{\bx}-\mathbf{I}\Vert_2$ is independent of specific monomial order of $\ba$ and invariant under linear transformations of the random vector.
The basis functions $\left\{\psi_{\ba}(\bx)\right\}_{\vert \ba \vert = 0}^{p}$ constructed by Equations \eqref{eq:exact_orth} and \eqref{eq:exact_orth_coeff} provide an appropriate candidate for representing the surrogate model $f(\bx)$ via \ac{CS}.

\subsubsection{Near-orthonormal basis}

When $\rho(\bx)$ is implicitly represented by a sample set $S$, we employ the discrete measure $\nu_S$ to construct $\left\{\psi_{\ba}(\bx)\right\}_{\vert \bm\beta \vert = 0}^{p}$. 
However, we note that the training set that queries $f(\cdot)$, denoted by $S_f$, may not be a subset of $S$.
In practice, the sample set $S$ and the training set $S_f$ are usually collected in a sequential manner or directly from different experiments, although individual sampling points of both $S$ and $S_f$ follow the same distribution.
Since $S$ only contains a finite number of samples of $\bx$, basis $\left\{\psi_{\ba}(\bx)\right\}_{\vert \ba \vert = 0}^{p}$  constructed by \eqref{eq:exact_orth} and \eqref{eq:exact_orth_coeff} is not the ``exact orthonormal'' basis with respect to $\rho(\bx)$.
Especially, let $S' = \left\{{\bx'}_k\right\}_{k = 1}^{N_s}$ be another sample set following the same distribution $\rho(\cdot)$ and $\nu_{S'}(\cdot)$ be the discrete measure defined on $S'$.
For the orthonormal \black{\ac{amdP}} basis functions $\left\{\psi_{\ba}(\bx)\right\}_{\vert \ba \vert = 0}^{p}$ 
constructed on $S$, we have $\mathbb{E}\left[ \bm{\varPsi}(\bx) \bm{\varPsi}(\bx)^T \right] \neq \mathbf{I}$ 
under the discrete measure $\nu_{S'}(\bx)$ and vice versa.

The above observation forces us to re-examine the orthonormal condition imposed by \eqref{eq:exact_orth_coeff}.
Since the pre-constructed basis $\psi_{\ba}(\bx)$ does not retain the exact orthonormal condition when later being applied to approximate $f(\bx)$, we may relax the condition when determining the coefficients $f_{\bm\beta}^{\ba}$ in \eqref{eq:exact_orth}.
In the present study, we propose the following heuristic criterion 
\begin{equation}
  \arg\min_{\hat{\mathbf{f}}^{\bm\alpha}} \Vert \hat{\mathbf{f}}^{\bm\alpha}\Vert_2~~\mbox{subject to}~~ \left \vert \int \psi_{\bm{\alpha}}(\bm{\xi}) \psi_{\bm{\beta}}(\bm{\xi}) \dif \nu_{S}(\bx) - \delta_{\bm{\alpha,\beta}}\right \vert < \zeta_{\ba,\bm\beta},~~ \bm{\beta} \le \bm{\alpha},
  \label{eq:near_orth_condition}
\end{equation}
where $\hat{\mathbf{f}}^{\bm\alpha}$ is the coefficient vector of $\psi_{\bm\alpha}$ when represented using monomial basis functions, i.e., $\displaystyle \psi_{\bm\alpha}(\bx) = \sum_{\bm\beta \le \bm\alpha} \hat{f}_{\bm\beta}^{\bm\alpha} \hat{\psi}_{\bm\beta}(\bx)$.  $\hat{\mathbf{f}}^{\bm\alpha}$ is related to $\mathbf{f}^{\ba}$ through the linear transformation
\begin{equation}
  \hat{\mathbf{f}}^{\bm\alpha} =
  \begin{pmatrix}
    \mathbf{F} & 0 \\
    0 & 1
  \end{pmatrix}
  \mathbf{f}^{\ba},
  \label{eq:coeff_transform}  
\end{equation}
where $\mathbf{F}$ is an upper triangle matrix determined by pre-computed $\hat{\mathbf{f}}^{\bm\beta}, \bm\beta \prec \ba$, i.e.,
\begin{equation}
  \left[\mathbf{F}\right]_{I_{\bm\beta'} I_{\bm\beta}} = 
  \begin{cases}
    \hat{f}_{\bm\beta'}^{\bm\beta} \quad &\bm\beta' \preceq \bm\beta \\
    0 \quad  &\bm\beta' \succ \bm\beta,
  \end{cases}
\end{equation}
where $I_{\bm\beta}$ represents the mapping from multi-index to single index.

The parameter $\zeta_{\ba,\bm\beta}$ quantifies the relaxation of the orthonormal condition. We split the sample set $S$ equally into two parts $S:=S_1\cup S_2$.
Denote $\left\{\psi^{\left(1\right)}_{\ba}(\bx)\right\}_{\vert \ba \vert = 0}^{p}$ and $\left\{\psi^{\left(2\right)}_{\ba}(\bx)\right\}_{\vert \ba \vert = 0}^{p}$ the orthonormal bases constructed by Equations \eqref{eq:exact_orth} and \eqref{eq:exact_orth_coeff} on the discrete measures $\nu_{S_1}(\bx)$ and $\nu_{S_2}(\bx)$, respectively. 
Inspired by cross-validation, we have chosen 
$\displaystyle \zeta_{\bm\alpha,\bm\beta} = \frac{\vert \zeta_1\vert + \vert \zeta_2\vert }{2\sqrt{2}}$
\begin{equation}
  \zeta_1 = \int \psi^{(1)}_{\bm{\alpha}}(\bm{\xi}) \psi^{(1)}_{\bm{\beta}}(\bm{\xi}) \dif\nu_{S_2}(\bx),
  \quad
  \zeta_2 = \int \psi^{(2)}_{\bm{\alpha}}(\bm{\xi}) \psi^{(2)}_{\bm{\beta}}(\bm{\xi}) \dif \nu_{S_1}(\bx).
  \label{eq:zeta_cross_validation}
\end{equation}

\begin{algorithm}
  \hrule
  \caption{Construct the near-orthonormal \black{\ac{amdP}} basis $\left\{ \psi_{\ba}(\bx)\right\}_{\vert \ba \vert = 0}^p$ on discrete sample set $S$.}
  \vspace{5pt} \hrule \vspace{5pt}
  \begin{algorithmic}[1]
    \State Collect samples of $\bx$ from sample set  $S = \left\{\bx^{(k)}\right\}_{k = 1}^{N_s}$, split $S$ equally into two disjoint subsets, i.e., $S = S_1\cup S_2$, 
    $S_1 \cap S_2 = \O $. 
    \State Given fixed monomial index order $\left\{\ba^{(l)}\right\}_{l=1}^N$, construct the orthonormal \black{\ac{amdP}} basis $\left\{\psi^{\left(1\right)}_{\ba}(\bx)\right\}_{\vert \ba \vert = 0}^{p}$ and $\left\{\psi^{\left(2\right)}_{\ba}(\bx)\right\}_{\vert \ba \vert = 0}^{p}$ on set $S_1$ and $S_2$ by \textbf{Algorithm 1}.
    \FOR{$l=1$ to $N$}
      \State Let $\ba = \ba^{(l)}$, construct $\displaystyle \psi_{\bm{\alpha}}(\bx) = f_{\bm{\alpha}}^{\bm{\alpha}} \hat{\psi}_{\ba}(\bx) - \mathop{\sum}_{\bm{\beta} \prec \bm{\alpha}} f_{\bm{\beta}}^{\bm{\alpha}} \psi_{\bm{\beta}}(\bx)$ on by Equations \eqref{eq:near_orth_condition}, \eqref{eq:coeff_transform}, and \eqref{eq:zeta_cross_validation}.
    \ENDFOR   
  \end{algorithmic}
  \hrule
  \label{alg:near_orth_discrete}
\end{algorithm}

Algorithm \ref{alg:near_orth_discrete} describes construction for a set of near-orthonormal \black{\ac{amdP}} 
basis functions on the sample set $S$. 
When applied to the sample set $S'$ to approximate $f(\bx)$, the basis shows comparable orthonormal conditions with the basis constructed by \eqref{eq:exact_orth_coeff}. 
Such results can be partially understood by the theoretical bound from Theorem~\ref{thm:boundedBOS} on the number of samples $M$ for exact recovery in orthonormal polynomial systems, $\displaystyle M \ge C_1 K^2 s \log^3(s) \log(N)$, where $\displaystyle s = \Vert \bm{c}\Vert_0$ and $\displaystyle K = \sup_{\ba}\Vert \psi_{\ba}\Vert_{\infty}$.
Theoretical analysis of the recovery error under different basis functions is out of the scope of the present work and is left for future investigation.
However, we note that the accuracy of the surrogate model $f(\bx)$ can be further improved by enhancing the sparsity of $\bm c.$
This can be achieved through the ideas presented in our previous work \cite{Lei_Yang_MMS_2015} which will be extended to general distributions below.

\begin{rem}
  We emphasize that \eqref{eq:near_orth_condition} provides a heuristic approach to construct the near-orthonormal 
\black{\ac{amdP}} basis functions $\psi_{\ba}(\bx)$ with {\color{black}a} smaller basis bound.
  In practice, \eqref{eq:near_orth_condition} can be further relaxed to
  \begin{equation}
    \begin{split}
      \arg\min_{\hat{\mathbf{f}}^{\bm\alpha}} \Vert \hat{\mathbf{f}}^{\bm\alpha}\Vert_2~~\mbox{subject to}~~ \sum_{\vert\bm\beta\vert = r, \bm\beta < \ba} \left \vert \int \psi_{\bm{\alpha}}(\bm{\xi}) \psi_{\bm{\beta}}(\bm{\xi}) \dif \nu_{S}(\bx) \right \vert^2 < \sum_{\vert\bm\beta\vert = r, \bm\beta < \ba} \zeta_{\ba,\bm\beta}^2, \\
      \left \vert \int \psi_{\bm{\alpha}}(\bm{\xi}) \psi_{\ba}(\bm{\xi}) \dif \nu_{S}(\bx) - 1 \right \vert < \zeta_{\ba,\ba},~~ r = 0,\cdots, \vert\ba\vert,
    \end{split} \label{eq:near_orth_condition_approx}
  \end{equation}
which shows similar numerical performance.
There is no theoretical guarantee yet that Equations \eqref{eq:near_orth_condition} and \eqref{eq:near_orth_condition_approx} 
yield a smaller basis bound than \eqref{eq:exact_orth_coeff} on ${S_f}$, $S$ or the entire domain of $\bx$.
We numerically compare some properties of different bases in Section~\ref{sec:basis_comparison}, which illustrate the 
performance of the near-orthonormal \black{\ac{amdP}} basis constructed above.
There may exist other numerical approaches to optimize $\psi_{\ba}(\bx)$ that can lead to an even smaller basis bound. 
\textcolor{black}{We also note that the threshold $\zeta_{\bm{\alpha},\bm{\beta}}$ is determined by directly splitting $S$ into
two disjoint sets. In practice, it is possible to design more sophisticated strategies to optimize the choice 
of $ \zeta_{\bm{\alpha},\bm{\beta}}$ and the basis construction procedure.} We leave such studies for future work.
\end{rem}

\subsection{Sparsity enhancement} \label{sec:sparsity_enhancement}

For the linear system in \eqref{eq:ape_L1}, the numerical accuracy of the recovered $\bm\tilde{c}$ via $l_1$-minimization depends on the sparsity of $\bm c$.
This dependence motivates us to develop a numerical approach to further enhance the sparsity of $\bm c$ through the variability analysis of $f(\bx)$ \cite{Lei_Yang_MMS_2015}.
If we know $f(\bx)$ explicitly, the (sorted) directions of variance in $f(\bx)$ under the distribution of $\bx$ can be found 
based on the active subspace method \cite{Russi_PHD_2001,ConstantineDW14}.
In particular, we define the gradient matrix $\mathbf{G}$ by
\begin{equation}
	\mathbf{G} = \mathbb{E} \left[\nabla f(\bx) {\nabla f(\bx)}^T\right]
\end{equation}
where $\nabla f(\bx)$ is the gradient vector defined by $\nabla f(\bx) = \left(\frac{\partial f}{\partial \xi_1}, \frac{\partial f}{\partial \xi_2}, \cdots \frac{\partial f}{\partial \xi_d}\right)^{T}$.
Eigendecomposition of $\mathbf{G}$,
\begin{subequations}
  \begin{equation}
    \mathbf{G} = \mathbf{Q} \mathbf{K} \mathbf{Q}^{T}, ~~~~~ \mathbf{Q} = \left [{\mathbf q}_1 ~\mathbf{q}_2 \cdots ~\mathbf{q}_{d} \right ],
  \end{equation}
  \begin{equation}
    \mathbf{K} = {\rm diag}(k_1, \cdots , k_{d}), ~~~k_1 \ge \cdots \ge k_d \ge 0,
  \end{equation}
\end{subequations}
yields the sorted variability directions $\mathbf{q}_1, \mathbf{q}_2, \cdots, \mathbf{q}_d$.
Accordingly, we may define a new random vector $\bm \chi$ following the sorted variability directions via linear transformation
\begin{equation}
	{\bm \chi} = \mathbf{Q}^T \bx.
	\label{eq:chi_def}
\end{equation}
{\color{black} $f(\bx) = f((\mathbf{Q}^T)^{-1}\bm\chi) = f(\mathbf{Q}\bm\chi)$  can be approximated by expansion in an orthonormal polynomial basis $\bm\chi$ with a coefficient vector $\bm c$ which is sparser than the $f(\bx)$ being expanded by orthonormal basis of $\bx$. For the remainder of this paper, we use $\mathbf{Q}$ to denote the rotation matrix to transform $\bx$ to $\bm\chi$ and $g(\bm\chi)$ to represent $f(\mathbf{Q}\bm\chi)$.}

In practice, $f(\bx)$ is usually not explicitly known.
We may numerically approximate $\mb G$ by 
\begin{equation}
  \mathbf{G} \approx \mathbb{E} \left[ \nabla \tilde{f} (\bx) {\nabla \tilde{f} (\bx)}^T\right],
  \label{eq:G_approx}  
\end{equation}
where $\tilde{f}(\bx)$ represents the approximation of $f(\bx)$ by the orthonormal polynomial basis functions $\psi_{\ba}(\bx)$ as proposed in \cite{Lei_Yang_MMS_2015} or obtained via solving \eqref{eq:ape_L1} with the data-driven basis approach (i.e., basis functions constructed with respect to an arbitrary measure) described in Section~\ref{sec:data_driven_basis}.
In particular, if $\bx$ {\color{black}is a random vector with i.i.d.\ Gaussian components, $\bm\chi$ is also 
a random vector with i.i.d.\ Gaussian components.
Thus, $\tilde{f}(\bx)$ and $\tilde{g}(\bm\chi):=\tilde{f}(\mathbf{Q} \bm\chi)$ can be 
represented by the orthonormal basis functions of the same form, e.g., tensor products of univariate 
Hermite polynomials. Without of lost of generality, from now on, we use $\tilde{g}(\bm\chi)$ to 
represent $\tilde{f}(\mathbf{Q} \bm\chi)$.} 

\emph{However, if $\rho(\bx)$  is not i.i.d.\ Gaussian, $\bm\chi$ and $\bx$ do not generally have the same distribution.
Therefore, an orthonormal polynomial basis $\psi(\cdot)$ with respect to $\bx$ \emph{cannot} be directly applied to $\bm\chi$.}
\black{The general approach presented in Section~\ref{sec:data_driven_basis} enables us to construct
the \ac{amdP} basis with respect to the probability measure of the rotated vector $\bm\chi$.}
The two orthonormal bases associated with $\bx$ and $\bm\chi$ respectively are related to each other via a unitary 
transformation as shown in Theorem~\ref{thm:orth_basis}.
In particular, if $\rho(\bx)$ is implicitly described by a sample set $S = \left\{\bx^{(k)}\right\}_{k = 1}^{N_s}$, $\mb G$ can be easily evaluated by representing $\psi_{\ba}(\bx)$ via the monomial basis, i.e., $\displaystyle \psi_{\bm\alpha}(\bx) = \sum_{\bm\beta \preceq \bm\alpha} \hat{f}_{\bm\beta}^{\bm\alpha} \hat{\psi}_{\bm\beta}(\bx)$ via Equation \eqref{eq:coeff_transform} and then integrating with discrete measure $\nu_S$. 
By transforming $S$ and $S_f$ into $\left\{{\bm\chi}^{(k)}\right\}_{k = 1}^{N_s}$ and $\left\{{\bm\chi'}^{(k)}\right\}_{k = 1}^{M}$, the orthonormal and near-orthonormal \black{\ac{amdP}} basis functions with respect to $\bm\chi$ can be constructed by Eqs.~\eqref{eq:exact_orth_coeff} \eqref{eq:near_orth_condition_approx}.
The surrogate model $\tilde{g}(\bm\chi)$ can then be constructed by solving \eqref{eq:ape_L1}.

The entire \ac{DSRAR} procedure is presented in \textbf{Algorithm 4}. 
Compared with $\tilde{f}(\bm\xi)$, $\tilde{g}({\bm\chi})$ shows
smaller numerical error in general. The additional cost of sparsity 
enhancement procedure in Step 4 - 6 is less than $0.6$ CPU (3.7 GHz Quad-Core 
Intel Xeon E5) hour for the numerical examples considered in this study. 
For realistic applications, the overhead of Step 4 - 6 could be 
relatively small if sampling of QoI is expensive or the available training set is limited. 

\begin{algorithm}
  \hrule
  \caption{\ac{DSRAR}: Surrogate model construction with discrete sample set $S$ and training set $S_f$.}
  \vspace{5pt} \hrule \vspace{5pt}
  \begin{algorithmic}[1]
    \State Collect the sample set within the random space $S = \left\{\bx^{(k)}\right\}_{k = 1}^{N_s}$.
    \State Generate evaluations of $f$ on training set $S_f = \left\{{\bx'}^{(k)}\right\}_{k = 1}^{M}$ with $M$ outputs $f_1, f_2, \cdots, f_M$.
    \State Construct the data-driven \black{\ac{amdP}} basis $\left\{ \psi_{i}(\bx)\right\}_{i = 1}^N$ on discrete measure $\nu_S(\bx)$ as the exact orthonormal basis by Algorithm~\ref{alg:orth_discrete} or the near orthonormal basis by Algorithm~\ref{alg:near_orth_discrete}.
    \State Evaluate the measurement matrix ${\bm A}_{ij} = \psi_{j}({\bx'}^{(i)})$, $1\le i\le M$, $1\le j \le N$; construct surrogate model $\tilde{f}(\bx) = \displaystyle \sum_{\vert \alpha\vert = 0}^{p} c_{\bm \alpha} \psi_{\bm \alpha}(\bx)$ by solving the $l_1$-minimization problem.
    \State Evaluate the gradient matrix $ \mathbf{G} \approx \mathbb{E}\left[\nabla \tilde{f}(\bx) \nabla \tilde{f} (\bx)^T \right ]$ on $\nu_{S}(\bx)$.
    Find the eigendecomposition $\mathbf{G} = \mathbf{Q} \mathbf{K}\mathbf{Q}^T$, define sample set $\left\{\bm{\chi}^{(k)}\right\}_{k=1}^{N_s}$ and training set $\left\{{\bm \chi'}^{(k)}\right\}_{k=1}^{M}$ by $\bm{\chi}^{(k)} = \mathbf{Q}^T \bx^{(k)}$, $\bm{\chi'}^{(k)} = \mathbf{Q}^T {\bx'}^{(k)}$.
    \State Reconstruct the data-driven \black{amdP} basis $\left\{ \phi_{\ba}(\bm\chi)\right\}_{\vert \ba \vert = 0}^p$ by Algorithm~\ref{alg:near_orth_discrete} and surrogate model $\tilde{g}(\bm\chi)$ with enhanced sparsity following Step 3 and Step 4.
  \end{algorithmic}
  \hrule
  \label{alg:data_driven}
\end{algorithm}

The \ac{DSRAR} framework described above is also applicable to systems with 
standard density distributions, where $\rho(\bx)$ is known explicitly.
Without loss of generality, we assume that an orthonormal polynomial basis $\left\{ \psi_{\ba}(\bx)\right\}_{\vert \ba \vert = 0}^p$ is known.
Evaluation of $\mb G$ by \eqref{eq:G_approx} on $\rho(\bx)$ is straightforward.
The surrogate model of $f$ can be constructed via $l_1$ minimization with enhanced sparsity through Algorithm~\ref{alg:explicit_density}.

\begin{algorithm}
  \hrule
  \caption{\ac{DSRAR}: Surrogate model construction with training set $S_f$ and probability measure $\rho(\bx)$.}
  \vspace{5pt} \hrule \vspace{5pt}
  \begin{algorithmic}[1]
    \State Evaluate $f$ on training set $S_f = \left\{{\bx'}^{(k)}\right\}_{k = 1}^{M}$ with $M$ outputs $f_1, f_2, \cdots, f_M$.
    \State Evaluate the measurement matrix ${\bm A}_{ij} = \psi_{j}({\bx'}^{(i)})$, $1\le i\le M$, $1\le j \le N$ ; construct surrogate model $\tilde{f}(\bx) = \displaystyle \sum_{\vert \alpha\vert = 0}^{p} c_{\bm \alpha} \psi_{\bm \alpha}(\bx)$ by solving $l_1$ minimization problem.
    \State Evaluate the gradient matrix $ \mathbf{G}  = \mathbb{E}\left[\nabla \tilde{f}(\bx) \nabla \tilde{f} (\bx)^T \right ]$ on $\rho(\bx)$.
    Conduct eigendecomposition $\mathbf{G} = \mathbf{Q} \mathbf{K}\mathbf{Q}^T$ and define training set $\left\{\bm{\chi'}^{(k)}\right\}_{k=1}^{M}$, $\bm{\chi'}^{(k)} = \mathbf{Q}^T {\bx'}^{(k)}$.
    \State Re-construct the orthonormal \black{\ac{amdP}} basis $\left\{ \phi_{\ba}(\bm\chi)\right\}_{\vert \ba \vert = 0}^p$ with respect to $\rho'(\bm\chi)$ by \textbf{Algorithm 2}.
    Construct the surrogate model $\tilde{g}(\bm\chi)$ with enhanced sparsity following Step 3.
  \end{algorithmic}
  \hrule
  \label{alg:explicit_density}
\end{algorithm}

The procedures for random vector rotation and surrogate construction presented in  Algorithms \ref{alg:data_driven} and \ref{alg:explicit_density} can be conducted in an iterative manner.
We have investigated this issue \cite{Yang_Lei_JCP_2016} by applying a previously developed rotation procedure \cite{Lei_Yang_MMS_2015} successively to systems with underlying Gaussian distributions.
For the systems studied in the present work, the improvement of the numerical accuracy is marginal after the first rotation procedure.
Therefore, the numerical results with only one rotation procedure will be presented in this manuscript.

\section{Results} \label{sec:numerical}

This section presents the numerical results of the present \ac{DSRAR} framework for surrogate model construction with 
arbitrary underlying distributions. For numerical examples where the probability measure $\rho(\bx)$ (with density function 
$\omega(\bx)$) is not known explicitly and is represented by a discrete data 
set $S= \left\{\bx^{(k)}\right\}_{k = 1}^{N_s}$, we split $S$ equally into two subsets $S=S_1\cup S_2$.
\textcolor{black}{We use $S_1$ to construct the data-driven \black{\ac{amdP}} basis and split $S_2$ into two disjoint subset 
$S_2 = S_{2,1}\cup S_{2,2}$, where $S_{2,1}$ is the training set for surrogate model construction and $S_{2,2}$ is the test
set to evaluate the accuracy of the constructed surrogate model. The size of the training set is $O(10^2) - O(10^3)$ 
and size of the test set is $O(10^5)$.}

\subsection{Accurate recovery of linear systems with data-driven bases} \label{sec:basis_comparison}

In this test, we collected a sample set $S = \left\{\bx^{(k)}\right\}_{k = 1}^{N_s}$ with $N_s = 2\times10^5$.
The random vector $\bx$ followed the Gaussian mixture distribution 
\begin{equation}
  \omega (\bx) = \sum_{i=1}^{N_m} a_i \mathcal{N}(\bm{\mu_i,\Sigma_i})
  \label{eq:GM_test_set}
\end{equation}
where $N_m$ is the number of Gaussian modes.
We set $N_m = 3$, $a_i>0$ for $i=1,2,3$ and $\sum_{i=1}^3 a_i = 1$.
For each Gaussian mode, $\bm\mu_i$ is a 25-dimensional i.i.d.\ random vector with uniform distribution $\mathcal{U}[-2.5,2.5]$ on each dimension and then shifted such that $\sum_{i=1}^3 a_i \bm\mu_i = 0$.
The matrices $\bm \Sigma_i$ were chosen such that 
\begin{equation}
  \bm\Sigma_i = (\bm \Upsilon_i {\bm \Upsilon_1}^T + \mathbf{I})/4,
  \label{eq:GM_var}
\end{equation}
where $\bm\Upsilon_i$ is a random matrix with i.i.d.\ entries from $\mathcal{U}[0,1]$ for $i=1,2,3$.

We considered a linear system \[{\bm A} \bm c = {\bm b} + \bm\ve \] and recovered $\bm c$ \textcolor{black}{using $M$ training points} 
by solving the $l_1$ minimization problem defined by \eqref{eq:ape_L1} where 
\begin{equation}
\left[ {\bm A} \right]_{i,j} = \psi_{j}(\bx^{(i)}), \quad b_i = \sum_{k=1}^N c_{k} \psi_{k}(\bx^{(i)}), 
\label{eq:matA}
\end{equation}
with $1\le i \le M$, $1\le j \le N$, and $\bm \ve$ is noise with $\|\bm \ve\|_2\leq 10^{-7}$. 
We set $d = 25$, $p = 2$ and $N =  \left( \begin{array}{c} d+p \\ p\end{array}\right) = 351$. 
The basis functions $\psi_{\ba}(\bx)$ were constructed on the set $S_1$ by the following approaches: 
\begin{enumerate}
  \item the orthonormal \black{\ac{amdP}} basis subject to Equations \eqref{eq:exact_orth} and \eqref{eq:exact_orth_coeff}; 
  \item the near-orthonormal \black{\ac{amdP}} basis subject to Equation \eqref{eq:near_orth_condition};
  \item tensor product of univariate normalized Legendre polynomials (both sampling points and training points are scaled to lie in $[-1, 1]$ on each dimension accordingly). 
\end{enumerate}
Training points from set $S_2$ were used to examine the recovery accuracy of $\bm c$.

\subsubsection{Sparse linear systems}
\label{sec:recover_sparse}
First, we considered the scenario where $\bm c$ is a $s$-sparse vector and employed the following theoretical bound to examine the recovery accuracy via $l_1$-minimization.

\begin{thm}
  Given a matrix $\bm{{\Psi}} \in \mathbb{R}^{M\times N}$ and set $T_{\ba}$ with $s = \vert T_{\ba} \vert$, a $s$-sparse vector $\bm c$ with non-zero entries on $T_{\ba}$ can be exactly recovered via $l_1$-minimization if $\frac{\theta_s}{1-\delta_s} < 0.5$, where $\delta_s$ and $\theta_s$ are defined by
  \begin{equation}
    \begin{split}
      &\delta_s := \inf \left[ \delta :  (1-\delta)\|\bm y\|_2^2 \le \|\bm\Psi_t \bm y\|_2^2 \le (1+\delta)\|\bm y\|_2^2 \right] ,\  \forall t \subseteq \mathbf{T}, \forall \bm y \in \mathbb{R}^{|t|} \\
      &\theta_s := \inf \left[ \theta : \vert \langle \bm\Psi_{t'} {\bm y}', \bm\Psi_t \bm y\rangle \vert \le \theta \Vert {\bm y}'\Vert_2 \Vert \bm y\Vert_2 \right] ,\  \forall t \subseteq \mathbf{T}, t'\nsubseteq \mathbf{T}, |t'| \le s, \forall \bm y \in \mathbb{R}^{|t|}, {\bm y}' \in \mathbb{R}^{|t'|}
    \end{split}
    \label{eq:delta_s_theta_s}
  \end{equation} \label{thm:exact_recovery}
   where $\bm\Psi_{t}$ and $\bm\Psi_{t'}$ denote the sub-matrices of $\bm\Psi$ with column indices in $t$ and $t'$ respectively.
\end{thm}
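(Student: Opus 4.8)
The plan is to argue by the standard null-space/cone route and show that the hypothesis $\theta_s/(1-\delta_s)<1/2$ forces the unique $\ell_1$ minimizer to coincide with $\bm c$. Write $T:=T_{\ba}$, let $\tilde{\bm c}$ be any minimizer of $\|\cdot\|_1$ subject to $\bm\Psi\tilde{\bm c}=\bm\Psi\bm c$, and put $\bm h=\tilde{\bm c}-\bm c$, so that $\bm\Psi\bm h=0$. First I would extract the cone condition from optimality: splitting the $\ell_1$ norm over $T$ and $T^c$ and using $\bm c_{T^c}=0$, the chain $\|\bm c\|_1\ge\|\bm c+\bm h\|_1\ge\|\bm c_T\|_1-\|\bm h_T\|_2\!\cdot\!\text{(on $T$)}+\|\bm h_{T^c}\|_1$ collapses to $\|\bm h_{T^c}\|_1\le\|\bm h_T\|_1$. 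This reduces the whole theorem to proving $\bm h_T=0$, since the cone inequality then propagates $\bm h_T=0$ to $\bm h_{T^c}=0$.

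Next I would set up the block decomposition of the off-support part. Sort the entries of $\bm h$ on $T^c$ by decreasing magnitude and partition $T^c$ into consecutive size-$s$ blocks $T_1,T_2,\dots$ (the last possibly smaller), with $T_1$ holding the largest entries. The elementary tail estimate $\|\bm h_{T_j}\|_2\le s^{-1/2}\|\bm h_{T_{j-1}}\|_1$ for $j\ge2$ sums to $\sum_{j\ge2}\|\bm h_{T_j}\|_2\le s^{-1/2}\|\bm h_{T^c}\|_1$; feeding in the cone condition and $\|\bm h_T\|_1\le\sqrt{s}\,\|\bm h_T\|_2$ yields the clean bound $\sum_{j\ge2}\|\bm h_{T_j}\|_2\le\|\bm h_T\|_2$ with constant exactly one. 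Each block has cardinality at most $s$, so $\delta_s$ controls $\|\bm\Psi\bm h_{T_j}\|_2^2$ against $\|\bm h_{T_j}\|_2^2$ on any single block, and $\theta_s$ (from Eq.~\eqref{eq:delta_s_theta_s}) bounds the inner product of $\bm\Psi$ applied to two disjoint blocks.

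The heart of the proof is two energy estimates obtained from $\langle\bm\Psi\bm h_S,\bm\Psi\bm h\rangle=0$, which holds for every index set $S$ because $\bm\Psi\bm h=0$. Taking $S=T$ and using $\bm\Psi\bm h_T=-\sum_{j\ge1}\bm\Psi\bm h_{T_j}$, the isometry lower bound $(1-\delta_s)\|\bm h_T\|_2^2\le\|\bm\Psi\bm h_T\|_2^2$ together with the restricted-orthogonality bounds on the cross terms and the tail estimate gives $(1-\delta_s)\|\bm h_T\|_2\le\theta_s\|\bm h_{T_1}\|_2+\theta_s\|\bm h_T\|_2$. Repeating the identical manipulation with $S=T_1$ gives the companion bound $(1-\delta_s)\|\bm h_{T_1}\|_2\le 2\theta_s\|\bm h_T\|_2$. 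Substituting the second into the first and writing $r=\theta_s/(1-\delta_s)$ reduces everything, under the assumption $\bm h_T\neq0$, to the scalar inequality $1\le 2r^2+r$, i.e. $(2r-1)(r+1)\ge0$, which forces $r\ge\tfrac12$ and contradicts the hypothesis. Hence $\bm h_T=0$, and the cone condition yields $\bm h=0$, proving uniqueness and exact recovery.

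I expect the main obstacle to be the leading off-support block $T_1$: the cone condition controls only the $\ell_1$ mass of $\bm h_{T^c}$ and does \emph{not} by itself bound $\|\bm h_{T_1}\|_2$ in terms of $\|\bm h_T\|_2$, so the single estimate from $S=T$ fails to close. The device that resolves this is the second estimate with $S=T_1$, which requires the near-isometry of $\bm\Psi$ on the size-$s$ set $T_1$ and the cross-orthogonality against the remaining blocks; this is the step where the factor $2$ enters and where the two estimates interlock to produce exactly the sharp threshold $\tfrac12$. I would also take care to read $\delta_s$ and $\theta_s$ as controlling isometry on, and orthogonality between, arbitrary index sets of size at most $s$ (not only subsets of the true support), since the $S=T_1$ estimate uses isometry off the support; and I would track the tail constant so that $\sum_{j\ge2}\|\bm h_{T_j}\|_2\le\|\bm h_T\|_2$ holds with constant one, as any looser constant would blunt the final quadratic and spoil the $\tfrac12$ bound.
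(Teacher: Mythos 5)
Your overall architecture (null--space/cone condition, size-$s$ blocking of $T^c$ sorted by magnitude, tail estimate with constant one, restricted isometry plus restricted orthogonality) is the right family of argument, and your first energy estimate with $S=T$ is exactly the one the paper uses: it needs only the lower isometry bound on $t=T\subseteq\mathbf{T}$ and the cross terms $\langle\bm\Psi\bm h_T,\bm\Psi\bm h_{T_j}\rangle$ with $t=T$, $t'=T_j\not\subseteq\mathbf{T}$, both of which are licensed by Eq.~\eqref{eq:delta_s_theta_s}. The genuine gap is your second, interlocking estimate with $S=T_1$. That step requires $(1-\delta_s)\|\bm h_{T_1}\|_2^2\le\|\bm\Psi_{T_1}\bm h_{T_1}\|_2^2$ for the off-support block $T_1\subseteq T^c$, and it requires $\theta_s$ to control $\langle\bm\Psi_{T_1}\cdot,\bm\Psi_{T_j}\cdot\rangle$ for $j\ge 2$, i.e.\ between \emph{two} sets neither of which lies in $\mathbf{T}$. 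Neither is provided by the definitions in Eq.~\eqref{eq:delta_s_theta_s}, which deliberately localize $\delta_s$ to $t\subseteq\mathbf{T}$ and $\theta_s$ to pairs with $t\subseteq\mathbf{T}$, $t'\not\subseteq\mathbf{T}$; the accompanying remark in the appendix stresses that the theorem is support-specific for exactly this reason. You flagged this yourself and proposed to ``read'' the constants as global Cand\`es--Tao-type constants, but that is not a reading choice: it replaces the stated hypotheses by strictly stronger ones, so what you prove is a different (weaker) theorem.

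The paper closes the argument without ever leaving the support: from $\bm\Psi\bm v=0$ it expands $\|\bm\Psi\bm v_T\|_2^2=-\sum_{k\ge1}\langle\bm\Psi\bm v_T,\bm\Psi\bm v_{T_k^c}\rangle$, obtains $\|\bm v_T\|_2\le\frac{\theta_s}{1-\delta_s}\sum_{k\ge1}\|\bm v_{T_k^c}\|_2$ using only the localized constants, and then converts to $\ell_1$ norms via Cauchy--Schwarz and the tail estimate to reach $\|\bm v_T\|_1\le\frac{\theta_s}{1-\delta_s}\left(\|\bm v_T\|_1+\|\bm v_{T^c}\|_1\right)$, whence the null space property $\|\bm v_T\|_1<\|\bm v_{T^c}\|_1$ follows directly from $\frac{\theta_s}{1-\delta_s}<\tfrac12$ --- a single linear inequality rather than your quadratic $1\le 2r^2+r$ (whose root happens to land at the same threshold $\tfrac12$). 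To repair your proposal within the stated hypotheses you would need to eliminate the $S=T_1$ estimate and instead absorb the leading block $T_1$ into the single $S=T$ estimate as the paper does, keeping $T$ as the anchor of every isometry and orthogonality bound.
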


Theorem~\ref{thm:exact_recovery} (see \ref{app:exact_recovery} for proof) provides a sufficient condition to exactly 
recover $\bm c$ with non-zero entries on index set $T_{\ba}$. For numerical study, we randomly chose an index set $T_{\ba}$  
from $\Lambda^d_{p}$ with $\vert T_{\ba}\vert = 3$, where $\Lambda_p^d$ is defined by \eqref{eq:f_ap_expan}.
For each training set, we constructed the measurement matrix $\bm A$ with different 
bases and computed $\theta_s/\left(1-\delta_s\right)$ by \eqref{eq:delta_s_theta_s}.
Figure~\ref{fig:err_sparse_vector}(a) shows the mean value $\mathbb{E}\left[\theta_s/\left(1-\delta_s\right)\right]$ on $200$ independent
training sets chosen from $S_2$ for each $M$. 
The exact and near-orthonormal bases yield similar results: $\mathbb{E}\left[\theta_s/\left(1-\delta_s\right)\right]$ becomes smaller than $0.5$ as $M$ approaches $210$,  which is also shown in the inset plot of Figure~\ref{fig:err_sparse_vector}(a).
In contrast, $\mathbb{E}\left[\theta_s/\left(1-\delta_s\right)\right]$ obtained from Legendre polynomial basis shows worse performance due to the loss of orthonormality.

\begin{figure}[tbp]
  \center
  \subfigure[]{\includegraphics*[scale=0.25]{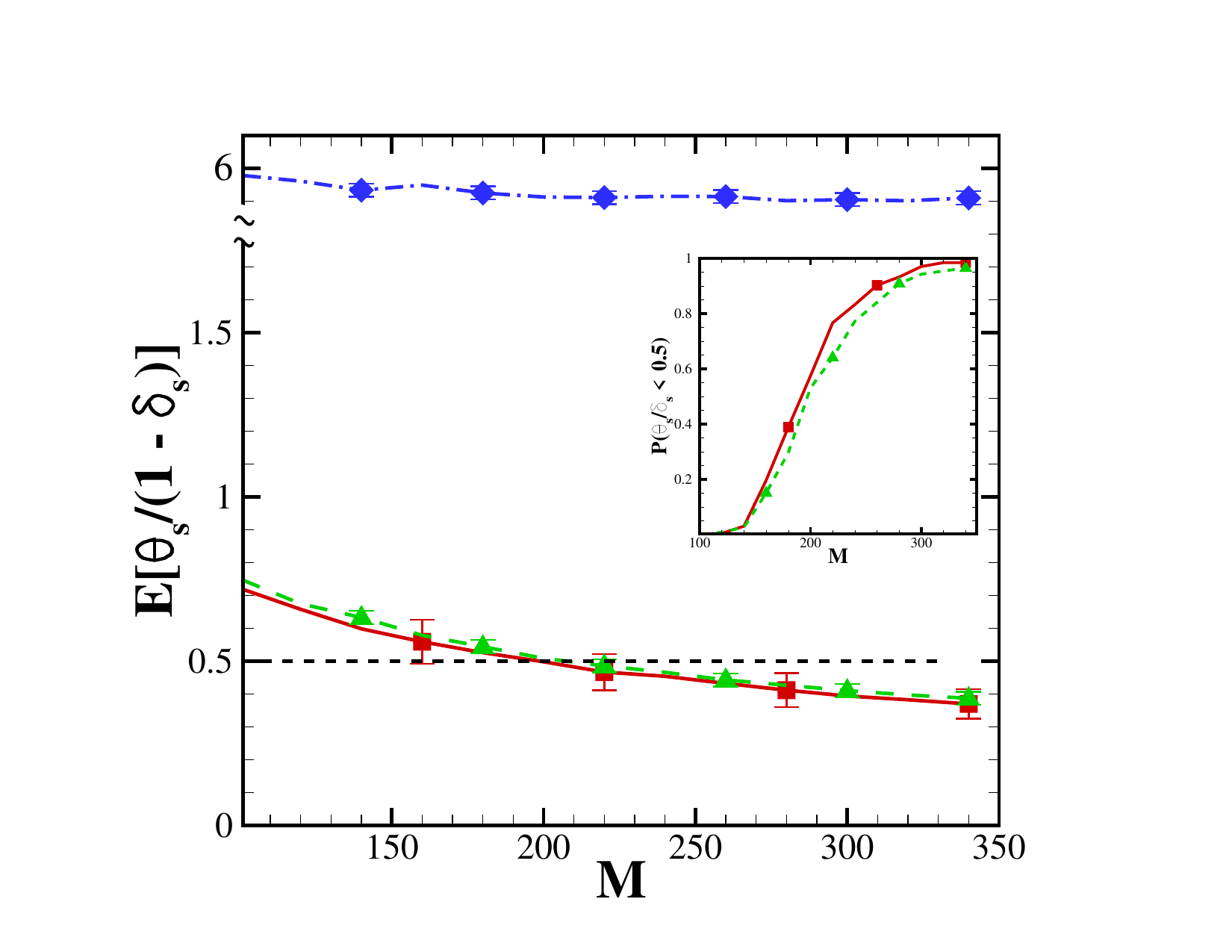}}
  \subfigure[]{\includegraphics*[scale=0.25]{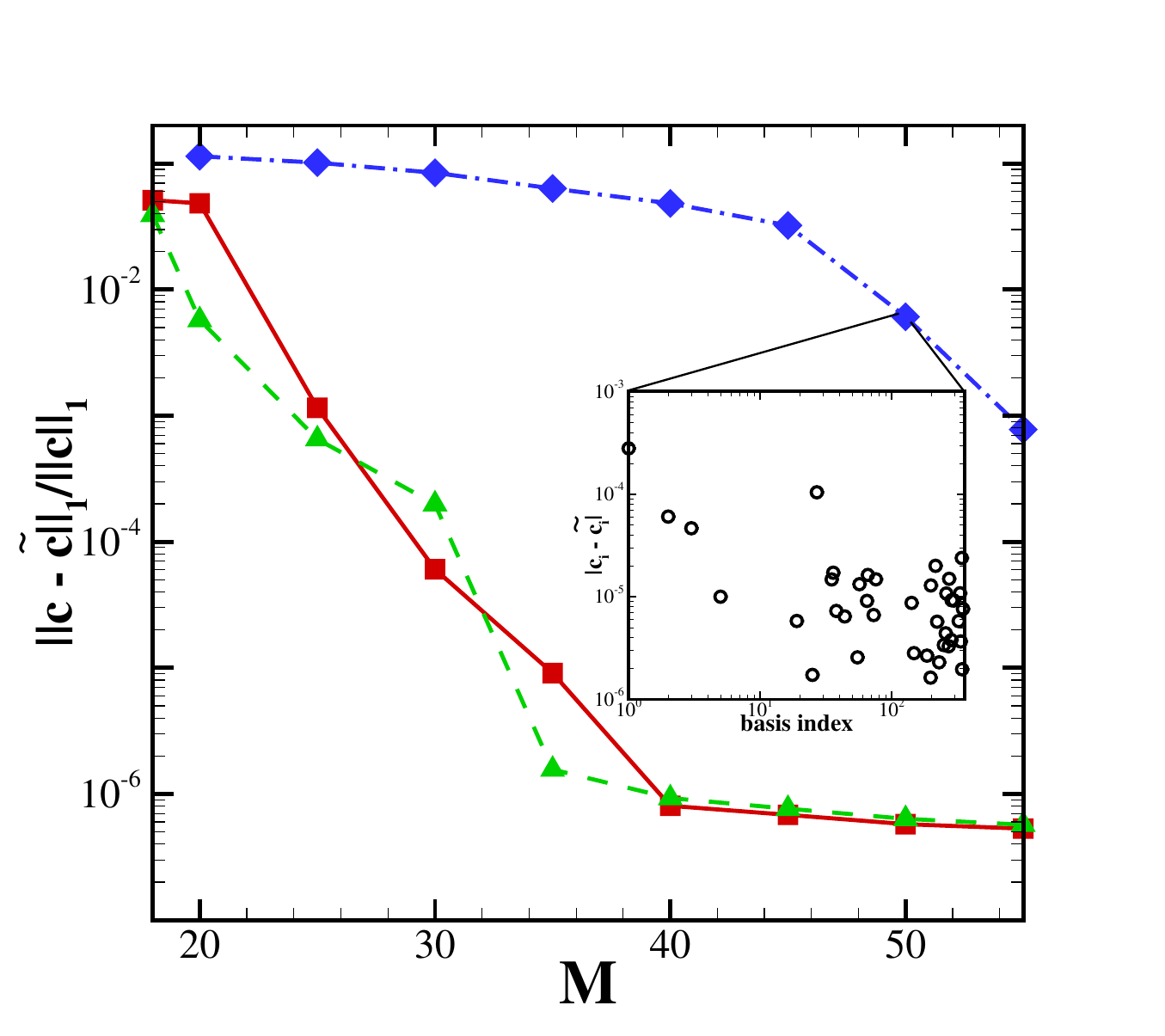}}
  \caption{The measurement matrices constructed by the exact and near-orthonormal bases exhibit similar performance
  in the theoretical (sufficient) bound and numerical results for  recovery of sparse vector. 
   Both bases outperform the Legendre basis.
  ``\textcolor{red}{\protect\rectanglesolidline}'': the exact orthonormal \black{\ac{amdP}} basis;  
  ``\textcolor{green}{\protect\triangledashline}'': the near-orthonormal \black{\ac{amdP}} basis;  
  ``\textcolor{blue}{\protect\diamonddashdotline}'': Legendre basis.
  (a) Mean value of the theoretical bound $\mathbb{E}\left[\theta_s/\left(1-\delta_s\right)\right]$ of exact recovery 
  for measurement matrices $\bm A$ constructed by various bases for the chosen non-zero index $T_{\ba}$ with $s = 3$.
  \black{The error bar represents the standard deviation.}
  The inset plot shows the theoretical prediction of the exact recovery probability.
  (b) Relative $l_1$ error of the recovered sparse vector ($s = 5$) using different training set size $M$.
  The inset plot shows the recovery error $\Vert \bm c - \tilde{\bm c}\Vert_1$ of one training set for the Legendre basis system.} \label{fig:err_sparse_vector}
\end{figure}

In our numerical experiments, we were able to recover $\bm c$ using fewer samples
than the number {\color{black} $M$---as suggested by the sufficient condition (Theorem 2.5) originally given} by Rauhut
\cite{Rauhut_2012sparseLegen}---since this number is based on the worst case scenario and is not, in general, a sharp bound.
Figure~\ref{fig:err_sparse_vector}(b) shows the numerical results of a test case with $\bm c_{T_{\ba}} = 1$, $\bm c_{T_{\ba}^c} = 0$, $\vert T_{\ba} \vert = 5$. 
For each $M$, $200$ CS implementations were conducted to compute the average of the relative error $\Vert \bm c - \tilde{\bm c}\Vert_1/ \Vert \bm c \Vert_1$.
The exact and near-orthonormal \black{\ac{amdP}} bases show similar performance, where $\bm c$ can be accurately recovered (up to $\Vert\bm \varepsilon \Vert_2$) using $M = 45$ training points.
In contrast, the Legendre basis yields larger relative error in $\ell_1$-norm. The relative error of the recovered coefficients from one CS implementation with Legendre basis is shown in the inset plot of Figure~\ref{fig:err_sparse_vector}(b).

\subsubsection{{Non-sparse} linear systems}
\label{sec:recover_dense}
We also tested the recovery performance when the exact representation is not sparse.
The vector $\bm c$ is chosen with a random non-zero index set $T_{\ba}$ with $\vert T_{\ba}\vert = 120$.
Individual components of $\bm c_{T_{\ba}}$ are i.i.d.\ log-normal, such that $\log \bm c_{T_{\ba}} \sim \mathcal{N}(0,2)$.
For each size ($M$) of the training set, $200$ CS implementations were conducted to compute the average of the numerical error $\Vert \bm c - \tilde{\bm c}\Vert_2$, as shown in Figure~\ref{fig:err_no_sparse_vector}(a).
Similar to the previous example, the Legendre basis exhibits the largest approximation error.
The near-orthonormal basis shows smaller error than the exact orthonormal basis.

\begin{figure}[tbp]
  \center
  \subfigure[]{\includegraphics*[scale=0.25]{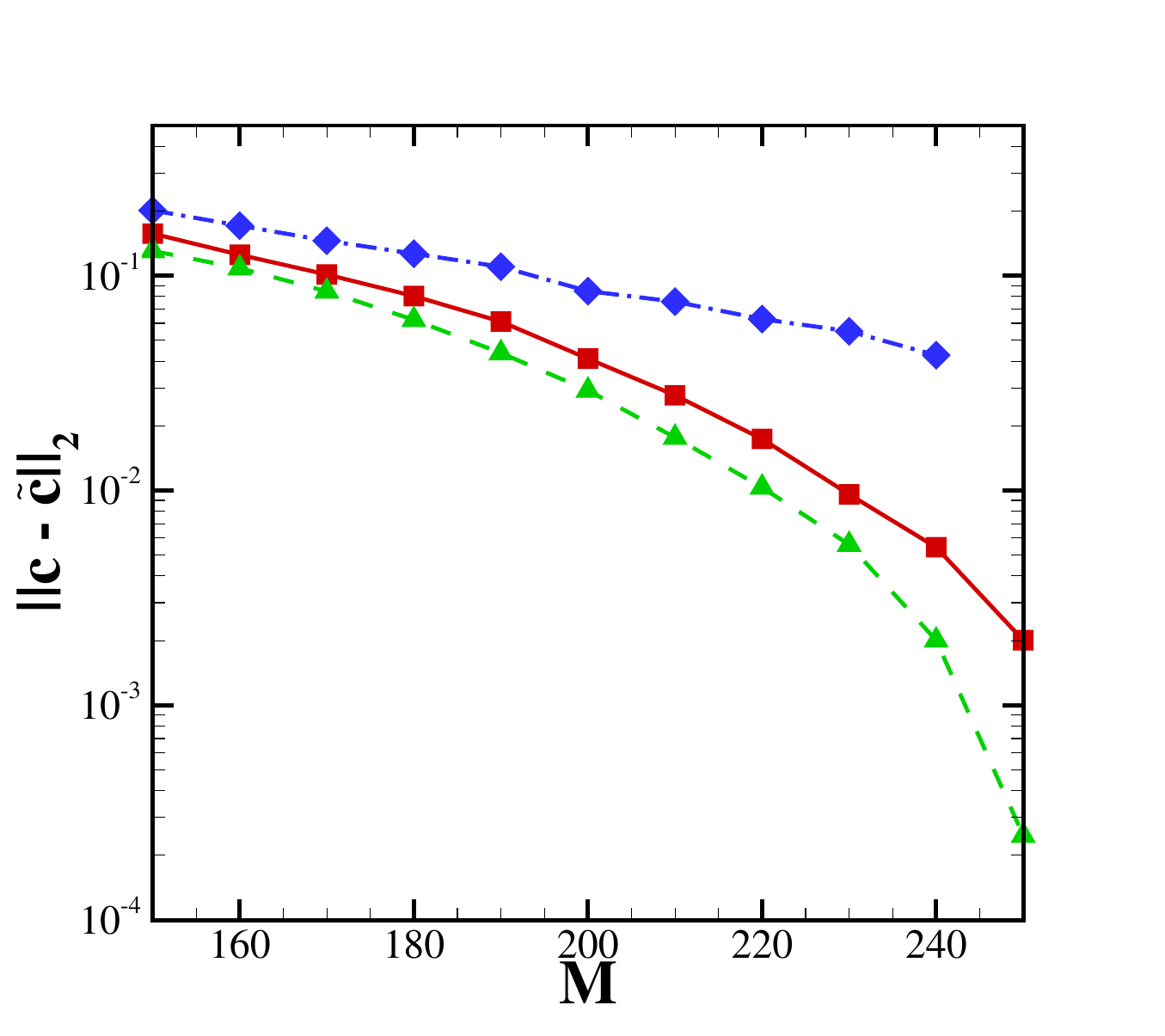}}
  \subfigure[]{\includegraphics*[scale=0.25]{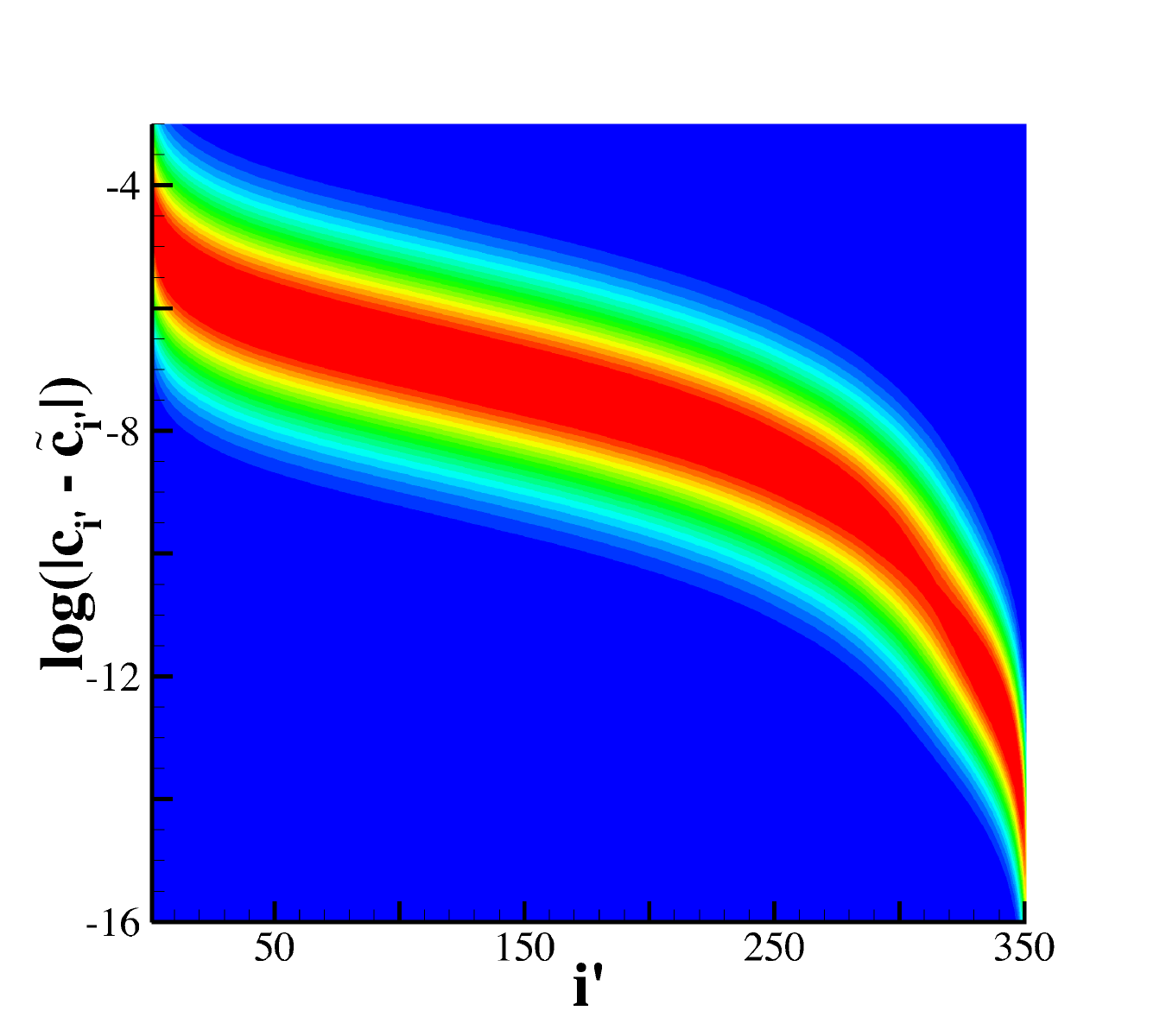}}\\
  \subfigure[]{\includegraphics*[scale=0.25]{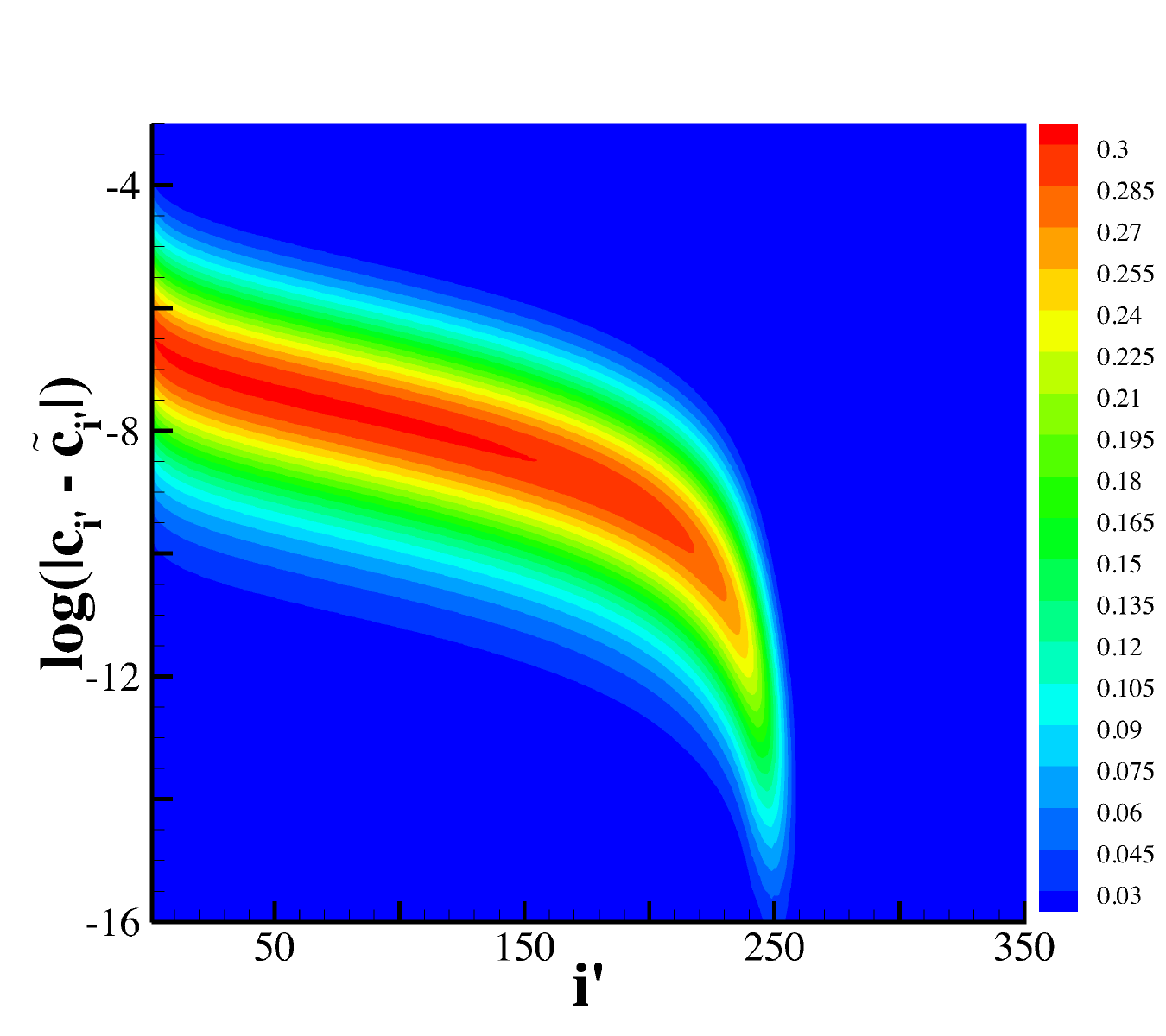}}
  \subfigure[]{\includegraphics*[scale=0.25]{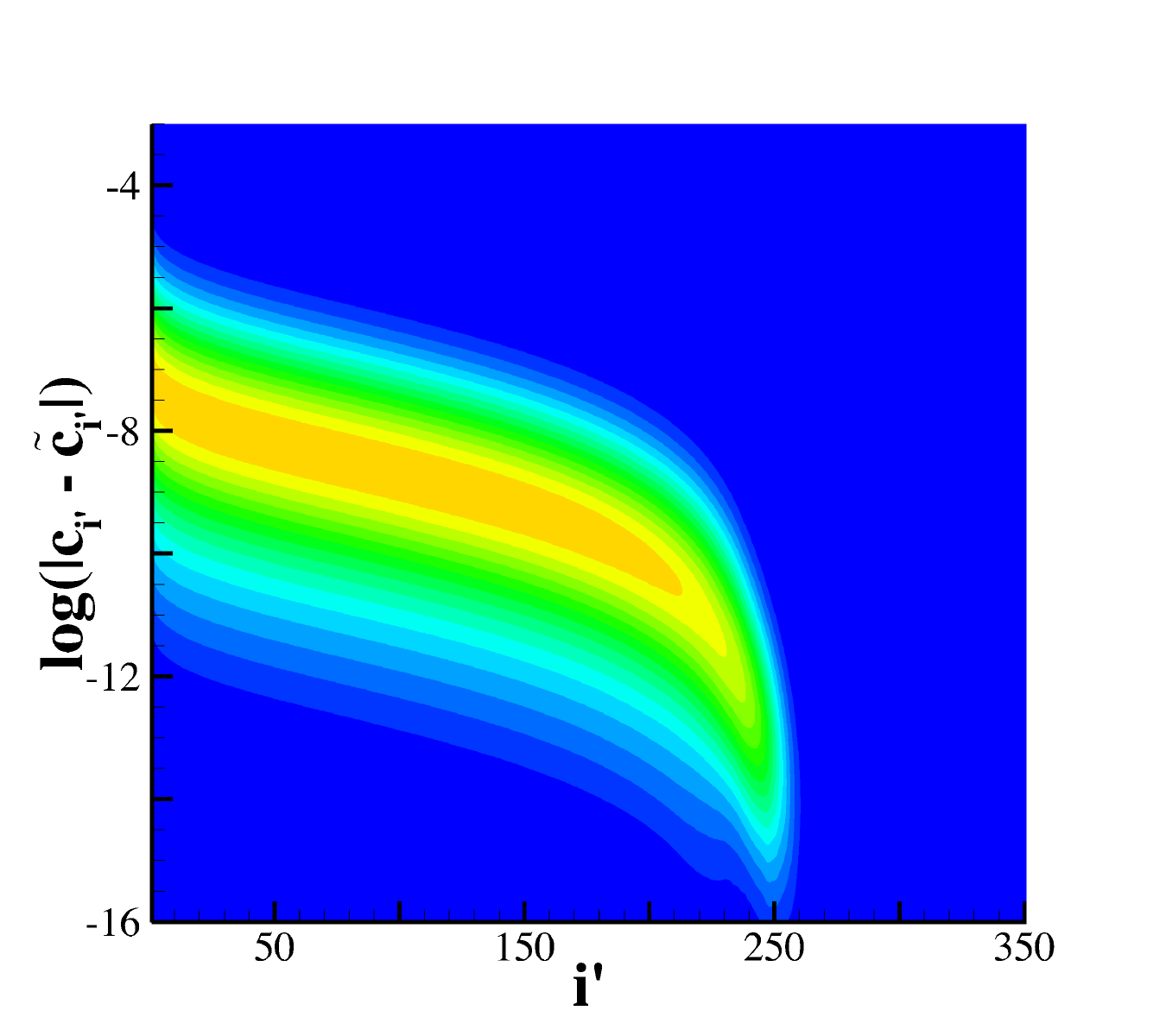}}
  \caption{The measurement matrices constructed by different bases show 
    different numerical performance for the recovery
   of non-sparse vector. The near-orthonormal basis shows the most accurate result.
  (a) $l_1$ error of the recovered vector $\mathbf{c}$ with different bases. 
    ``\textcolor{red}{\protect\rectanglesolidline}'': the exact orthonormal \black{\ac{amdP}} basis;  
  ``\textcolor{green}{\protect\triangledashline}'': the near-orthonormal \black{\ac{amdP}} basis;  
  ``\textcolor{blue}{\protect\diamonddashdotline}'': Legendre basis.
  (b-d) Contours of $\vert \mathbf{c}_{\bm{\alpha}'} -\tilde{\mathbf{c}}_{\bm{\alpha}'}\vert$ (sorted by magnitude) from training
  sets of size $M = 230$ with Legendre (top right), orthonormal (bottom left) and near-orthonormal bases (bottom right).}
\label{fig:err_no_sparse_vector}
\end{figure}

We also computed the density distribution of individual component $\vert \bm c_{i'} - \tilde{\bm c}_{i'}\vert$, where $i'$ 
refers to single index sorted by the magnitude in descending order.
Figure~\ref{fig:err_no_sparse_vector}(b-d) shows that, compared with the exact orthonormal basis and the Legendre basis, the 
distribution of $\log\left \vert \bm c_{i'} - \tilde{\bm c}_{i'}\right\vert$ obtained from near-orthonormal basis is 
biased toward the smallest magnitudes for error of individual $i'$. This result can be interpreted as that the 
average of $\Vert \bm c - \tilde{\bm c}\Vert_2$ of the near orthogonal basis is smaller than that of the exact 
orthogonal basis and also outperforms the Legendre basis.

\subsection{Systems with explicit knowledge of density function}
\label{sec:explicit_density}
In this subsection, we demonstrate the proposed method in systems with common non-Gaussian 
randomness with analytical density function $\omega(\bx)$. We show that the present method 
based on orthonormal basis construction and rotation of the random variables exploits 
the sparser representation of QoI while retaining proper orthogonality with respect 
to rotated variables. Therefore, it yields more accurate surrogate models than other approaches
based on the direct recovery of $\bm c$ without the sparsity enhancement rotation procedure 
and/or directly applying the rotation procedure without reconstruction of
the orthonormal \black{\ac{amdP}} basis.

\subsubsection{High-dimensional polynomial}
For the first numerical example, we consider a high-dimensional polynomial function 
\begin{equation}
f(\bx) = \sum_{ \vert \ba \vert \le 3} \hat{c}_{\ba} \hat{\psi}_{\ba}(\bx) 
=   \sum_{i = 1}^N \frac{\eta_i}{\vert i \vert^{1.5}} \hat{\psi}_{i}(\bx),
\label{eq:poly_dim_20_p_3}  
\end{equation}
where $\hat{\psi}_{\ba}$ and $\hat{\psi}_{i}$ represent monomial basis functions, $\eta_i$ 
represents uniform random variables $\mathcal{U}\left[0,1\right].$  {We employed
this polynomial function with sparse coefficients as a benchmark problem to examine the recovery
accuracy of the present method.} $\bx$ is a random vector consisting of $20$ i.i.d.\ random variables. 
The density function of the $i\mhyphen$th variable $\xi_i$ is given by 
\begin{equation}
\omega(\xi_i) = e^{-\xi_i},
\label{eq:dens_exp}  
\end{equation}
where the corresponding orthonormal basis are given by the Laguerre polynomials. Accordingly, we construct
a \nth{3}-order polynomial expansion $\tilde{f}(\bx)$ with $N = 1771$ multivariate basis functions,  
which are the tensor product of the univariate Laguerre polynomials. Figure \ref{fig:l2_lauguerre} 
shows the relative $l_2$ error of $\tilde{f}$ computed by level $4$ sparse grid integration.
Similar to the previous example, the \ac{PDF} of $\bm\chi$ does not
retain the form $\omega'(\bm\chi) = \prod_{i=1}^d \exp(-\chi_i)$ after the rotation. Iteratively employing the 
multivariate Laguerre polynomials to represent $\tilde{g}(\bm\chi)$ may result in erroneous prediction (the red dash-dotted curve).
Alternatively, such a problem can be addressed by using the reconstructed orthonormal \black{\ac{amdP}} basis
with respect to $\bm\chi$, which yields a smaller error than $\tilde{f}(\bx)$ (the blue dashed curve).

\begin{figure}[htbp]
\center
\includegraphics*[scale=0.25]{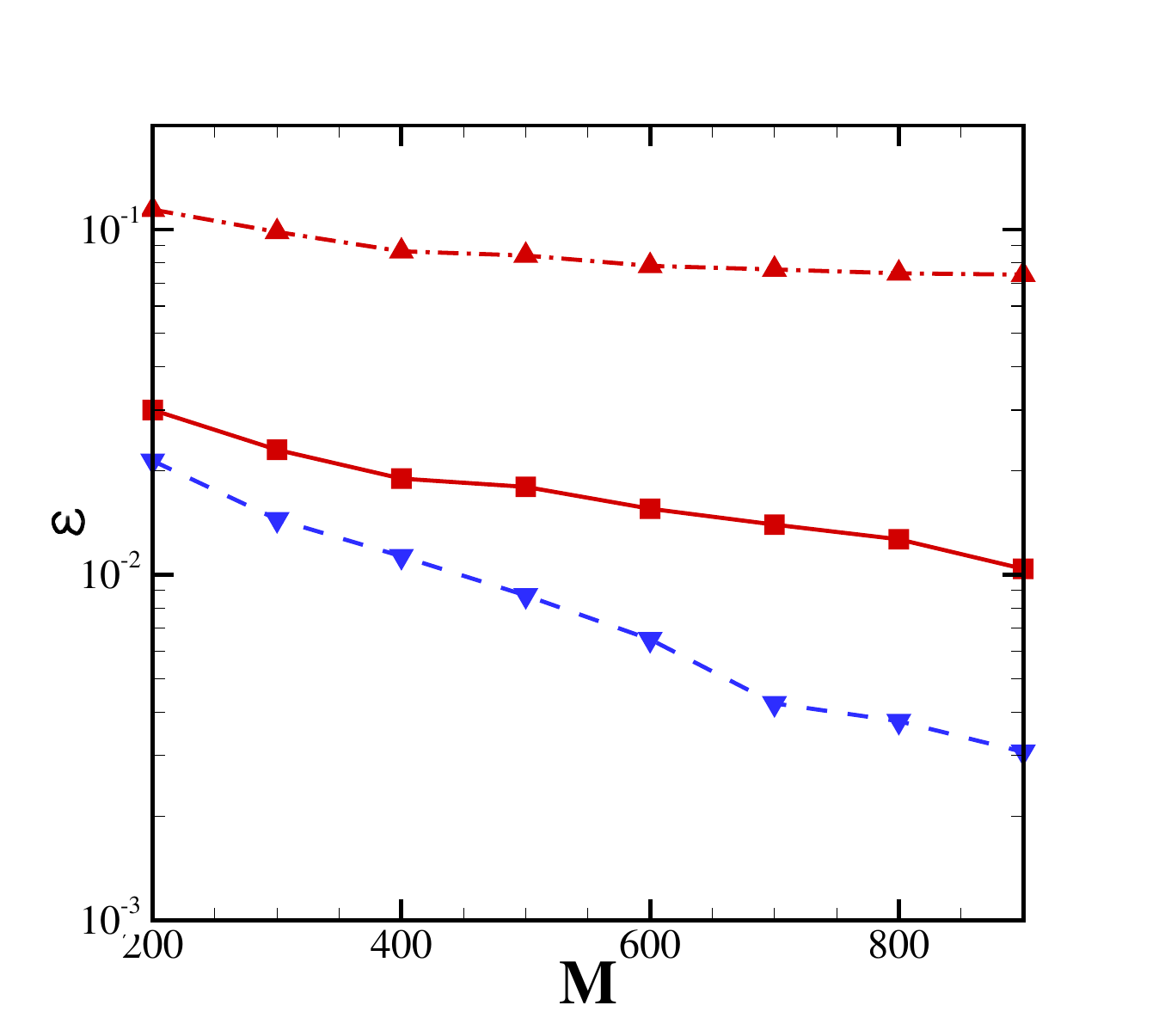}
\caption{Sparsity-enhancing rotation with the reconstructed orthonormal \black{\ac{amdP}} basis yields the most accurate 
  recovery of a high-dimensional polynomial function of random vectors
  following density function given by Equation \eqref{eq:dens_exp}. Directly applying the rotation procedure without
  reconstructing the orthonormal basis yields errorneous prediction. 
  ``\textcolor{red}{\protect\rectanglesolidline}'':  Laguerre polynomial basis with respect to $\bx$;  
  ``\textcolor{red}{\protect\triangledashdotline}'':  Laguerre polynomial basis with respect to rotated vector $\bm\chi$;  
  ``\textcolor{blue}{\protect\downtriangledashline}'':  the reconstructed  \black{\ac{amdP}}  orthonormal basis with respect to rotated vector $\bm\chi$.
}
\label{fig:l2_lauguerre}
\end{figure}

\subsubsection{One-dimensional elliptic \acp{PDE} with high-dimensional random inputs}
We applied the proposed method to model the solution to a one-dimensional (1D) elliptic \ac{PDE} with high dimensional random input
\begin{equation}
  \begin{aligned}
    -\frac{d}{dx} \left( D(x;\bx)\frac{d u(x;\bx)}{dx} \right) = 1, & \quad x \in (0,1) \\
    u(0) = u(1) = 0, &
  \end{aligned} \label{eq:ellip}
\end{equation}
where $a(x;\bx) := \log D(x;\bx)$ is the stochastic input and $a(x;\bx)$ was a stationary process with correlation function 
\begin{equation} \label{eq:exp_kernel}
  K(x,x') = \exp\left(\dfrac{|x-x'|}{l_c}\right),
\end{equation}
where $l_c$ is the correlation length.
We constructed $a(x;\bx)$ by the Karhunen-Lo\`eve (KL) expansion:
\begin{equation} \label{eq:kl}
  a(x;\bx) = a_0(x) + \sigma \sum_{i=1}^{d}\sqrt{\lambda_i}\phi_i(x)\xi_i, 
\end{equation}
where $\{\lambda_i\}_{i=1}^{d}$, and $\{\phi_i(x)\}_{i=1}^{d}$ are the $d$ largest eigenvalues and the corresponding eigenfunctions of $K(x,x')$.
The values of $\lambda_i$ and the analytical expressions for $\phi_i$ were available from the literature \cite{JardakSK02}.
The $\xi_i$ are i.i.d.~ random variables on $\left[-1, 1\right]$. The 
density function of $\xi_i$ is given by
\begin{equation}
\omega(\xi_i) = \frac{1}{\pi \sqrt{1 - \xi_i^2}},
\label{eq:dens_cheb}
\end{equation}
where the corresponding orthonormal basis consists of Chebyshev polynomials of 
the first kind. 
For this example, we set $a_0(x) \equiv 1$, $\sigma = 0.8$, $l_c = 0.14$ and $d = 16$.
We chose the quantity of interest as $u(x;\bx)$ at $x=0.45$ and constructed a \nth{3}-order polynomial expansion with $N=969$ basis functions.
Figure~\ref{fig:l2_chebyshev} shows the relative $l_2$ error of the constructed $\tilde{f}(\bx)$ and $\tilde{g}({\bm\chi})$.
\begin{figure}[tbp]
  \center
  \includegraphics*[scale=0.25]{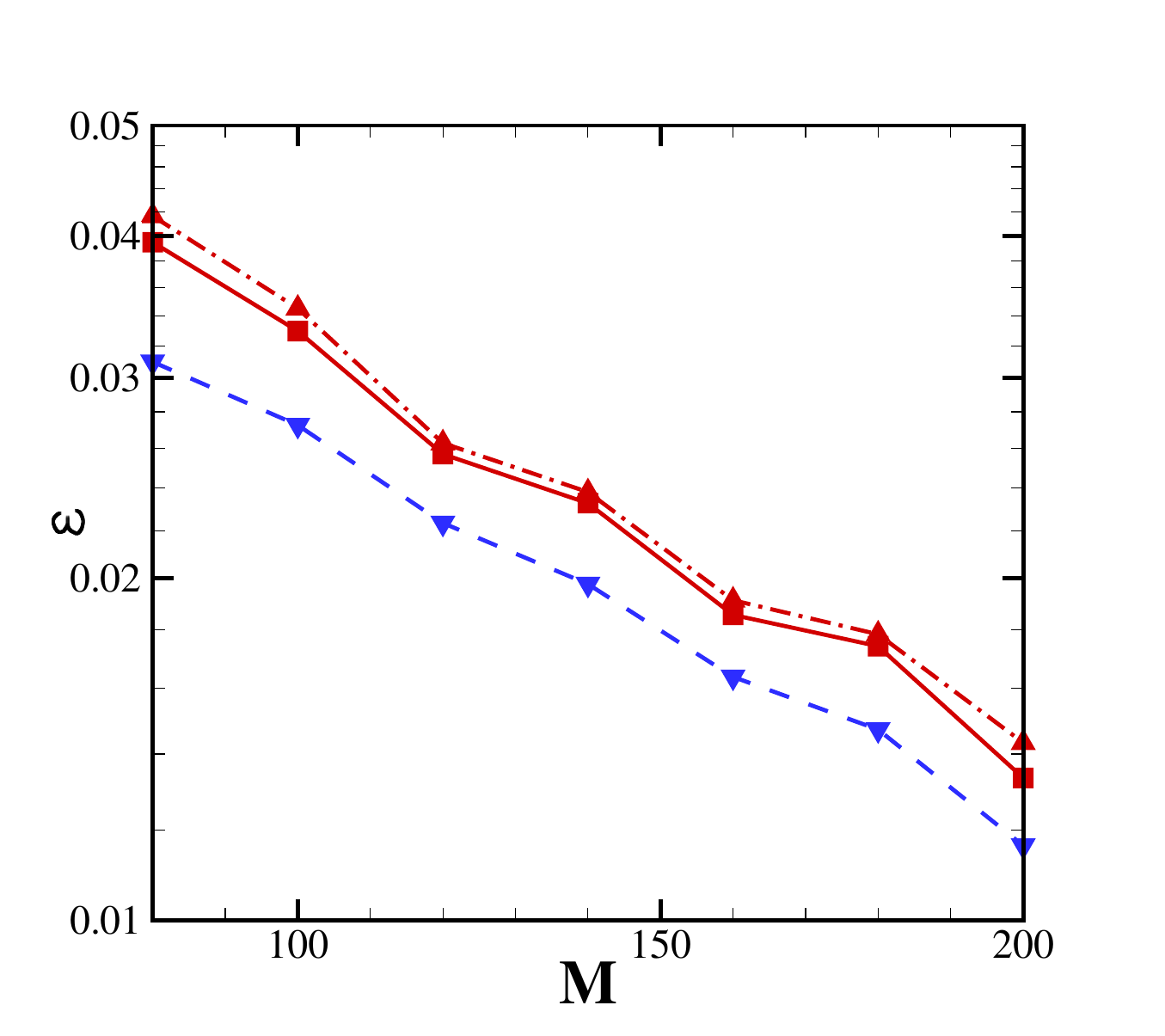}
  \caption{Sparsity-enhancing rotation with reconstructed orthonormal basis yield the most accurate surrogate 
    models for a 1D elliptical \ac{PDE} with random permeability coefficient modeled by Equations~\eqref{eq:kl} and \eqref{eq:dens_cheb}.
  Directly applying the rotation procedure without reconstructing the orthonormal basis yields increased numerical error. 
  ``\textcolor{red}{\protect\rectanglesolidline}'':  Chebyshev polynomial basis with respect to $\bx$;  
  ``\textcolor{red}{\protect\triangledashdotline}'':  Chebyshev  polynomial basis with respect to rotated vector $\bm\chi$;  
  ``\textcolor{blue}{\protect\downtriangledashline}'':  the reconstructed orthonormal \black{\ac{amdP}} basis with respect to rotated vector $\bm\chi$.
  } \label{fig:l2_chebyshev}
\end{figure}
For the density function $\omega(\bx_i)$ given by \eqref{eq:dens_cheb}, $\tilde{f}(\bx)$ can be represented by a multivariate 
basis constructed by the tensor products of univariate Chebyshev polynomials.
However, in general, the \ac{PDF} of $\bm\chi$ does not retain the form $\omega'(\bm\chi) = \prod_{i=1}^d \frac{1}{\pi \sqrt{1 - \chi_i^2}}$. 
As shown in Figure~\ref{fig:l2_chebyshev}, iteratively employing the multivariate Chebyshev polynomials to represent $\tilde{g}(\bm\chi)$ 
(the red dash-dotted curve)---as done in previous studies \cite{Yang_Wan_rotation_2017}---resulted in a larger error than $\tilde{f}(\bx).$
Representing $\tilde{g}(\bm\chi)$ by the reconstructed orthonormal \black{\ac{amdP}} basis (the blue dashed curve) further decreases 
the numerical error compared to $\tilde{f}(\bx)$ (the solid red curve).


\subsection{Systems with implicit knowledge of density function}
In this suite of benchmark examples, we investigated  the applicability and efficiency of the developed \ac{DSRAR} framework based on 
data-driven orthonormal bases construction and sparsity enhanced rotation.

\subsubsection{High-dimensional polynomials} \label{sec:high_d_poly}
We studied the ability of the data-driven method to recover a high-dimensional polynomial function 
\begin{equation}
  f(\bx) = \sum_{ \ba \in T_{\ba}} \hat{\psi}_{\ba}(\bx),
\end{equation}
where $\hat{\psi}_{\ba}$ represents the monomial basis function, $T_{\ba}$ represents a set containing $50$ indices randomly chosen from $\Lambda_p^d$ with $d = 25$ and $p = 3$.
The sample set $S$ of random vector $\bx$ for basis construction was generated from the Gaussian mixture model specified in \eqref{eq:GM_test_set} with $|S| = 2\times 10^5$. 

We approximated $f(\bx)$ by a \nth{3}-order polynomial expansion $\tilde{f}(\bx) = \sum_{i=1}^N \tilde{c}_i\psi_i(\bx)$ with $N = 3276$.
Figure~\ref{fig:err_rand_poly_dim_25}(a) shows the relative $l_2$ error of the constructed surrogate model $\tilde{f}$ defined by
\begin{equation}
  \epsilon = \left(\int(f(\bx) - \tilde{f}(\bx))^2 \dif \nu_{S_2}(\bx) \big/ \int f(\bx)^2 \dif \nu_{S_2}(\bx) \right)^{\frac{1}{2}},
\end{equation}
where $20$ implementations were utilized for each training sample size number $M$.
\begin{figure}[htbp]
  \center
  \subfigure[]{\includegraphics*[scale=0.25]{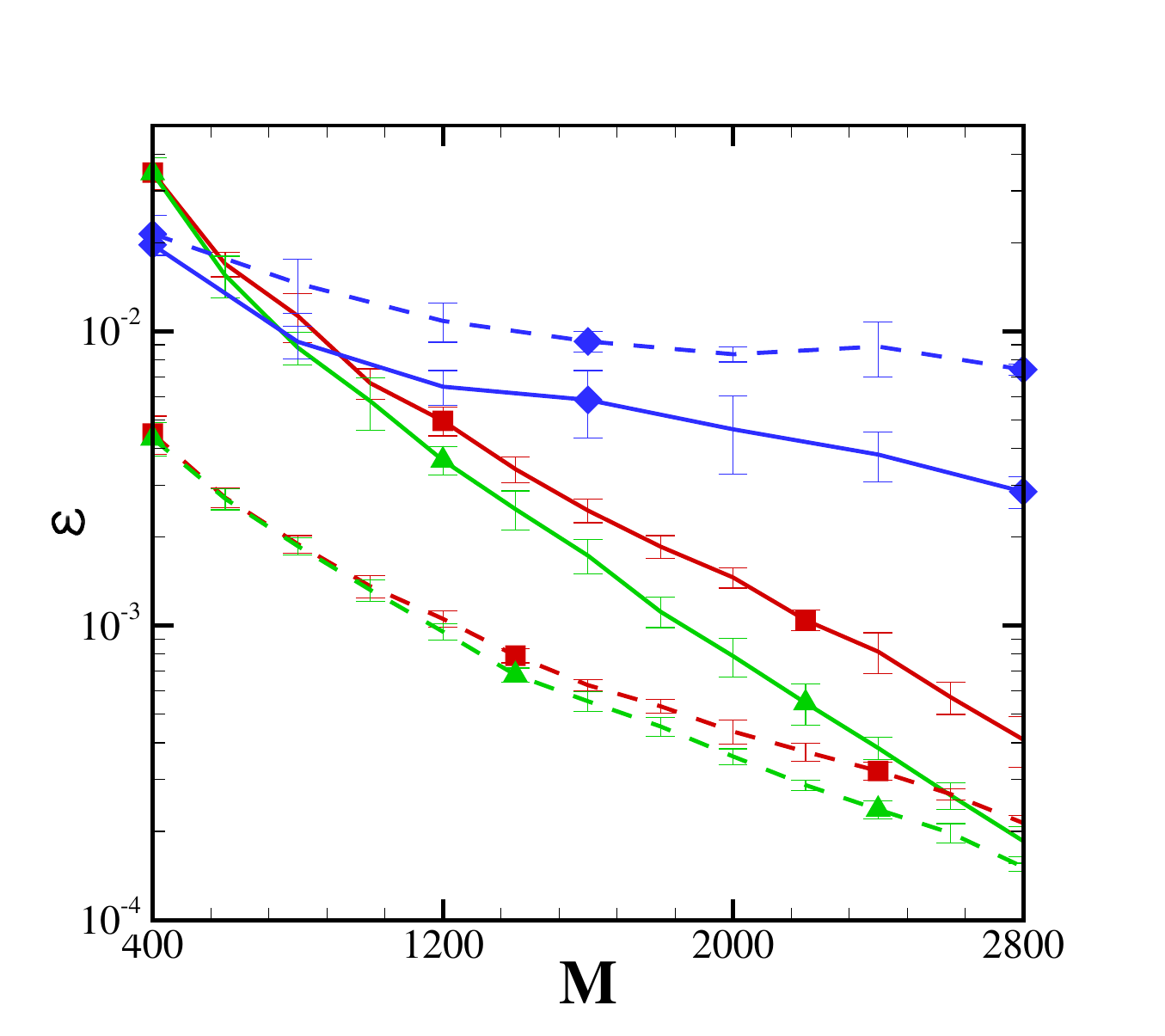}}
  \subfigure[]{\includegraphics*[scale=0.25]{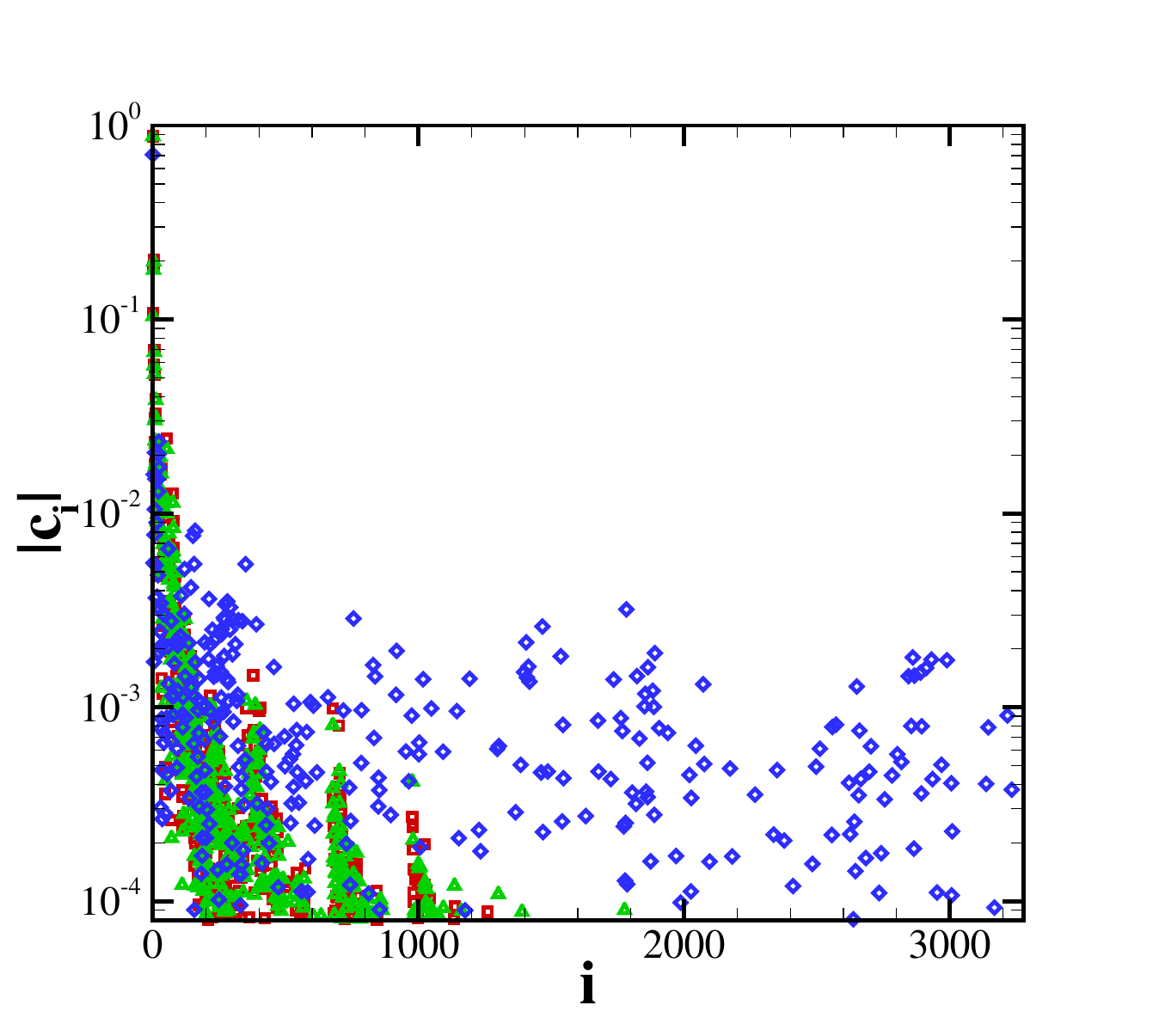}} \\
  \subfigure[]{\includegraphics*[scale=0.25]{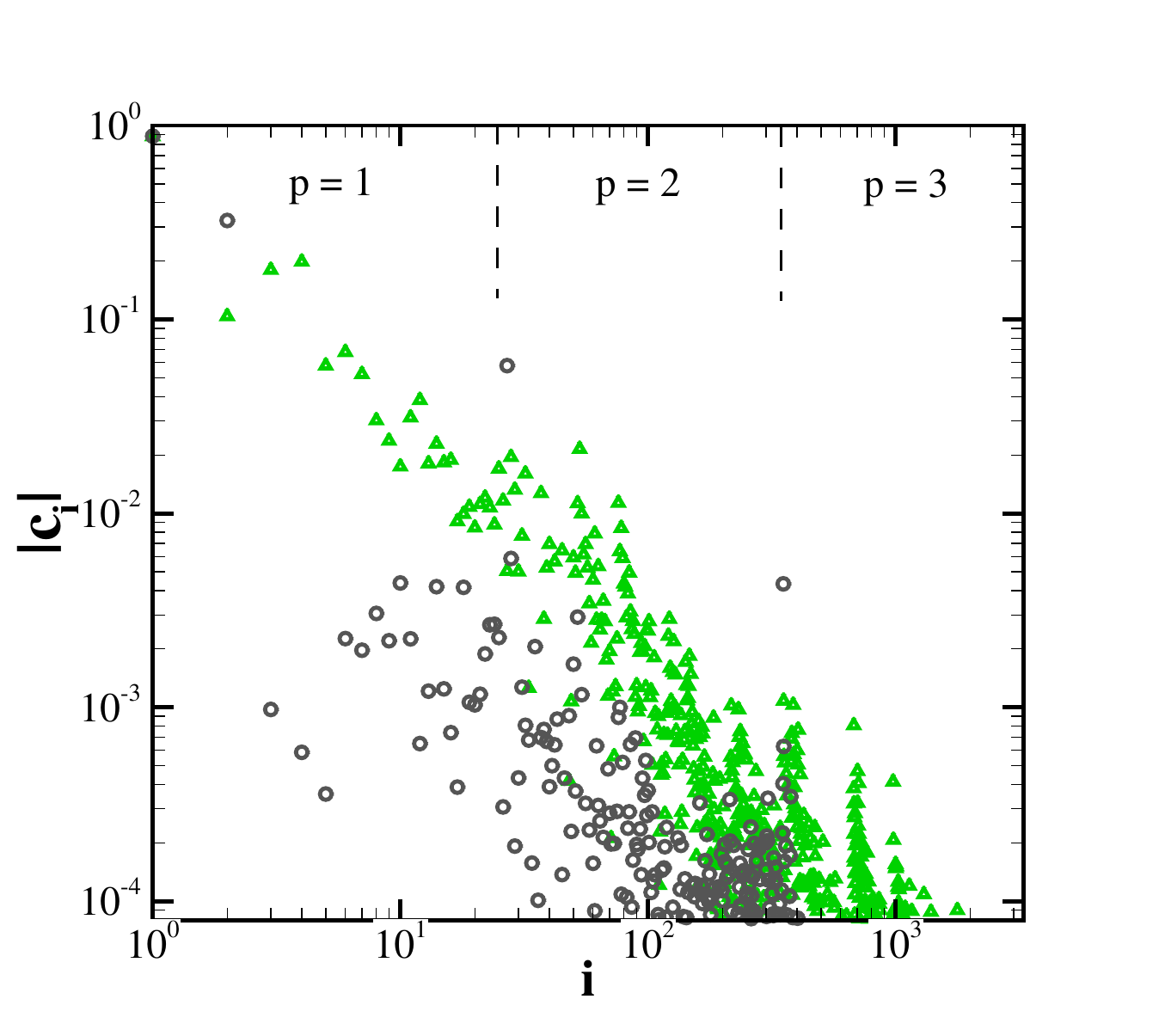}}
  \subfigure[]{\includegraphics*[scale=0.25]{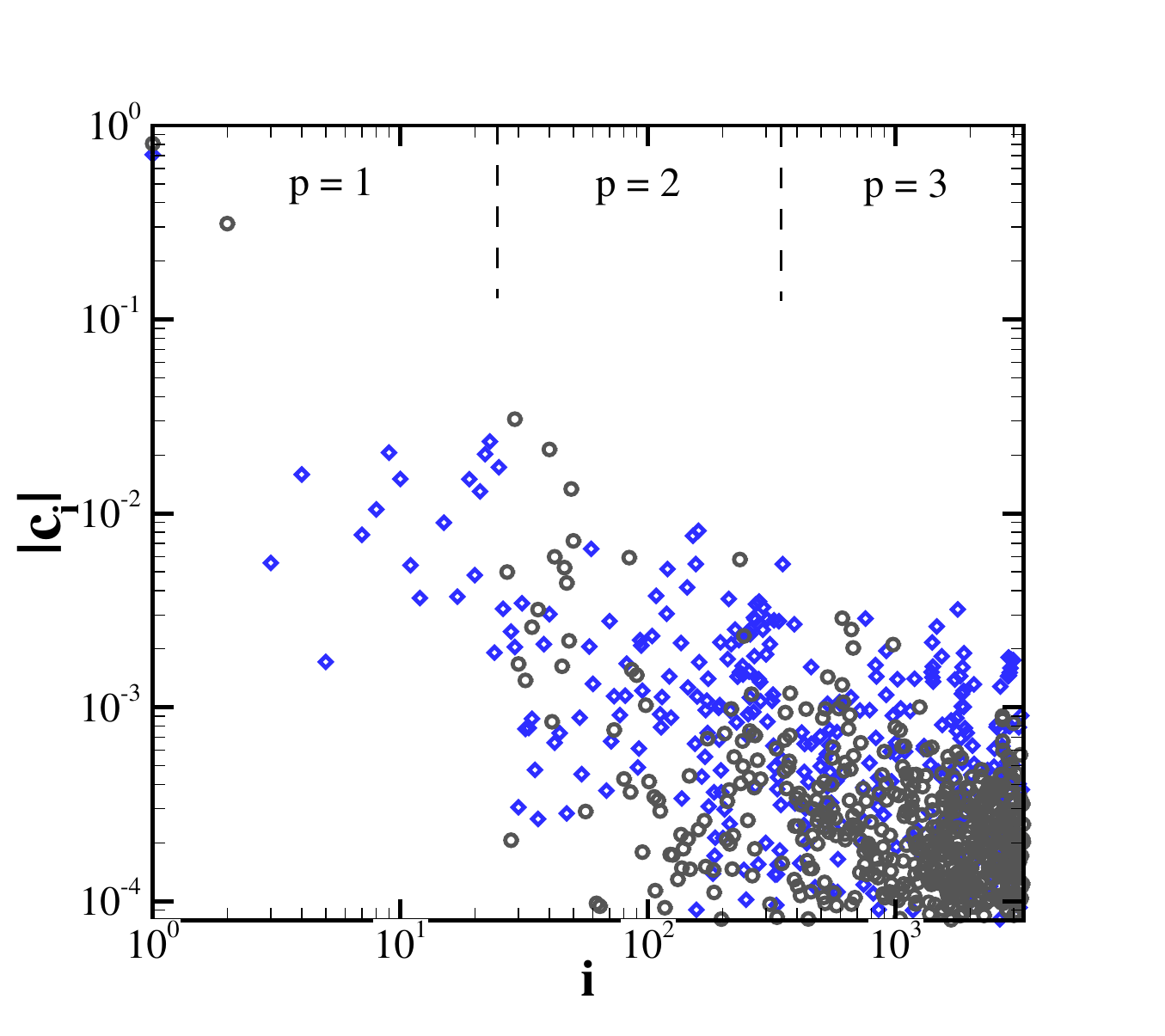}}
  \caption{Numerical results for recovery of a high-dimensional polynomial function. The combination of
  near-orthonormal basis construction with the sparsity enhancement rotation procedure yields 
  the most accurate results. Directly applying the rotation procedure to the Legendre basis 
  may lead to increased error despite increased sparsity in $\bm c$.
  (a) Relative $l_2$ error of the recovered polynomial function with different bases:
  the exact orthonormal \black{\ac{amdP}} basis with respect to $\bx$ (``\textcolor{red}{\protect\rectanglesolidline}'') and 
  $\bm\chi$ (``\textcolor{red}{\protect\rectangledashline}''); 
  the near-orthonormal \black{\ac{amdP}} basis with respect to $\bx$ (``\textcolor{green}{\protect\trianglesolidline}'') and
  $\bm\chi$ (``\textcolor{green}{\protect\triangledashline}'');
  Legendre basis with respect to $\bx$ (``\textcolor{blue}{\protect\diamondsolidline}'') and 
  $\bm\chi$( ``\textcolor{blue}{\protect\diamonddashline}'').
  (b) Coefficients magnitude $\vert c_{i}\vert$ recovered using different bases. 
  ``\textcolor{red}{\protect\rectangleopen}'': the exact orthonormal \black{\ac{amdP}} basis with respect to $\bx$;  
  ``\textcolor{green}{\protect\triangleopen}'': the near-orthonormal \black{\ac{amdP}} basis with respect to $\bx$;  
  ``\textcolor{blue}{\protect\diamondopen}'': Legendre basis with respect to $\bx$.
  (c) Recovered coefficient magnitude $\vert \mathbf{c}_{i}\vert $ using the near orthogonal basis with respect to $\bx$ 
    (``\textcolor{green}{\protect\triangleopen}'') and $\bm\chi$ (``\textcolor{gray}{\protect\circleopen}'').
  The dashed vertical lines indicate the separation between different polynomial orders $p$.
  (d) Recovered coefficient magnitude $\vert \mathbf{c}_{i}\vert $ using the Legendre basis with respect to 
  $\bx$ (``\textcolor{blue}{\protect\diamondopen}'') and $\bm\chi$ (``\textcolor{gray}{\protect\circleopen}'').} 
  \label{fig:err_rand_poly_dim_25}
\end{figure}
As shown in Figure~\ref{fig:err_rand_poly_dim_25}(a), $\tilde{f}(\bx)$ constructed by the near-orthonormal 
\black{\ac{amdP}} 
basis yielded the smallest error while the tensor product of Legendre basis functions yielded the largest error.
Accordingly, the magnitudes of the recovered coefficients $\vert \tilde{c}_i \vert$ by the exact and near-orthonormal bases decayed more quickly than those recovered using the Legendre basis functions, as shown in Figure~\ref{fig:err_rand_poly_dim_25}(b).
Furthermore, $\tilde{f}(\bx)$ allowed us to define a new random vector $\bm\chi$, which further enhanced the sparsity of $\bm c$, as shown in Figures~\ref{fig:err_rand_poly_dim_25}(c) and (d).
Following Step 5 in Algorithm 4, we defined a new random $\bm\chi$ through rotation. The associated representation
coefficient vector $\bm c$ has enhanced sparsity.

However, for the exact and near-orthonormal basis, the $\tilde{g}(\bm\chi)$ gave smaller errors (the dashed curve) than $\tilde{f}(\bx)$ (the solid curve), as shown in Figure~\ref{fig:err_rand_poly_dim_25}(a).
Thus, enhancing the sparsity of $\bm c$ alone does not guarantee enhanced accuracy of $\tilde{f}$.
In particular, $\tilde{g}(\bm\chi)$ constructed by the Legendre basis yielded larger error than $\tilde{f}(\bx)$ as demonstrated in Figure~\ref{fig:err_rand_poly_dim_25}(a); although, the sparsity of $\bm c$ was greater, as seen in Figure~\ref{fig:err_rand_poly_dim_25}(d).
This behavior indicates that retaining the orthonormal condition can be crucial for the accurate construction of $\tilde{f}$. {The basis bound (see Table \ref{tab:GM_d_25_p_3} in \ref{app:basis_bound}) provides a metric to understand why the near-orthonormal basis performs better than the exact orthonormal basis.}


\subsubsection{1D elliptic \acp{PDE} with high-dimensional random inputs}
In this example, we revisited the 1D elliptic \ac{PDE} (\ref{eq:ellip})
with random coefficient given by Equation \eqref{eq:kl}. 
Here we set $a_0(x) \equiv 1$, $\sigma = 1$, $l_c=0.12$ and $d=20$ such that $\sum_{i=1}^d\lambda_i > 0.91\sum_{i=1}^{\infty}\lambda_i$. 

Similar to the work by Zabaras et al.\ \cite{Zabaras_2014}, a non-Gaussian multivariate distribution was used for $\bx = \left(\xi_1, \xi_2, \cdots, \xi_d\right)$.
We generated a sample set $\left\{\tilde{\bx}^{(k)}\right\}_{k=1}^{Ns}$, where $N_s = 2\times 10^5$ and $\tilde{\bx}$ came from the Gaussian mixture distribution specified in \eqref{eq:GM_test_set}.
We used \ac{PCA} to transform $\tilde{\bx}$ to $\bx$ such that $\mathbb{E}\left[\bx_i\right] = 0$ and $\mathbb{E}\left[\bx_i \bx_j\right] = \delta_{ij}$.
For each input sample $\bx^{(k)}$, $a$ and $u$ only depended on $x$ and the solution of the deterministic elliptic equation is given by \cite{Yang_2013reweightedL1}
\begin{equation}\label{eq:ellip_sol}
  \begin{split}
    & u(x) = u(0) + \int_0^x \dfrac{a(0)u(0)'-y}{a(y)}\dif y \\
    & a(0)u(0)' = \left(\int_0^1 \dfrac{y}{a(y)}\dif y\right) \Big / \left(\int_0^1 \dfrac{1}{a(y)}\dif y\right).
  \end{split}
\end{equation}

We chose the \ac{QoI} to be $u(x;\bx)$ at $x=0.35$ and constructed a \nth{3}-order polynomial expansion with $N=1771$ basis functions.
Figure~\ref{fig:err_elliptical_dim_20_p_3} shows the relative $l_2$ error of $\tilde{f}(\bx)$ (solid curve) and $\tilde{g}(\bm\chi)$ (dashed curve) constructed by different bases.
\begin{figure}[tbp]
  \center
  \includegraphics*[scale=0.25]{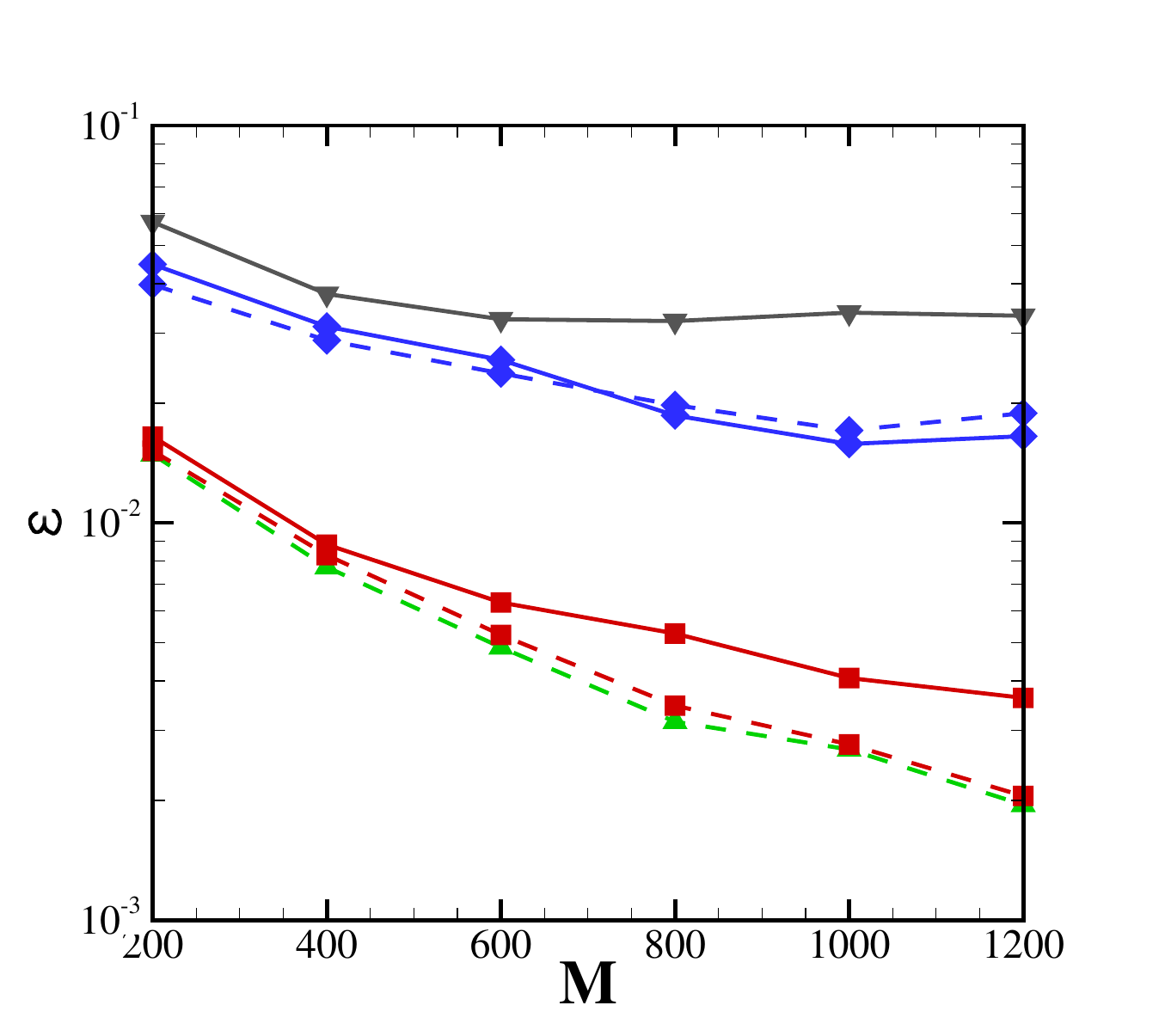}
  \caption{The combination of near-orthonormal basis construction and sparsity enhancement rotation
  yields the most accurate results, as shown through the relative $l_2$ error of the constructed surrogate model for the 1D elliptic \ac{PDE} 
  with random permeability coefficient: the exact orthonormal \black{\ac{amdP}} basis with respect to $\bx$ 
    (``\textcolor{red}{\protect\rectanglesolidline}'') and 
  $\bm\chi$ (``\textcolor{red}{\protect\rectangledashline}''); 
  Legendre basis with respect to $\bx$ (``\textcolor{blue}{\protect\diamondsolidline}'') and 
  $\bm\chi$( ``\textcolor{blue}{\protect\diamonddashline}'');
  Hermite basis with respect to $\bx$ (``\textcolor{gray}{\protect\downtrianglesolidline}'');
  the near-orthonormal \black{\ac{amdP}} basis with respect to $\bm\chi$ (``\textcolor{green}{\protect\triangledashline}'').
  } \label{fig:err_elliptical_dim_20_p_3}
\end{figure}
The data-driven bases (both exact orthonormal basis and near-orthonormal basis) showed more accurate results than the Legendre basis and the Hermite basis.
In particular, the near-orthonormal basis with respect to the rotated variable $\bm\chi$ yielded the most accurate result (the green dashed curve).
In contrast, directly employing the Legendre basis to the rotated  variable $\bm\chi$ without reconstructing the basis function led to increased $l_2$ error, although $\bm c$ shows more sparsity in terms of $\bm\chi$ (the gray dashed curve) than $\bx$ (the gray solid curve). 

\subsection{\ac{UQ} study of a molecule system under Non-Gaussian conformational distributions}\label{sec:mole_example}

We demonstrated the proposed method on a physical system exploring conformational uncertainty in a small molecule system.
Molecular properties, such as solvation energies or \acp{SASA}, are often calculated using single molecular conformations.
However, due to thermal energy, a molecule undergoes conformational fluctuations which can induce significant uncertainty in properties calculated from single structures.
Our previous work \cite{Lei_Yang_MMS_2015} was focused on quantifying this uncertainty using a simple multivariate Gaussian model for conformational fluctuations: the elastic network model \cite{Ati_Bahar_BJ_2001}.
However, it is well known that the conformational fluctuations are often non-Gaussian due to the complicated structure of the underlying energy landscape.
Therefore, in the current study, we construct the data-driven basis \emph{directly} from the samples of molecular trajectories collected from \ac{MD} simulations, thus eliminating the \emph{over-simplified} Gaussian assumption.

We simulated the dynamics of the small molecule benzyl bromide under equilibrium (see \ref{app:sim} for details) and collected a sample set of the instantaneous molecular structure  $\left\{\mb r^{(k)}\right\}_{k=1}^{N_s}$ from \ac{MD} simulation trajectories over $20 \mu$s.
In what follows, $N_s = 2\times 10^5$ and $\mb r$ represent the positions of individual atoms.
As a pre-processing step, we transformed $\left\{\mb r^{(k)}\right\}_{k=1}^{N_s}$ into a set of uncorrelated random vectors $S = \left\{\mb \bx^{(k)}\right\}_{k=1}^{N_s}$ via \ac{PCA}: 
\begin{equation}
  \begin{split}
    &\bm\Sigma = \mathbb{E}\left[\left(\mb r - \bar {\mb r}\right) \left(\mb r - \bar {\mb r}\right)^T\right] \\
    &\bm\Sigma = \mb Q \bm \Gamma \mb Q^T \quad \bx = \bm \Gamma^{-1/2}\mb Q^T\mb r,
  \end{split}
\end{equation}
where the average $\mathbb{E}[\cdot]$ is taken over the entire sample set and $\bx \in \mathbb{R}^{12}$ is the normalized random vector that represents $99.99\%$
of the {observed variance}. 
Figures~\ref{fig:err_mol_dim_12_p_4}(a) and (b) show the joint distributions of $\left(\xi_1,\xi_2\right)$ and $\left(\xi_1,\xi_3\right)$.
\begin{figure}[tbp]
  \center
  \subfigure[]{\includegraphics*[scale=0.25]{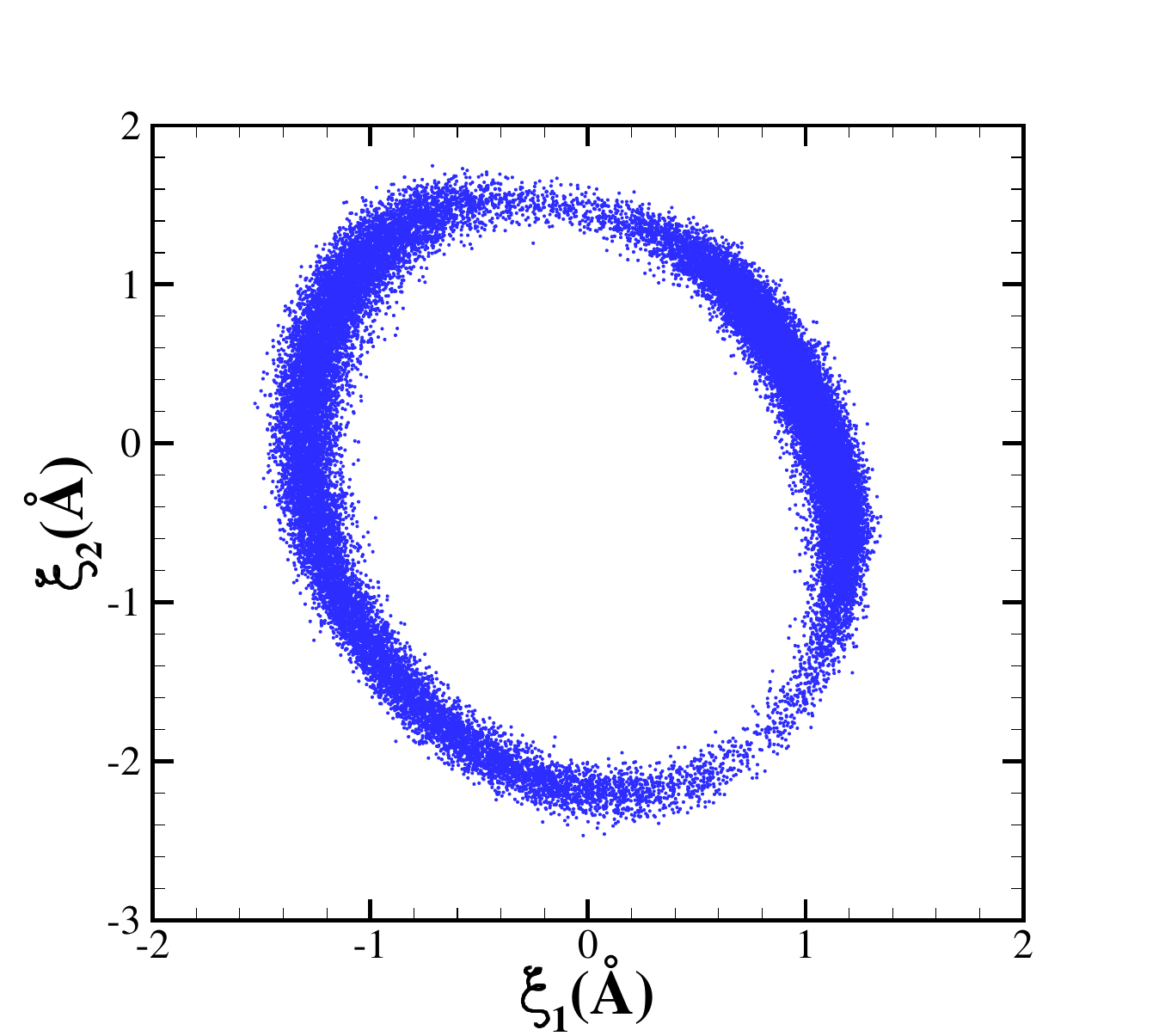}}
  \subfigure[]{\includegraphics*[scale=0.25]{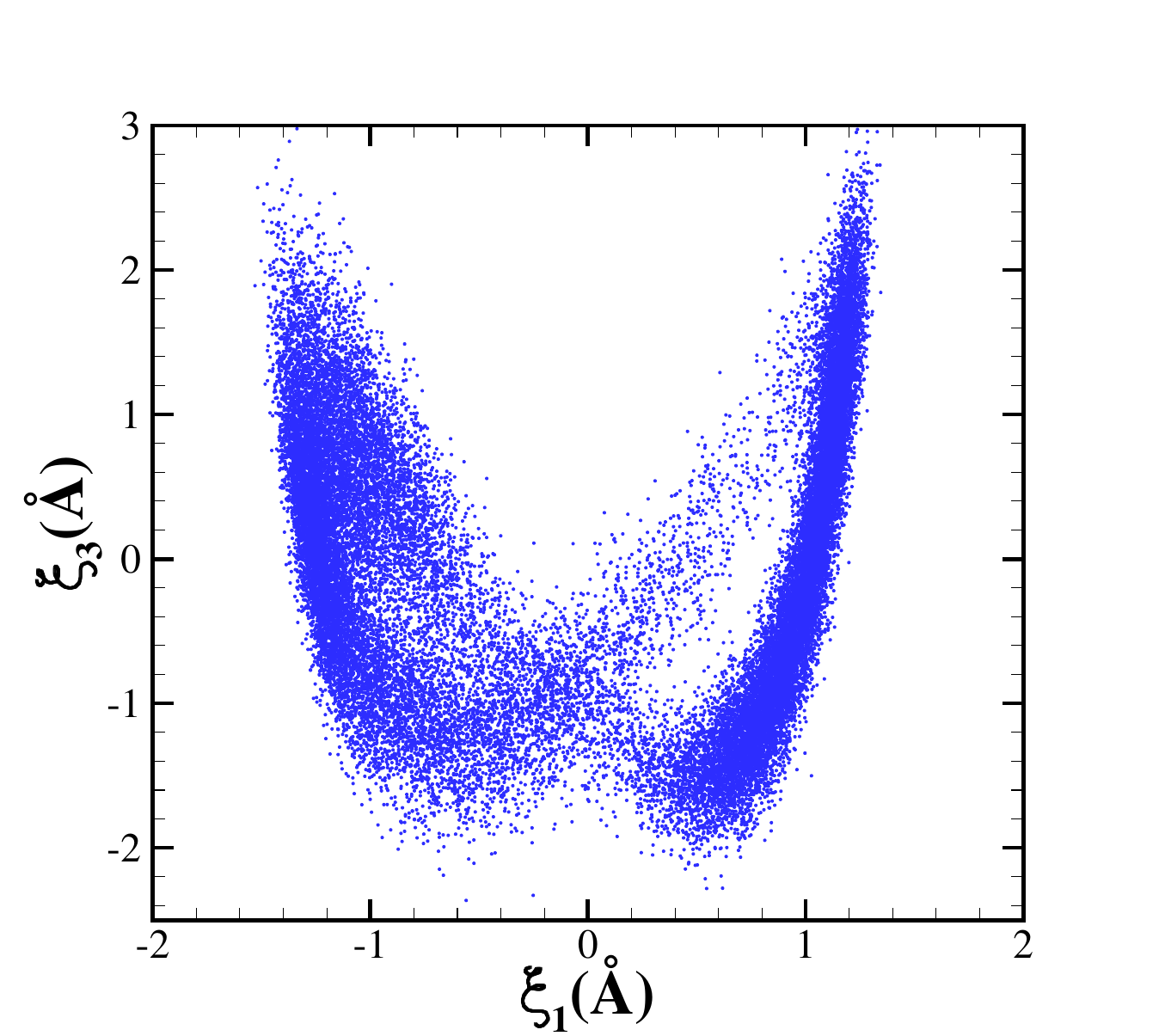}} \\
  \subfigure[]{\includegraphics*[scale=0.25]{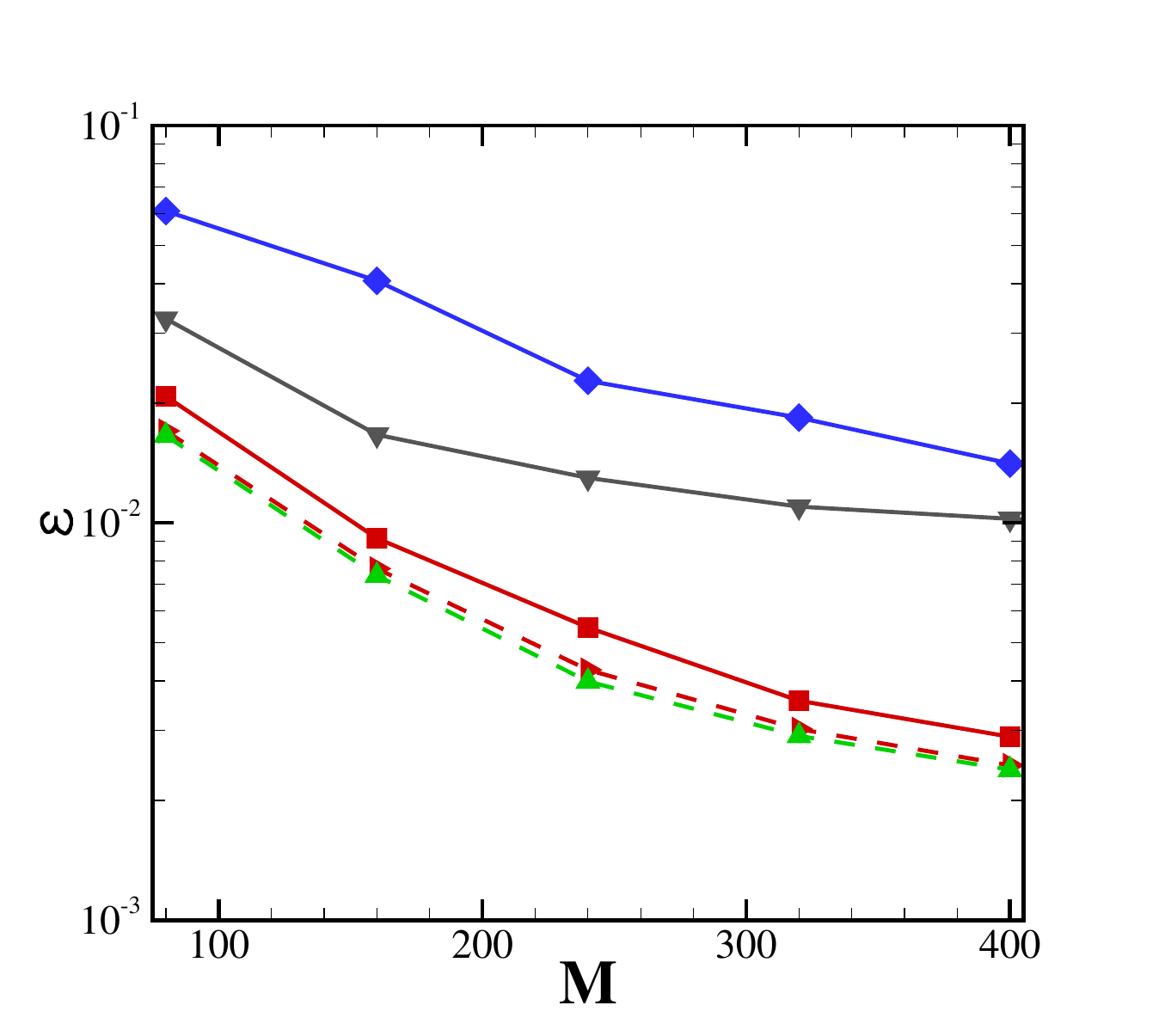}}
  \subfigure[]{\includegraphics*[scale=0.25]{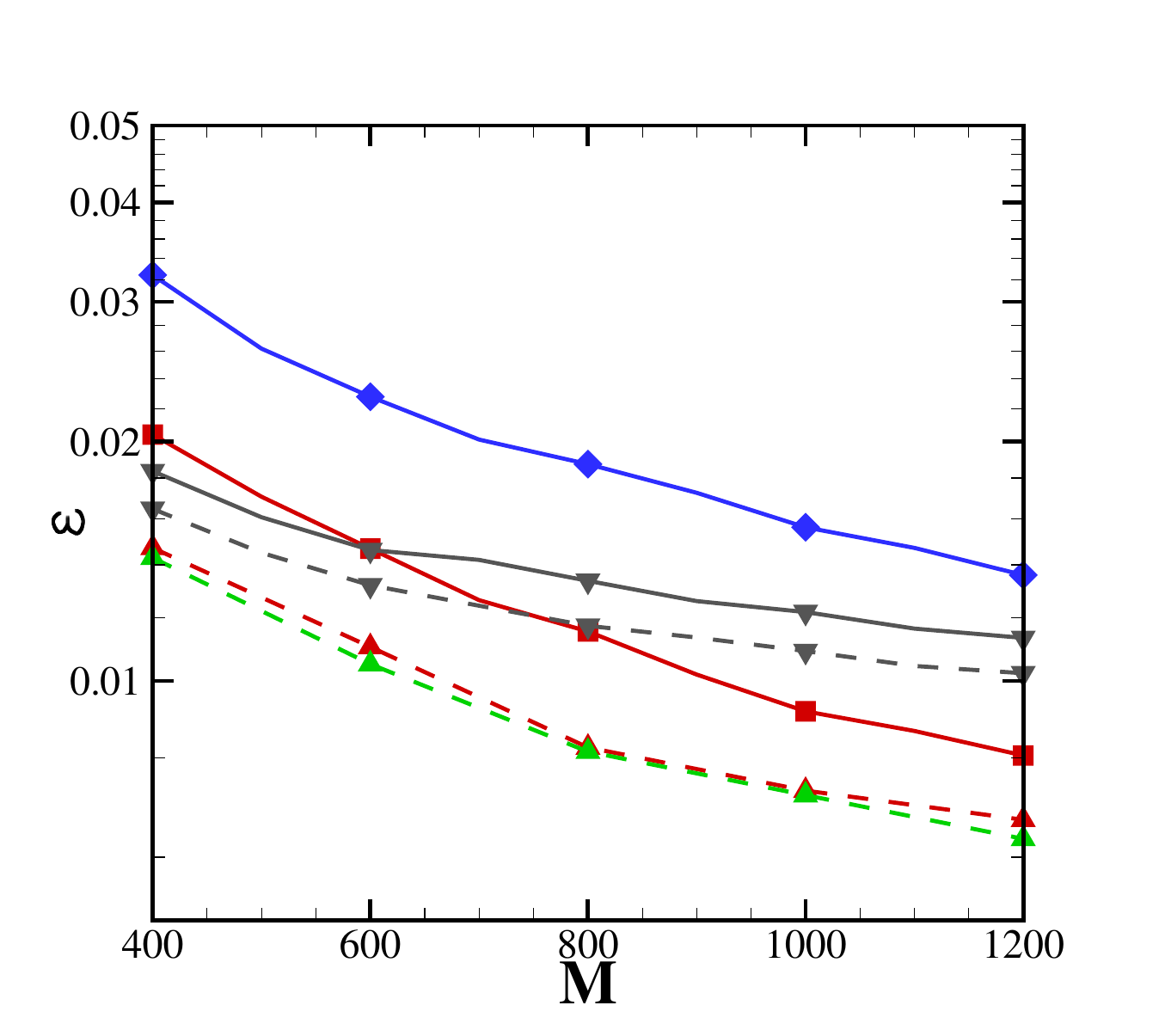}}
  \caption{{The present method based on data-driven basis construction and sparsity 
    enhancement rotation yields the most accurate surrogate
  model for molecular systems with mutually dependent non-Gaussian density distributions.} 
  (a-b) Sampling points representing the joint distributions $\left(\xi_1, \xi_2\right)$ (left) and $\left(\xi_1, \xi_3\right)$ (right).
  (c-d) Relative $l_2$ error of the polar solvation energy (left) and the local \ac{SASA} (right) of an individual atom
  (the H9 atom attached to the ortho-carbon atom) obtained with different numbers of training data $M$:
  the exact \black{\ac{amdP}} orthonormal basis with respect to $\bx$ 
    (``\textcolor{red}{\protect\rectanglesolidline}'') and 
  $\bm\chi$ (``\textcolor{red}{\protect\rectangledashline}''); 
  Hermite basis with respect to $\bx$ (``\textcolor{gray}{\protect\downtrianglesolidline}'')
  and $\bm\chi$ (``\textcolor{gray}{\protect\downtriangledashline}'');
  Legendre basis with respect to $\bx$ (``\textcolor{blue}{\protect\diamondsolidline}'');
  the near-orthonormal \black{\ac{amdP}} basis with respect to $\bm\chi$ (``\textcolor{green}{\protect\triangledashline}'').
  } \label{fig:err_mol_dim_12_p_4}
\end{figure}
Although the individual components of $\bx$ are uncorrelated, the joint density distributions are mutually dependent and deviate from the standard Gaussian distributions. 

We chose the polar solvation energy and \ac{SASA} as the target \acp{QoI} for this system.
The polar solvation energy was modeled by the Poisson-Boltzmann equation \cite{ren_biomolecular_2012,baker_biomolecular_2005}
\begin{equation}
  -\nabla \cdot (\epsilon_f(\bm x; \bx) \nabla \varphi(\bm x; \bx)) = \rho_f(\bm x; \bx)
  \label{eq:PB}
\end{equation}
which relates the electrostatic potential $\varphi$ to a dielectric coefficient $\epsilon_f$ and a fixed charge distribution $\rho_f$. 
Equation~\eqref{eq:PB} is typically solved with Dirichlet boundary conditions set to an analytical asymptotic solution of the equation for an infinite domain. 
The dielectric coefficient $\epsilon_f$ implicitly represents the boundary between the atoms of the molecule and the surrounding solvent:  the coefficient changes rapidly across this boundary from a low dielectric value in the molecular interior to a high dielectric value in the solvent.
The charge distribution $\rho_f$ is generally modeled as a collection of $\delta$-like functions centered on the atoms of the molecule with magnitudes proportional to the atomic partial charges.
Both $\epsilon_f$ and $\rho_f$ are dependent on the instantaneous molecular structure (i.e., $\bx$).
The polar solvation energy was calculated from 
\begin{equation}
  G_p(\bx) = \int \rho_f(\bm x; \bx) \left( \varphi(\bm x; \bx) - \varphi_h(\bm x; \bx) \right) d \bm x
\end{equation}
where $\varphi_h$ is a reference potential obtained from solution of
\begin{equation}
  -\epsilon_h \nabla^2 \varphi_h(\bm x; \bx) = \rho_f(\bm x; \bx)
\end{equation}
where $\epsilon_h$ is a constant reference dielectric value.
We used the \ac{APBS} software to solve the equations above \cite{APBS_2018}.
Besides the solvation energy of the whole molecule, we also studied a local property like the \ac{SASA} of an individual atom 
(the H9 atom attached to the ortho-carbon atom of the benzyl bromide molecule, see Figure \ref{fig:mol_blb}) 
by the Shrake-Rupley algorithm \cite{Shrake_JMB_1973} using \ac{APBS}.
Details of the \ac{APBS} calculations are presented in \ref{app:sim}.

Figures~\ref{fig:err_mol_dim_12_p_4}(c) and (d) show the relative $l_2$ error of the constructed surrogate model $\tilde{f}(\bx)$ for the solvation energy and \ac{SASA} using a \nth{4}-order \ac{gPC} expansion with $N = 1820$ basis functions.
{For both \acp{QoI}, the near-orthonormal and orthonormal
 bases with respect to the rotated variable $\bm\chi$ (dashed curves) 
 yield similar error which is much smaller than the error of 
 Legendre and Hermite bases. A possible explanation for the 
 similar performance of the near-orthonormal and orthonormal bases is the
 closeness of the basis bound estimates for these two bases (see 
 Table \ref{tab:BioMol_d_12_p_4} in \ref{app:basis_bound}).

\begin{figure}[tbp]
  \center
  \subfigure[]{\includegraphics*[scale=0.25]{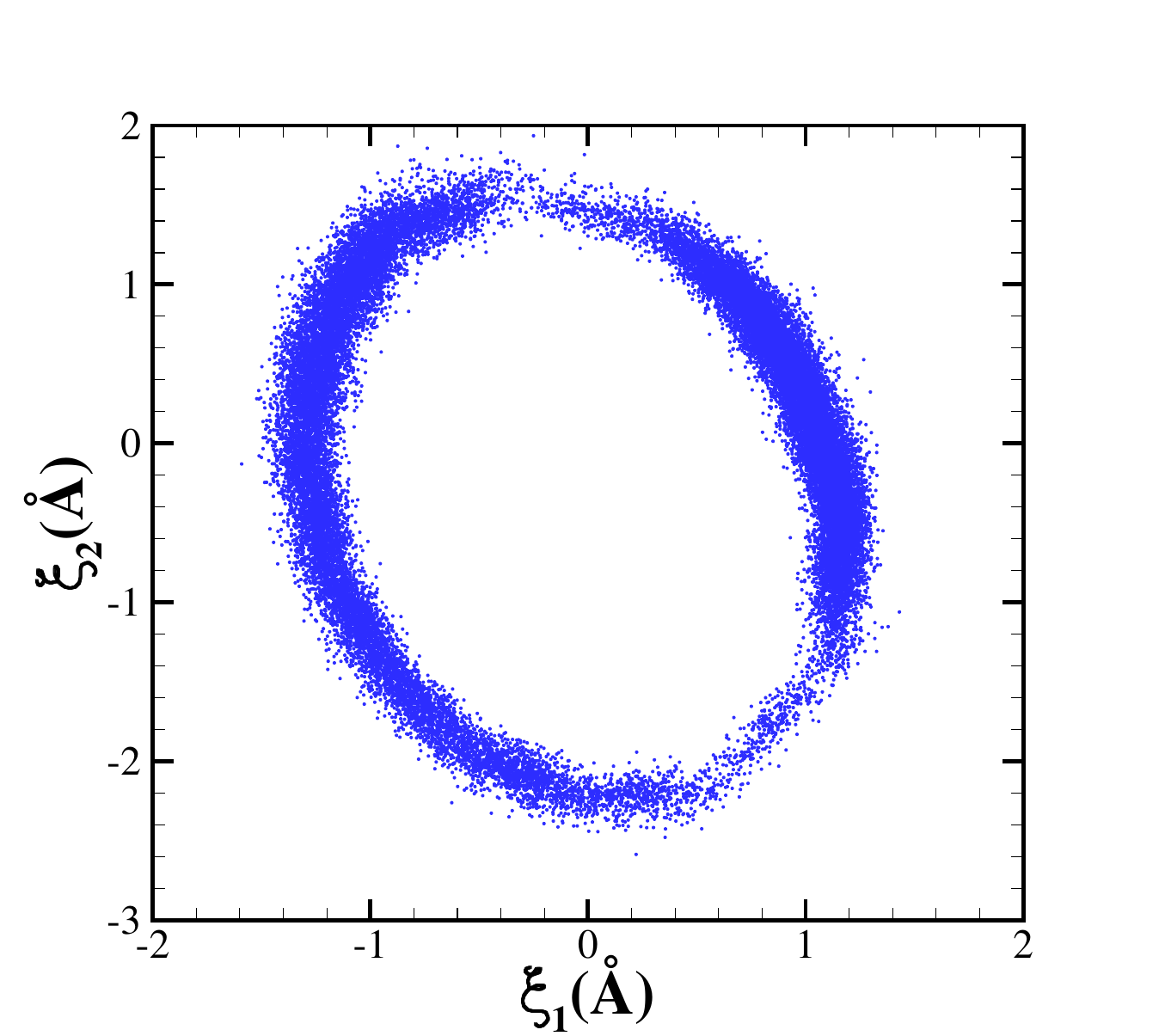}}
  \subfigure[]{\includegraphics*[scale=0.25]{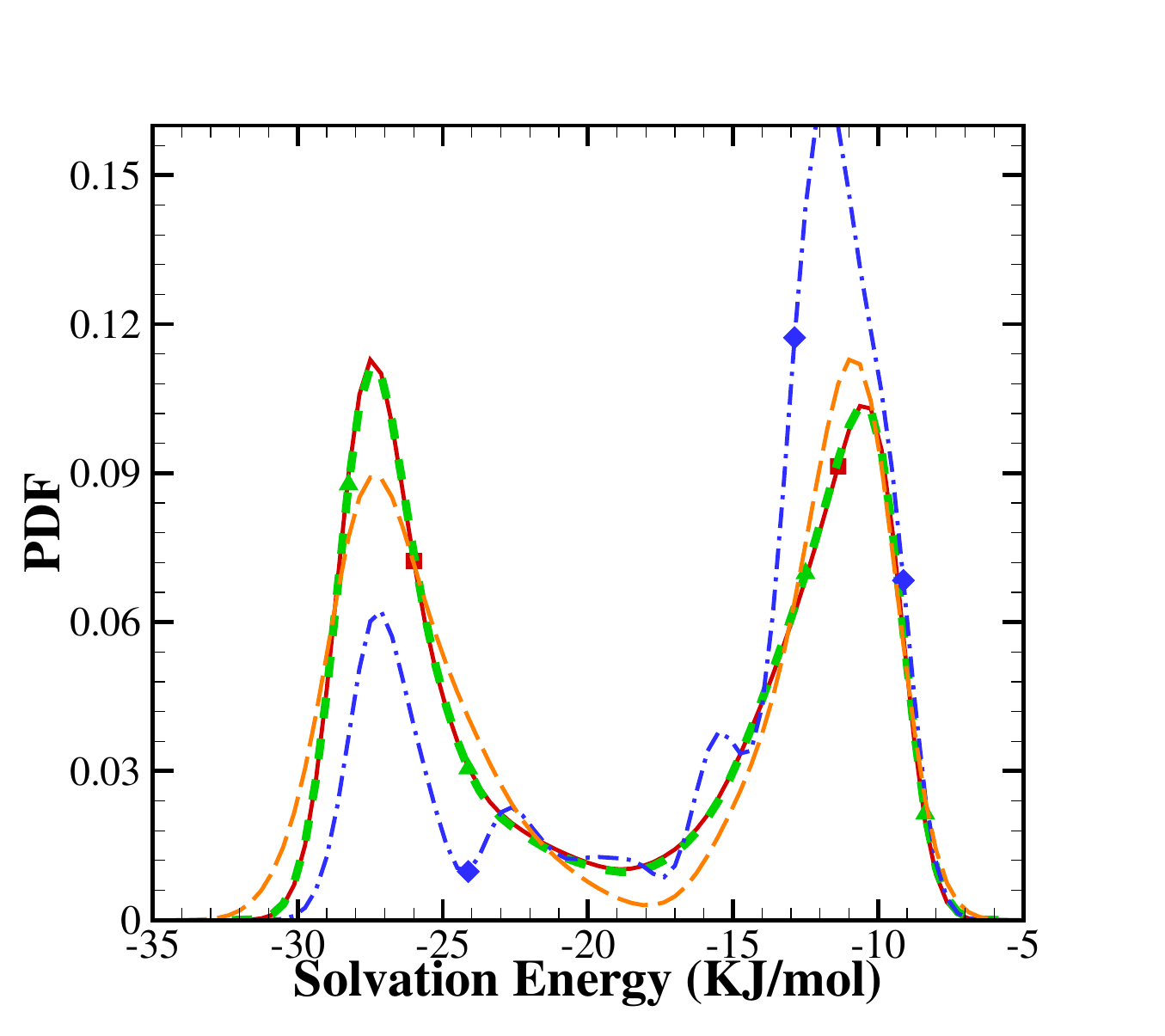}}
  \caption{{The present method yields the most accurate prediction on the \ac{PDF} of the QoI for the molecular systems.
  Direct fitting of the underlying density $\omega(\bx)$ using Gaussian Mixture model may induce biased error to
    the \ac{PDF} prediction.}
  (a) Fitted random variables $(\xi_1,\xi_2)$ with Gaussian mixture models.
  (b) \ac{PDF} of the solvation energy obtained with the Gaussian Mixture model and the present data-driven approach.
    ``\textcolor{red}{\protect\rectanglesolidline}'': reference solution obtained from $2\times10^5$ \ac{MC} samples;
    ``\textcolor{blue}{\protect\diamonddashdotline}'': direct \ac{MC} sampling using the same set of $200$ samples; 
    ``\textcolor{green}{\protect\triangledashline}'': present method using the same set of $200$ samples; 
    ``\textcolor{orange}{\protect\dashline}'': fitting Gaussian Mixture model using $800$ samples.
   }
   \label{fig:pdf_mol_dim_12_p_4}
\end{figure}

Instead of the direct construction of $\tilde{f}(\bx)$ using data-driven basis functions, another possible 
approach to characterize the uncertainty of the molecular system is to fit the distribution density $\omega(\bx)$ with 
a distribution model such as a Gaussian Mixture model. Figure~\ref{fig:pdf_mol_dim_12_p_4} (a) shows a scatter plot of the joint distribution $\left(\xi_1,\xi_2\right)$ extracted from the fitted Gaussian mixture distribution $\tilde{\omega}(\bx)$ using $7$ Gaussian modes.
Accordingly, we can construct the surrogate model for each Gaussian mode using standard Hermite basis function.
However, it is well-known that accurate construction of $\omega(\bx)$ is a numerically challenging problem for $d > 4$.
As shown in Figure~\ref{fig:pdf_mol_dim_12_p_4}(b), direct fitting $\omega(\bx)$ by $\tilde{\omega}(\bx)$ 
induces non-negligible error and leads to biased prediction of the \ac{PDF} of the solvation energy. 
\black{
Furthermore, we lose the one-to-one mapping between the individual conformation state $\bx$ and the \acp{QoI} through the 
constructed surrogate model $\tilde{f}(\bx)$.}


\section{Summary}
\label{sec:summary}

In this study, we have developed a \ac{DSRAR} framework for constructing surrogate models \black{irrespective of the mutual dependence between the  components 
of random inputs} using limited training points. 
\black{
To the best of our knowledge, this problem has not been
addressed by  previous \ac{UQ} studies based on polynomial 
chaos expansions. The \ac{DSRAR} framework does not assume  mutual 
independence between the components of random inputs and therefore can be
applied to \ac{UQ} in complex systems
where information about the underlying random distribution can be implicit.
}
%
%
To construct the surrogate model, this framework uses data-driven 
\black{\ac{amdP}} basis 
construction and a sparsity-enhancing rotation procedure which leads 
to more accurate recovery of the sparse representation of the target function. 
%
%
%
%
%
The method benefits from both the orthonormal 
basis expansion and the enhanced sparsity of the expansion coefficients. 
\black{With the assumption that there exists a sparse representation of the surrogate model},
the \ac{DSRAR} approach can be applied to challenging \ac{UQ} problems 
under two widely encountered situations: (\Rmnum{1}) probability measure 
implicitly represented by a large collection of samples and (\Rmnum{2}) 
non-Gaussian probability measures with explicit (analytical) forms.
For systems with explicit knowledge of the probability measure, our 
method exploits sparser representations of \acp{QoI} while retaining 
proper orthogonality with respect to rotated variables.
For systems with randomness implicitly represented by a large collection 
of random samples, we also proposed a heuristic method to construct a 
\emph{near-orthonormal} basis in addition to the exact orthonormal basis 
with respect to the discrete measure. The near-orthonormal basis 
shows {\color{black}a} smaller basis bound and empirically yields more accurate 
representations. 
The numerical examples show the 
effectiveness of our method for realistic problems on \black{quantifying 
uncertainty propagation in molecular system under 
conformational fluctuations} as well 
as \acp{PDE} with arbitrary underlying probability measures.

For future study, we note that several issues not considered in the 
present work could further improve the performance of the present \ac{DSRAR} framework.
The heuristic approach to constructing near-orthonormal basis introduced in 
this study yields smaller basis bounds and more accurate representations than 
existing methods. However, we do not have the theoretical analysis to formally 
show that the near-orthonormal basis is optimal and to establish the conditions 
under which it outperforms the exact orthonormal basis. It would be 
interesting to investigate 
different approaches of data-driven basis construction to further improve the 
properties of measurement matrix for \ac{CS} purposes. \textcolor{black}{For instance,
if new data becomes available after the surrogate construction, it is worth
exploring how to use the new information to design more sophisticated 
(cross-validation) strategies to optimize the orthonormal threshold 
values and the basis construction procedure.} 
Furthermore, our study used a standard $\ell_1$ minimization approach 
for relaxing the \ac{CS} 
problem and recovering a sparse solution of the under-determined system.
However, other optimization approaches can be employed when the measurement 
matrix is highly coherent when $\ell_1$ minimization is not necessarily optimal.
Finally, it would be interesting to employ the developed \ac{DSRAR} approach for
\ac{UQ} study in other complex biological systems \cite{Bajaj_ACM_2016, Bajaj_JCB_2018}.
Such results will be presented in a future publication.

\appendix
\section{The proof of Theorem ~\ref{thm:exact_recovery}} \label{app:exact_recovery}

\begin{proof}
  Let $\bm v \in \rm{Ker}~\bm A$ and $\bm x \neq \bm c$ another solution of $\bm A \bm x = \bm b$. 
  To show that $\bm c$ is the unique $l_1$ minimizer of $\bm A \bm c  = \bm b$, it is sufficient if 
  \begin{equation}
    \Vert \bm v_{T_{\ba}}\Vert_1 < \Vert \bm v_{T_{\ba}^c}\Vert_1,
    \label{eq:null_condition}
  \end{equation}
  which gives
  \begin{equation}
    \begin{split}
      \Vert \bm c \Vert_1 &\le \Vert \bm c - \bm x_{T_{\ba}}\Vert_1 + \Vert  \bm x_{T_{\ba}}\Vert_1
      = \Vert \bm c_{T_{\ba}} - \bm x_{T_{\ba}}\Vert_1 + \Vert  \bm x_{T_{\ba}}\Vert_1
      = \Vert \bm v_{T_{\ba}}\Vert_1 + \Vert  \bm x_{T_{\ba}}\Vert_1  \\
      &< \Vert \bm v_{T_{\ba}^c}\Vert_1 + \Vert  \bm x_{T_{\ba}}\Vert_1 = \Vert \bm x \Vert_1.
    \end{split} 
  \end{equation}
  To satisfy \eqref{eq:null_condition}, we partition $T_{\ba}^c$ into $T_{\ba}^c =  T_{\ba,1}^c \bigcup  T_{\ba,2}^c \bigcup \cdots$, where $T_{\ba,1}^c$ is the index set of $s$ largest absolute entries of $\bm v$ in $T_{\ba}^c$, $T_{\ba,2 }^c$ is the index set of $s$ largest absolute entries of $\bm v$ in $T_{\ba}^c T_{\ba,1}^c$.
  Accordingly, 
  \begin{equation}
    \Vert \bm v_{T_{\ba}}\Vert_2^2 \le \frac{1}{1-\delta_s} \Vert \bm A \bm v_{T_{\ba}}\Vert_2 
    = \frac{1}{1-\delta_s} \sum_{k=1} \left\langle \bm A \bm v_{T_{\ba}}, \bm A (-\bm v_{T_{\ba,k}^c}) \right\rangle \le \frac{\theta_s}{1-\delta_s} \sum_{k=1} \Vert \bm v_{T_{\ba}}\Vert_2 \Vert \bm v_{T_{\ba,k}^c}\Vert_2,
  \end{equation}
  which gives $\Vert \bm v_{T_{\ba}}\Vert_2 \le \frac{\theta_s}{1-\delta_s} \sum_{k=1}  \Vert \bm v_{T_{\ba,k}^c}\Vert_2$.
  The remaining of the proof is straightforward and follows {\rm Theorem 2.6} of Rauhut~\cite{Rauhut_2010CsSM}.
  By the Cauchy-Schwarz inequality, we obtain 
  \begin{equation}
    \Vert \bm v_{T_{\ba}}\Vert_1 \le  \frac{\theta_s}{1-\delta_s} \left( \Vert \bm v_{T_{\ba}}\Vert_1 + \Vert \bm v_{T_{\ba}^c}\Vert_1 \right).
  \end{equation}
  Equation~\eqref{eq:null_condition} follows if $\frac{\theta_s}{1-\delta_s} < 0.5$. 
\end{proof}

\begin{rem}
  We emphasize that Theorem~\ref{thm:exact_recovery} holds only for the given index set $T_{\ba}$; it provides a metric to examine the recovery accuracy with respect to measurement matrix $\bm A$ and should not be viewed as the sufficient condition for exact recovery of \emph{arbitrary $s$-sparse vector} via $l_1$-minimization (see canonical references \cite{Candes_2005Decoding, Candes_2006Stablesrec,Rauhut_2010CsSM} for details). 
  Theorem~\ref{thm:exact_recovery} also indicates that, for the given index set $T_{\ba}$, small $\|\bm{A}^*_{T_{\ba}}\bm{A}_{T_{\ba}}-I\|_2$ will promote the recover of $\bm v_{T_{\ba}}$.
\end{rem}

\section{Measurement matrix and basis bounds}\label{app:basis_bound}
\subsection{Null space of measurement matrix from Section \ref{sec:basis_comparison}}
Let $\tilde{\bm c} = \bm c + \bm v$, $\bm v \in \textrm{Ker}~\bm{A}$ where $\bm{A}$ is the measurement matrix defined in \eqref{eq:matA}. 
From the null space property \cite{Rauhut_2010CsSM}, $\tilde{\bm c}$ does not fully recover $\bm c$ by 
$\ell_1$ minimization (i.e., equation \eqref{eq:ape_L1}) only if $\Vert  \tilde{\bm c} \Vert_1 < \Vert \bm c\Vert_1$. As a \emph{necessary 
condition} for the failure of recovery, it requires 
\begin{equation}
\left \Vert \bm v_{T_{\ba}}\right \Vert_1 > \left \Vert \bm v_{T_{\ba}^c}\right\Vert_1,
\label{eq:l1_failure}
\end{equation}  
where $T_{\ba}^c$ refers to the complement of $T_{\ba}$. Accordingly, different null space of measurement matrix $\bm{A}$ 
generally leads to different recovery error.

We examined the above necessary condition \eqref{eq:l1_failure} for different measurement matrices by randomly choosing a non-zero index set 
$T_{\ba}$ with $\left\vert T_{\ba}\right\vert = 50$ and $M = 180$.
For $\bm A$ constructed by both basis sets, we collected $1000$ normalized  $\bm v\in\rm{Ker}~\bm A $ that satisfy $\left \Vert \bm v_{T_{\ba}}\right \Vert_1 > \left \Vert \bm v_{T_{\ba}^c}\right\Vert_1$.
Figure~\ref{fig:contour_null_vector} shows the density contour of individual component $\left\vert \bm v_{i'}\right\vert$ in log-scale, where $i'$ refers to the index sorted by magnitude in descending order.
\begin{figure}[tbp]
  \center
  \subfigure[]{\includegraphics*[scale=0.25]{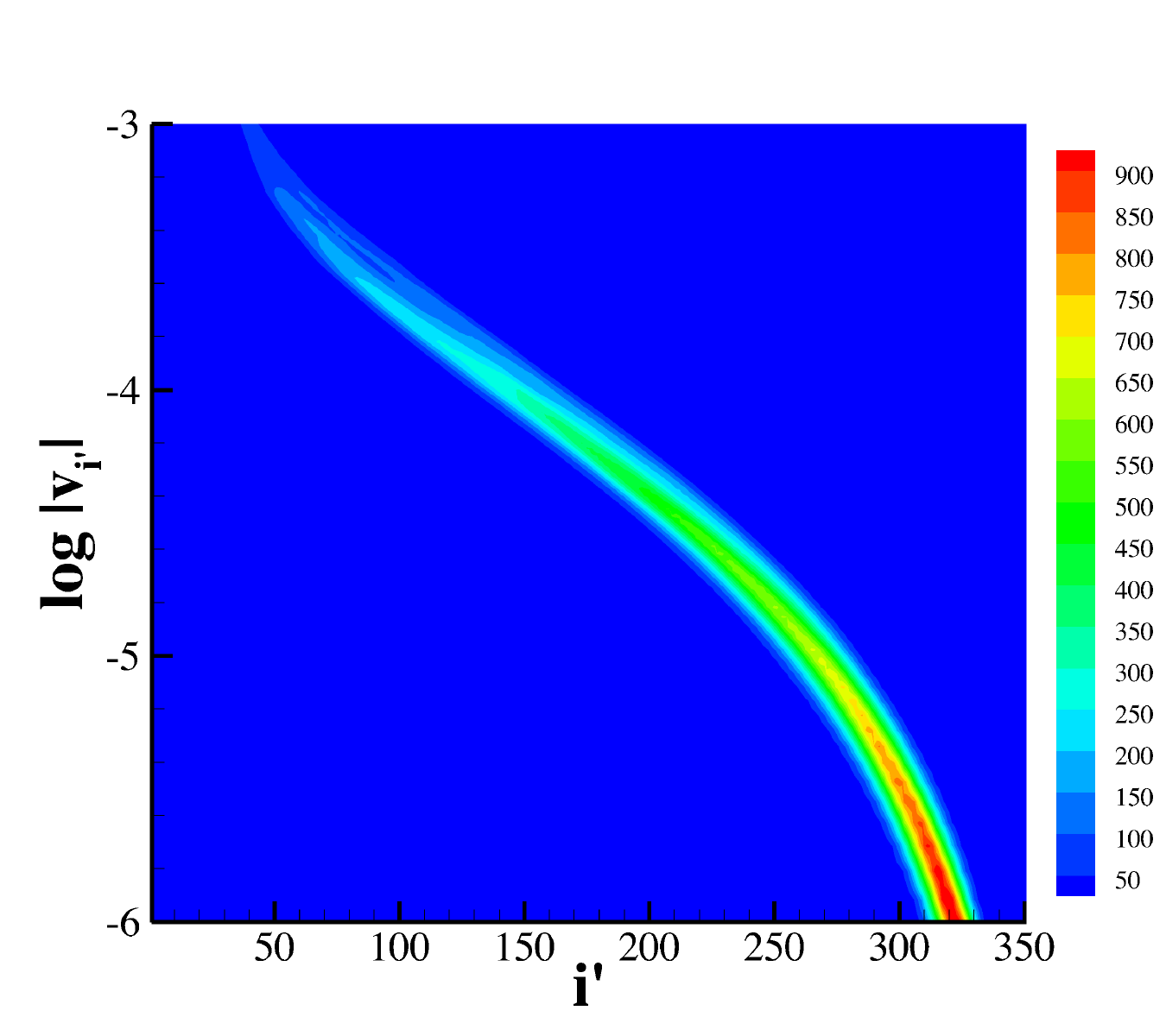}}
  \subfigure[]{\includegraphics*[scale=0.25]{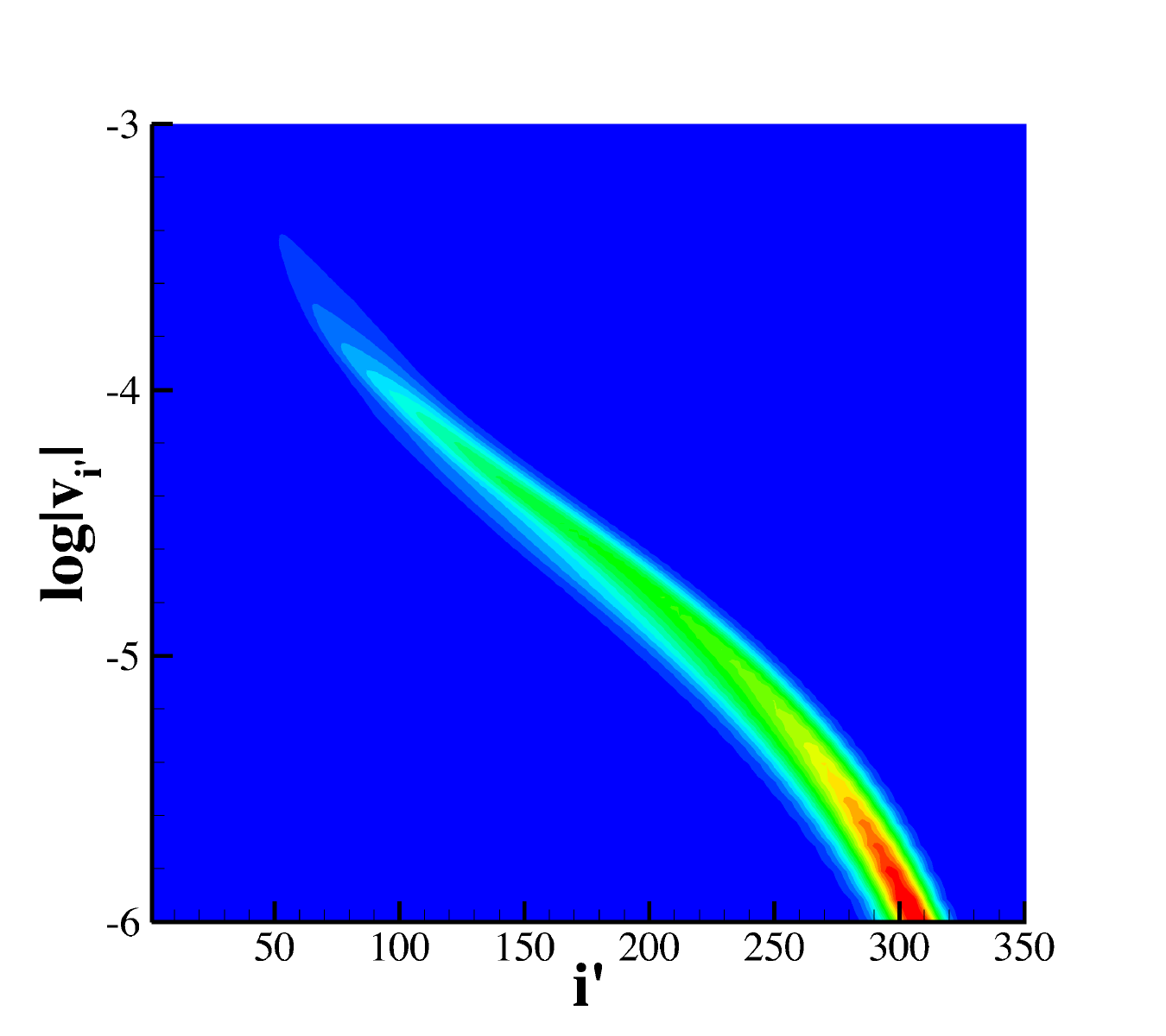}}
  \caption{{The null spaces of measurement matrices constructed by the exact and near-orthonormal bases are different under $\left \Vert \bm v_{T_{\ba}}\right \Vert_1 > \left \Vert \bm v_{T_{\ba}^c}\right\Vert_1$---a necessary
    condition for $\bm c$ not being recoverable exactly.}
  Density contour of the normalized null space vector component $\log\vert \rm {v}_{i'}\vert $ (sorted by magnitude) of the measurement matrix $\bm A$ constructed by orthogonal (a) and near-orthogonal basis functions (b) that satisfy $\left \Vert \bm v_{T_{\ba}}\right \Vert_1 > \left \Vert \bm v_{T_{\ba}^c}\right\Vert_1$ and $\Vert \bm v\Vert_2 = 1$.} \label{fig:contour_null_vector}
\end{figure}
The two basis sets demonstrate different distributions of $\log\left\vert \bm v_{i'}\right\vert$, which likely contribute 
to the different recovery errors shown in Figure~\ref{fig:err_no_sparse_vector}.

\subsection{Basis bounds}
The lower bound of the required number of samples $M$ given in Theorem \ \ref{thm:boundedBOS} suggests that bases with smaller basis bounds $K$ are preferred.
We expect that smaller basis bounds will correlate with higher accuracy representations.
For the constructed basis set $\psi_i(\bx), i = 1,\cdots, N$, we define the basis 
bound $\tilde{K}$ on the given data set $S$ by
\begin{equation}\label{eq:basis_bound}
\tilde{K} := \frac{1}{\vert S_{M_{\sigma}}\vert} \sum_{\bx\in S_{M_{\sigma}}}\vert k(\bm\xi)\vert,
\end{equation}
where the set $S_{M_{\sigma}}$ is defined by $S_{M_{\sigma}} = \left
\{\bx\Big\vert \vert k(\bx) - \mathbb{E}\left[k\right]\vert > M_{\sigma} \sigma\left[k\right], \bx \in S\right\}$. 
Here $\displaystyle k(\bx) :=\mathop{\max}_{i} \vert \psi_i(\bx) \vert$ denotes the maximum magnitude for 
an individual sampling point $\bx$, $\mathbb{E}\left[k\right]$ and $\sigma\left[k\right]$  represent the 
mean and the standard deviation of $k(\bx)$ on $S$ with respect to the discrete measure $\nu_S$.
In this study, we present $\tilde{K}$ as an indication of the difference between the exact and near-orthonormal basis function. 
In compressive sensing, the measurement matrix only consists of limited number of samples. Therefore, we 
employ the mean of the tails in the basis bounds as an indicator of the upper bound of the largest 
entry values from the measurement matrix. $M_{\sigma}$ defines the range of this tail set. 
We choose $M_{\sigma} = 5$ if not specified otherwise.

\begin{table}[!h]
\centering
\caption{$\tilde{K}$ of constructed basis set for Gaussian mixture system $d = 25$, $p = 2$ and $N_s = 1\times 10^5$.}
\begin{tabular}{C{8em}|C{6em} C{6em} C{6em} C{6em} C{6em}}
\hline\hline
$M_{\sigma}$ & $3$ & $4$ & $5$ & $6$ &$ \displaystyle \mathop{\max}_{\bx\in S} k(\bx)$ \\
\hline
$\tilde{K}_{\rm orth}$ & 10.359 & 12.048 & 13.895 & 15.513 & 22.208\\ 
$\tilde{K}_{\rm near-orth}$ & 9.622 & 11.196 & 12.867 & 14.448 & 18.790\\ 
\hline\hline
\end{tabular}
\label{tab:GM_d_25_p_2}
\end{table}

Following the definition by Equation \eqref{eq:basis_bound}, we examine the basis bound $\tilde{K}$ of the numerical
examples presented in this study. Table \ref{tab:GM_d_25_p_2} shows the results of Gaussian mixture system  
$\left\{\bx^{(i)}\right\}, i = 1, \cdots, N_s$ with $N_s = 1\times 10^5$, $d = 25$ and $p = 2$ which is defined in Section \ref{sec:basis_comparison}. For different
values of $M_{\sigma}$, $\tilde{K}$ of the near orthogonal basis shows consistently smaller values than
the values of the exact orthogonal basis set. 

Table \ref{tab:GM_d_25_p_3} shows the basis bound $\tilde{K}$ of the Gaussian mixture system which is studied 
in Section \ref{sec:high_d_poly} with $N_s = 2\times 10^5$, 
$d = 25$ and $p = 3$ . The values of $\tilde{K}$ for the near orthogonal basis are consistently smaller than 
the value for the exact orthogonal basis set no matter on the original random sample set or the rotated sample set. 
Furthermore, we present the basis bounds on the rotated sampling set $\left\{\bm\chi_M^{\left(i\right)}\right\}_{i=1}^{N_s}$, where
the subscript ``$M$'' refers to the different number of training points utilized to construct the surrogate model
$X(\bx)$. The near-orthogonal basis yields smaller $\tilde{K}$ than the exact orthogonal basis in each case. 

Similarly, Table \ref{tab:BioMol_d_12_p_4} shows $\tilde{K}$ 
of the constructed basis for uncertainty quantification of the molecular solvation energy 
($d = 12$, $p=4$ and $N_s = 2\times 10^5$), which is studied in Section \ref{sec:mole_example}. The near-orthogonal basis yields smaller values consistently for different number ($\bm{\chi}_M$) of training points. 

\begin{table}[!h]
\centering
\caption{$\tilde{K}$ of constructed basis set for Gaussian mixture system $d = 25$, $p = 3$ and $N_s = 2\times 10^5$.}
\begin{tabular}{C{8em}|C{6em} C{6em} C{6em} C{6em} C{6em}}
\hline\hline
 & $\bx$ & $\bm{\chi}_{M=400}$ & $\bm{\chi}_{M=1200}$ & $\bm{\chi}_{M=1600}$ & $\bm{\chi}_{M=2400}$ \\
\hline
$\tilde{K}_{\rm orth}$ & 32.497 & 32.522 & 32.079 & 33.142 & 32.308 \\ 
$\tilde{K}_{\rm near-orth}$ & 28.320 & 29.811 & 29.407 & 29.512 & 29.192\\ 
\hline\hline
\end{tabular}
\label{tab:GM_d_25_p_3}
\end{table}

\begin{table}[!h]
\centering
\caption{$\tilde{K}$ of constructed basis set for molecular system $d = 12$, $p = 4$ and $N_s = 2\times 10^5$.}
\begin{tabular}{C{8em}|C{6em} C{6em} C{6em} C{6em} C{6em}}
\hline\hline
 & $\bm{\chi}_{M=80}$ & $\bm{\chi}_{M=160}$ & $\bm{\chi}_{M=240}$ & $\bm{\chi}_{M=320}$ & $\bm{\chi}_{M=400}$  \\
\hline
$\tilde{K}_{\rm orth}$ & 40.596 & 39.914 & 39.789 & 39.218 & 39.142 \\ 
$\tilde{K}_{\rm near-orth}$ & 39.970 & 39.278 & 39.290 & 38.528 & 38.631\\ 
\hline\hline
\end{tabular}
\label{tab:BioMol_d_12_p_4}
\end{table}

\section{Other metrics of surrogate model}
\black{
Besides the relative $l_2$ error, we have also computed the predictivity coefficients $Q_2$ for 
the test cases of Gaussian Mixture systems (with $d = 25$ and
$p = 3$) and the molecular systems. Similar to Ref. \cite{Marrel_Q2_2009}, $Q_2$ is defined by
\begin{equation}
Q_2 = 1 - \int(f(\bx) - \tilde{f}(\bx))^2 \dif \nu_{S_2}(\bx) \big/ \int \left(f(\bx) - \bar{f}\right)^2 \dif \nu_{S_2}(\bx) ,
\end{equation}
where $\bar{f}$ represents the mean of QoI on $S_2$. The results are listed in Tab. \ref{tab:Q_2_GM_mol}, where the surrogate models are
constructed by the present data-driven basis approach. }

\black{
\begin{table}[!h]
\centering
\caption{The predictivity coefficient $Q_2$ for polynomial function with 
  Gaussian Mixture measure ($d = 25$ and $p = 3$) and the molecular system for
    solvation energy and SASA of atom $\rm{H9}$.}
\begin{tabular}{C{8em} C{3em} C{4em} C{4em} C{4em} C{4em} C{4em}}
\hline\hline
molecule solvation & $M$ & 80 & 160 &  240 & 320 & 400 \\
 & $Q_2$ & 0.995715 & 0.999132 &  0.999731 & 0.999864 & 0.999911 \\
\hline
molecule SASA & $M$ & 200 & 300 &  400 & 500 & 600 \\
 & $Q_2$ & 0.988675 & 0.996069 &  0.998272 & 0.998709 & 0.999027 \\
\hline
Gaussian Mixture & $M$ & 200 & 300 &  400 & 500 & 600 \\
 & $Q_2$ & 0.998372 & 0.999347 &  0.999844 & 0.999892 & 0.999941 \\
\hline\hline
\end{tabular}
\label{tab:Q_2_GM_mol}
\end{table}
}
\black{
With the constructed surrogate model, we can further compute the Sobol' sensitivity indices for 
\ac{QoI} with dependent random variables. In brief, $f(\bx)$ is expanded by
\begin{equation}
f(\bx) = \eta_0(\bx) + \sum_{\bm\beta \in \Theta^d} \eta_{\bm\beta}(\bx),
\end{equation}
where $\Theta^d$ represents the collection of all subsets of $[1~:~d]$ and $\eta_{\bm\beta}(\bx)$
satisfies 
$\mathbb{E}\left[\eta_{\bm\alpha}, \eta_{\bm\beta}\right] = 0$, if $\bm\alpha \subset \bm\beta$.
The sensitivity index $S_{\bm\beta}$ is given by
\begin{equation}
S_{\bm\beta} = \frac{\mathbb{V}(\eta_{\bm\beta}) + \sum_{\ba \cap \bm\beta \neq \ba, \bm\beta} \rm{Cov}(\eta_{\ba}, \beta_{\bm\beta})}
    {\mathbb{V}(f)}
\end{equation}
where $\mathbb{V}(\cdot)$ refers to the variance on $\nu_S$. 
We refer to Ref. \cite{Chastaing_sobol_2014} for the details. Fig. \ref{tab:Q_2_GM_mol} shows the first 
order sensitivity indices for the test cases of Gaussian Mixture systems 
($d = 25$, $p = 3$) and the biomolecular systems, where the surrogate models are
constructed by the present data-driven basis approach using $M = 800$, $M = 240$ and $M = 600$ 
training points, respectively. Based on the analysis, it is shown the dominant components are on the 
dimensions $(1, 2, 3, 6, 11, 13, 14, 15, 16, 20, 22, 24, 25)$, $(1, 2, 5)$ and $(1, 2, 4, 5, 7)$ ($90\%$ of 
total variance).
}
\begin{figure}[tbp]
  \center
  \subfigure[]{\includegraphics*[scale=0.21]{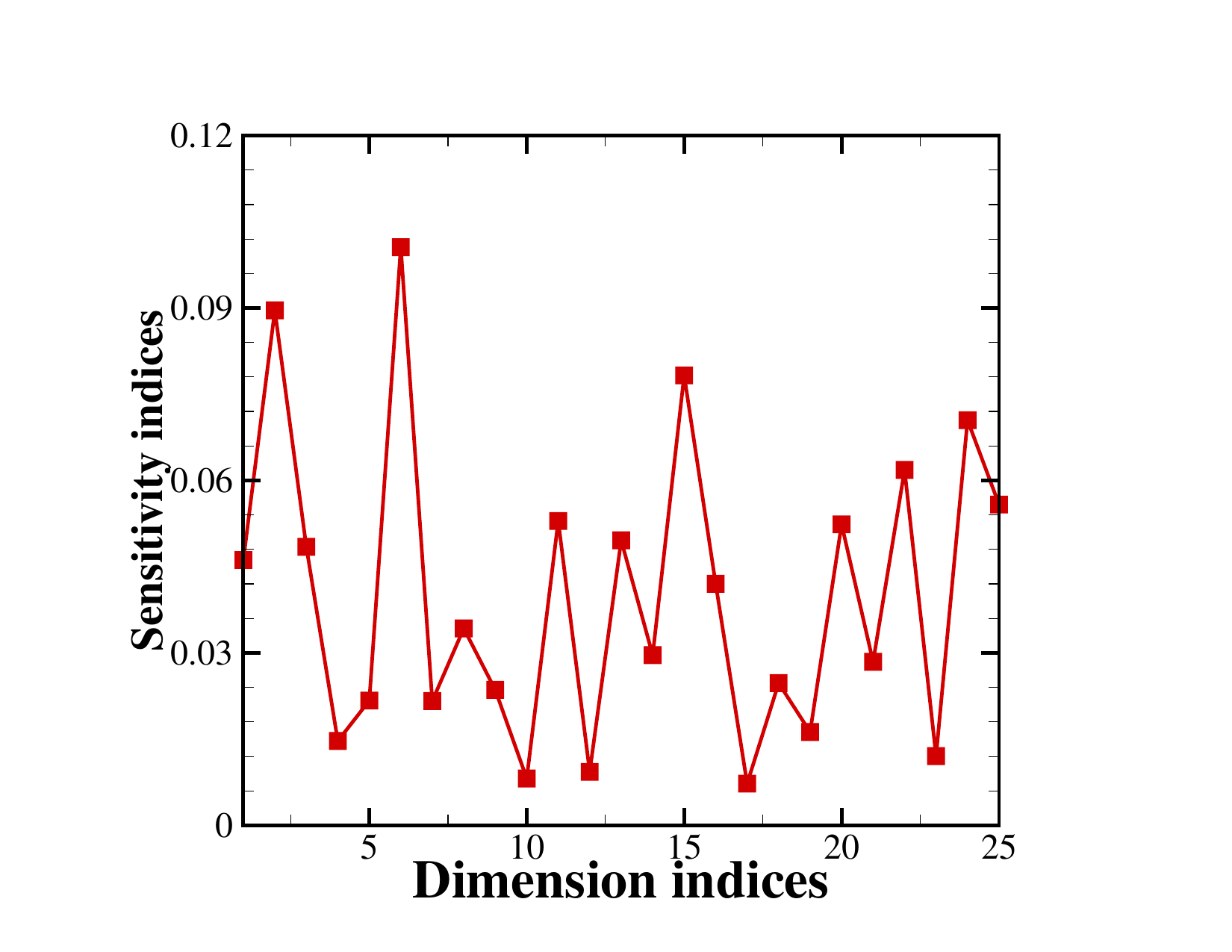}}
  \subfigure[]{\includegraphics*[scale=0.21]{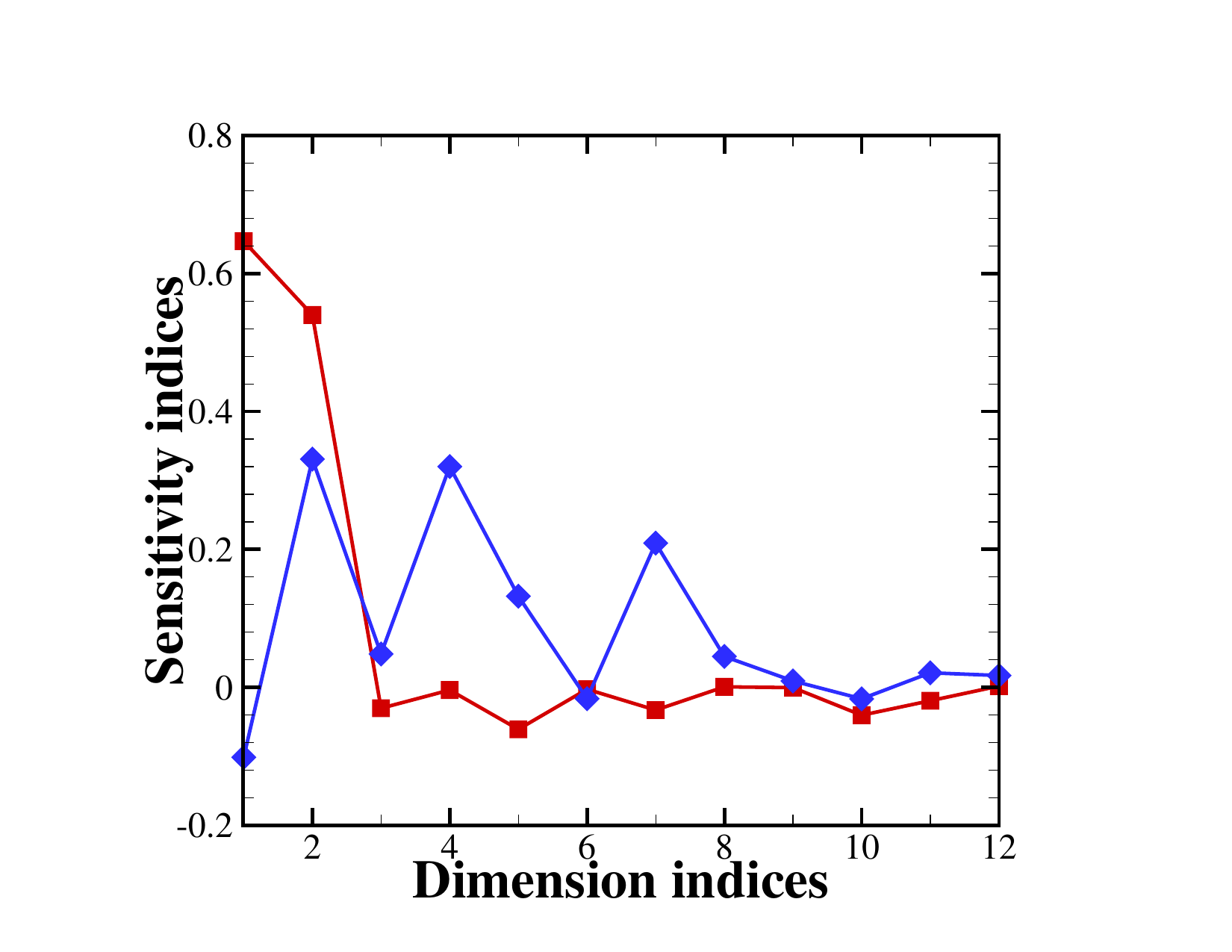}}
  \caption{The first-order Sobol' sensitivity indices for (a) polynomial function with 
  Gaussian Mixture mesure ($d = 25$, $p = 3$) (b) molecular system for
    solvation energy (``\textcolor{red}{\protect\rectanglesolidline}'')
    and SASA of atom $\rm{H9}$(``\textcolor{blue}{\protect\diamonddashdotline}'').
   } \label{fig:sobol_GM_mol}
\end{figure}

\section{Generation of Gaussian Mixture data set}
\label{app:GM_generation}
\black{
We employ Matlab to generate the Gaussian Mixture data set in Sec. \ref{sec:basis_comparison}
by calling the function gmdistribution($\bm\mu$, $\left\{\Sigma_i\right\}_{i=1}^3$, $\bm a$). Here
$\bm a = (0.5358, 0.1281, 0.3361)$. $\bm\mu$ is a $25\times 3$ random 
matrix with i.i.d.\ entries on $U[-2.5, 2.5]$. $\left\{\Sigma_i\right\}_{i=1}^3$ is a 
$25\times25\times3$ array where $\Sigma_i$ is defined by
\begin{equation}
  \bm\Sigma_i = (\bm \Upsilon_i {\bm \Upsilon_i}^T + \mathbf{I})/4,
\end{equation}
where $\bm\Upsilon_i$ is a random matrix with i.i.d.\ entries from $\mathcal{U}[0,1]$ for $i=1,2,3$.
$\bm\mu$ and $\Upsilon_i$ are generated by calling Matlab function rand() consequently with
random number seed $200$.
}

\section{Molecular Dynamics simulation and calculation details}
\label{app:sim}
We performed all-atom MD simulation of benzyl bromide in water using GROMACS 5.1.2 \cite{GROMACS}.
The simulation system included a benzyl bromide molecule (see Figure \ref{fig:mol_blb} for the molecule structure) 
 and 1011 water molecules.
The \ac{GAFF} \cite{RN1} was used for benzyl bromide parameters.
The partial charges of benzyl bromide molecule were calculated by RESP method \cite{RN2}.
Bond lengths of benzyl bromide were constrained using the LINCS algorithm \cite{RN3}.
The water molecule was modeled with the rigid TIP3P water model \cite{RN4}.
The bond lengths and angles were held constant through the SETTLE algorithm \cite{RN5}.
The system was equilibrated in the isothermal-isobaric ensemble for 10 ns at 300K and 1 bar after energy minimization.
The van der Waals cut-off radii was 1.0 nm.
Long-range electrostatics were calculated using a Particle Mesh Ewald (PME) summation with grid spacing of 0.12 nm.
The time step was 2 fs.
Isobaric-isothermal simulations were equilibrated using a V-rescale thermostat and Berendsen barostat.
Following equilibration, the simulation was run for a production period of 20 $\mu$s in a NVT ensemble with a Nos\'e-Hoover thermostat.
The trajectory was stored every 10000 time steps.

\begin{figure}[htbp]
\center
\includegraphics*[scale=0.6]{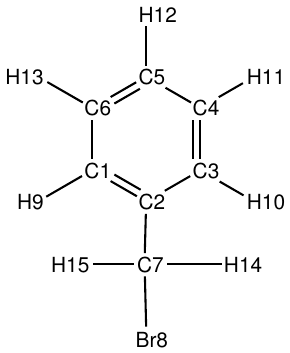}
\caption{Sketch of the molecule benzyl bromide with labeled atoms.}
\label{fig:mol_blb}
\end{figure}

APBS calculations \cite{BakerSJHM01, APBS_2018} were performed with $129^3$ grid points over a $40 \times 40 \times 40$ \AA$^3$ coarse grid domain with focusing to a $14 \times 14 \times 14$ \AA$^3$ fine grid domain with the grid origin located at the geometric center of the molecule.
The Poisson equation was solved with Dirichlet boundary conditions based on the asymptotic behavior of multiple point charges in a homogeneous dielectric medium.
The dielectric coefficient inside the domain used a van der Waals molecular volume definition with a dielectric value of 2.0 inside the molecule and 78.0 outside the molecule.
Charges were modeled by Dirac delta functions but discretized to the finite difference grid points using a cubic spline approximation.

\begin{acknowledgments}
We thank 
Ling Guo (Shanghai Normal University), Lei Wu (Princeton University), Wen Zhou (Colorado State University), and David Sept 
(University of Michigan, ORCID:0000-0003-3719-2483) for helpful discussions.
This work was supported by the U.S.\ Department of Energy, Office of Science, Office of Advanced Scientific Computing Research as part of the Collaboratory on Mathematics for Mesoscopic Modeling of Materials (CM4)
and by the National Institutes of Health grant R01 GM069702. The research was performed using resources available through Research Computing at Pacific Northwest National Laboratory.
HL acknowledges grant support from AMS Simons Post-doctoral Travel Grant and PNNL Laboratory Directed Research \& Development (LDRD) under project
``Development of physics-compatible stochastic models for multiphysics systems''.
\end{acknowledgments}

\bibliographystyle{unsrt}
\bibliography{ab_uq,physics_system}
\end{document}